\newtheorem{theorem}{Theorem}
\newtheorem{corollary}{Corollary}
\newtheorem{lemma}{Lemma}
\theoremstyle{remark}
\newtheorem{remark}{Remark}
\theoremstyle{remark}
\theoremstyle{definition}
\newtheorem{definition}{Definition}
\title{Existence and stability of measure solutions for BSDE with generators of quadratic growth}
\author{Alexander Fromm, Peter Imkeller, Jianing Zhang \vspace{3mm} \\ Institut f\"ur Mathematik\\
Humboldt-Universit\"at zu Berlin\\
Unter den Linden 6\\
10099 Berlin\\ Germany}
\date{April 28, 2011}
\begin{document}

\maketitle

\begin{abstract}
\noindent With an emphasis on generators with quadratic growth in the
control variable we consider measure solutions of BSDE, a solution concept
corresponding to the notion of risk neutral measure in mathematical
finance. In terms of measure solutions, solving a BSDE reduces to
martingale representation with respect to an underlying filtration. Measure
solutions related to measures equivalent to the historical one provide
classical solutions. We derive the existence of measure solutions in
scenarios in which the generating functions are just continuous, of at most
linear growth in the control variable (corresponding to generators of at
most quadratic growth in the usual sense), and with a random bound in the
time parameter whose stochastic integral is a BMO martingale. Our main
tools include a stability property of sequences of measure solutions, for
which a limiting solution is obtained by means of the weak convergence of
measures.
\end{abstract}\vspace{2mm}
\emph{Mathematics Subject Classification 2010:} Primary 60 H 30; secondary: 58 E 25, 60 G 44, 60 G 48, 60 H 07, 60 H 20, 60 H 99, 93 E 03, 93 E 20. \\
\emph{Key words and phrases:} backward stochastic differential equation; BSDE; generator of quadratic growth; measure solution; martingale measure, risk neutral measure.

\section*{Introduction}

The most efficient formulation of pricing and hedging contingent claims on
complete financial markets is given by the elegant notion of risk neutral
or martingale measures. From its perspective, pricing amounts to taking
expectations, while hedging boils down to pure conditioning and using
martingale representation.\par\smallskip

From the perspective of stochastic control theory, hedging consists in
choosing appropriate strategies to steer a portfolio into a terminal random
endowment the portfolio holder has to ensure. Backward stochastic
differential equations (BSDE) are tailor-made for this purpose. On a
Brownian basis, a BSDE with terminal variable $\xi$ at time horizon $T$ and
generator $f$ is solved by a pair of processes $(Y,Z)$ on the interval
$[0,T]$ satisfying \begin{equation}\label{meassol-eq:introduction} Y_t = \xi -
\int_t^T Z_s \mathrm{d}W_s + \int_t^T f(s, Y_s, Z_s) \mathrm{d}s, \quad
t\in[0,T].\end{equation} In case $f=0$, the solution just requires an
application of the martingale representation theorem, and $Z$ will be given
as the stochastic integrand therein. Generators $f$ in BSDE derived from
many problems of utility maximization or risk minimization turn out to be
quadratic in the control variable $z$, and have been treated in a large
number of papers starting with the pioneering one by Kobylanski
\cite{meassol-koby}.
\par\medskip

In \cite{meassol-meas} , the notion of \emph{measure solutions} for BSDE has been
introduced with the aim to extend the passage from the historical to the
risk neutral world to a more general framework. In analogy with martingale
measures in hedging which eliminate drifts in the underlying market
dynamics, these solutions of BSDE are given by probability measures under
which their generators are seen as vanishing. Determining a measure
$\mathbb{Q}$ under which the generator vanishes amounts to performing a
Girsanov change of probability that eliminates it. We therefore have to
look at the BSDE in the form
\begin{equation}\label{meassol-eq:introduction2} Y_t = \xi - \int_t^T Z_s \left[ \mathrm{d}W_s -
g(s, Y_s, Z_s) \mathrm{d}s \right],\quad t\in[0,T],\end{equation} where
$\displaystyle z\cdot g(s,y,z) = f(s,y,z)$, and study the measure
$$\mathbb{Q} = \exp \left( M_T - \frac{1}{2} \langle M\rangle_T \right) \cdot \mathbb{P}$$
for the martingale $M = \int_0^\cdot g(s, Y_s, Z_s) \mathrm{d}W_s.$ Supposing that
such a measure $\mathbb{Q}$ is equivalent to the historical measure
$\mathbb{P}$, the classical solution pair $(Y,Z)$ results from projection
and representation respectively, i.e.
\begin{equation}\label{meassol-projectionrepresentation}Y = \mathbb{E}^{\mathbb{Q}}
[\xi|\mathcal{F}_\cdot] = Y_0 + \int_0^\cdot Z_s \mathrm{d}
\tilde{W}_s\end{equation} where $\tilde{W}$ is a Wiener process under
$\mathbb{Q}$. It is known from \cite{meassol-meas} that basically all classical
solutions can be interpreted as measure solutions.\par\medskip

In this paper we view measure solutions still more generally as probability
measures $\mathbb{Q}$ related to terminal variables $\xi$ and
\emph{generating functions} $g$ such that the operation of projection and
representation providing the pair of processes $(Y,Z)$ according to
(\ref{meassol-projectionrepresentation}) leads to an interpretation of the
exponential martingale density $\zeta$ in
$$\frac{\mathrm{d} \mathbb{Q}}{\mathrm{d} \mathbb{P}} = \exp\left(\int_0^T \zeta_s \mathrm{d}W_s -
\frac{1}{2} \int_0^T \zeta_s^2 \mathrm{d}s \right)$$ by
$$\zeta = g(\cdot,Y,Z).$$
Obviously, in case $\mathbb{Q}\sim\mathbb{P}$ this notion allows to
identify $(Y,Z)$ as the classical solution of (\ref{meassol-eq:introduction})
related to the \emph{generator} $f(\cdot,z) = z\cdot g(\cdot,z)$, since
$$Y = Y_0 + \int_0^\cdot Z_s [\mathrm{d}W_s - \zeta_s \mathrm{d}s] = Y_0 + \int_0^\cdot Z_s
\mathrm{d}W_s - \int_0^\cdot f(s,Y_s,Z_s) \mathrm{d}s.$$ Note that a generator $f$ which
is quadratic in $z$ corresponds to a generating function $g$ which is of at
most linear growth in $z$. Our main aim is to provide a result on the
existence of a measure solution (and thus a classical one) in a scenario in
which the terminal variable is bounded, the generating function $g$ is just
continuous off the hyperplane $z=0$ and fulfills the rather general
boundedness hypothesis $|g(s,\cdot,z)| \le C (|z| + \phi_s),$ where the
stochastic integral of $\phi$ with respect to $W$ is a BMO martingale. Note
that in this scenario the bound on $g$ may be random, a detail which turned
out to be of considerable practical relevance for example in \cite{meassol-diff}
and \cite{meassol-cross}. The main tool we develop in order to reach this goal
consists in a stability property for measure solutions. Given a sequence of
measure solutions $(\mathbb{Q}_n)_{n\in\mathbb{N}}$ associated with
terminal conditions and generating functions given by $(\xi_n,
g_n)_{n\in\mathbb{N}}$, we formulate sufficient conditions under which a
limiting measure solution can be found, as a weak limit of the sequence
$(\mathbb{Q}_n)_{n\in\mathbb{N}}$. Given a generating function in the
situation of our main existence theorem, the sequence of approximating
measure solutions is constructed along smoothed approximations of the
generating function, obtained by a new technique based on comparison
properties of classical minimal solutions. Owing to the boundedness
conditions valid for the generating function $g$, BMO martingale techniques
play an important role in our reasoning.\par\medskip

The paper is organized as follows. In section \ref{meassol-prelim}, we explain the
notion of measure solution and a slight modification of it, the
\emph{almost-measure solution}. In section \ref{meassol-stabres}, our main result
on the stability of measure solutions is established in a technical proof
based on tools related to the weak convergence of probability measures and
the martingale representation of their Radon-Nikodym densities (Theorems
\ref{meassol-result} and \ref{meassol-result'}). This result is combined with comparison
related tools in section \ref{meassol-existres} to prove our main statement on the
existence of measure and classical solutions of BSDE (Theorem
\ref{meassol-resunb}). In an appendix we collect some (extensions of) well known
results about BMO martingales, martingale representation and duality in
normed spaces.

\section{Preliminaries} \label{meassol-prelim}

Let $(\Omega,\mathcal{F}_T,\mathbb{P},(\mathcal{F}_t)_{t\in[0,T]})$ be a filtered probability space, such that the filtration satisfies the usual hypotheses. Assume furthermore there exists a $d$-dimensional Brownian motion $W$ on $[0,T]$, which is progressive with respect to $(\mathcal{F}_t)_{t\in[0,T]}$ and such that $\mathcal{F}_t=\mathcal{F}^W_t$, the natural filtration generated by $W$ (and augmented by the null sets).

Define for $q \geq 1$ and any probability measure $\mathbb{Q}$ the set $\mathcal{H}^q(\mathbb{R}^m,\mathbb{Q})$ as the space of all progressive processes $(Z_t)_{t\in[0,T]}$ with values in $\mathbb{R}^m$ normed by $\|Z\|_{\mathcal{H}^q}:=\mathbb{E}_\mathbb{Q}\left[\left(\int_0^T|Z_s|^2\,\mathrm{d} s\right)^\frac{q}{2}\right]^\frac{1}{q}<\infty$.

Let $\mathbb{Q}\sim\mathbb{P}$ be a probability measure. Define
$R_T:=\frac{\mathrm{d}\mathbb{Q}}{\mathrm{d}\mathbb{P}}$ as the Radon-Nikodym
derivative. Then the martingale $R:=\mathbb{E}[R_T|\mathcal{F}_\cdot]$ can be
written as
$R=\exp\left(\int_0^\cdot\zeta_s\,\mathrm{d} W_s-\frac{1}{2}\int_0^\cdot|\zeta_s|^2\,\mathrm{d} s\right)$\footnote{$\zeta_s \,\mathrm{d} W_s=\zeta_s\cdot\,\mathrm{d} W_s$ stands for the scalar
product between the vectors $\zeta_s$ and $\,\mathrm{d} W_s$. It is used as an
abbreviation for the notation $\zeta_s^\top \,\mathrm{d} W_s$.}
with some progressively measurable process $\zeta$ such that $\int_0^T|\zeta_s|^2\,\mathrm{d}s<\infty$ a.s. (Lemma \ref{meassol-density}).\\
Define
\begin{equation}\label{meassol-wq} W^{\mathbb{Q}}:=W-\int_0^\cdot\zeta_s\,\mathrm{d}s. \end{equation}
Then $W^{\mathbb{Q}}$ is a Brownian motion with respect to $\mathbb{Q}$ according to Girsanov's Theorem. \\
It is well known (e.g. Lemma 1.6.7 in \cite{meassol-karat}) that $W^{\mathbb{Q}}$ has the representation property in $(\mathcal{F}_t)_{t\in[0,T]}$, i.e. for any real-valued $\mathcal{F}_T$-measurable $\xi$, which is integrable with respect to $\mathbb{Q}$, we have
\begin{equation}\label{meassol-decomp} \xi=\mathbb{E}_\mathbb{Q}[\xi]+\int_0^T Z_s\,\mathrm{d}W^\mathbb{Q}_s \end{equation}
with some progressively measurable process $Z$ such that $\int_0^T |Z_s|^2\,\mathrm{d}s<\infty$ a.s. \\
Define $Y_t:=\mathbb{E}_{\mathbb{Q}}[\xi|\mathcal{F}_t]$ for all $t\in[0,T]$.

\begin{definition} We call a function
$$ f: \Omega\times[0,T]\times\mathbb{R}^{n}\rightarrow\mathbb{R}^{m} $$
\textup{proper} if $f$ restricted to $\Omega\times[0,t]\times\mathbb{R}^{n}$ is
$\mathcal{F}_t\otimes\mathcal{B}([0,t])\otimes\mathcal{B}(\mathbb{R}^{n})$-measurable for all $t\in[0,T]$.
\end{definition}

If $f$ is proper and $X$ is a progressive $\mathbb{R}^{n}$-valued process,
then the process $(\omega,s)\longmapsto f(\omega,s,X_s(\omega))$ is
progressive as well. This is because the mapping
$$\Omega\times [0,t]\rightarrow \Omega\times[0,t]\times\mathbb{R}^{n}$$
$$(\omega,s)\longmapsto (\omega,s,X_s(\omega))$$
is
$\mathcal{F}_t\otimes\mathcal{B}([0,t])-\mathcal{F}_t\otimes\mathcal{B}([0,t])
\otimes\mathcal{B}(\mathbb{R}^{n})$ - measurable for all $t\in[0,T]$. The
following definition presents the principal concept of this paper.

\begin{definition}
For a given probability measure $\mathbb{Q}\sim\mathbb{P}$ and $\xi\in L^1(\mathcal{F}_T,\mathbb{Q})$, let $\zeta$, $Z$ and $Y$ be as above.
Now let $$ g: \Omega\times[0,T]\times\mathbb{R}\times\mathbb{R}^d\rightarrow\mathbb{R}^d$$ be proper.
We say that $\mathbb{Q}$ is a \emph{measure solution} of the BSDE given by $g$ and $\xi$ if
$$\zeta=g(\cdot,Y,Z) \qquad \mathrm{d} \mathbb{P}\otimes\mathrm{d}t \textrm{ - a.e.} $$
\end{definition}
\begin{remark}
Inserting this definition into (\ref{meassol-wq}) and the result into
(\ref{meassol-decomp}) we have
$$ \xi=\mathbb{E}_\mathbb{Q}[\xi]+\int_0^T Z_s\,\mathrm{d} W_s-\int_0^T Z_s\cdot g(s,Y_s,Z_s) \,\mathrm{d} s. $$
And similarly using $Y=\mathbb{E}_\mathbb{Q}[\xi|\mathcal{F}_\cdot]=Y_0+\int_0^\cdot Z_s\,\mathrm{d}W^\mathbb{Q}_s$ we have
$$ Y=Y_0+\int_0^\cdot Z_s\,\mathrm{d} W_s-\int_0^\cdot Z_s\cdot g(s,Y_s,Z_s) \,\mathrm{d} s. $$
Thus $(Y,Z)$ is also a classical solution of the BDSE given by the generator $f$ satisfying
$$f(s,y,z):=z\cdot g(s,y,z),\quad s\in[0,T], z\in\mathbb{R}^d, y\in\mathbb{R},$$ and the terminal condition $Y_T=\xi$. \footnote{$z\cdot g(s,y,z)$ stands for
the scalar product of $z$ and $g(s,y,z)$, which sometimes will be
abbreviated simply by $z g(s,y,z)$.}
\end{remark}

In the following lemma we characterize generators $f$ which can be written
in the form $f(t,y,z)=z\cdot g(t,y,z), t\in[0,T], y\in\mathbb{R},
z\in\mathbb{R}^d,$ with some continuous $g$.

\begin{lemma} Let $f: \mathbb{R}^d\rightarrow\mathbb{R}$ be continuous. There exists some
continuous $g: \mathbb{R}^d\rightarrow\mathbb{R}^d$ such that for $z\in\mathbb{R}^d$ we
have $f(z)=z\cdot g(z)$ if and only if the following two conditions are satisfied:
\begin{itemize}
\item $f(0)=0$,
\item $f$ is differentiable at $0$.
\end{itemize}
\end{lemma}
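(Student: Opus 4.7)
The plan is to prove the two directions separately, both of which are short.

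For the necessity (``only if'') direction, suppose a continuous $g:\mathbb{R}^d\to\mathbb{R}^d$ exists with $f(z)=z\cdot g(z)$. Then $f(0)=0\cdot g(0)=0$ is immediate. To verify differentiability at $0$, I would write
$$f(z)-f(0)=z\cdot g(z)=z\cdot g(0)+z\cdot\bigl(g(z)-g(0)\bigr),$$
and use Cauchy--Schwarz together with the continuity of $g$ at $0$ to conclude that $|z\cdot(g(z)-g(0))|\le |z|\,|g(z)-g(0)|=o(|z|)$. Hence $f$ is differentiable at $0$ with $\nabla f(0)=g(0)$.

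For the sufficiency (``if'') direction, let $a:=\nabla f(0)\in\mathbb{R}^d$ and set $h(z):=f(z)-a\cdot z$. By hypothesis $h$ is continuous, $h(0)=0$, and $h(z)=o(|z|)$ as $z\to 0$. The key construction is to define
$$g(z):=\begin{cases} a+\dfrac{h(z)}{|z|^2}\,z & \text{if } z\neq 0,\\[2mm] a & \text{if } z=0.\end{cases}$$
A direct computation gives $z\cdot g(z)=a\cdot z+h(z)=f(z)$ for $z\neq 0$, while at $z=0$ both sides vanish. Continuity of $g$ on $\mathbb{R}^d\setminus\{0\}$ is inherited from $f$. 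The only delicate point, and the real reason the differentiability hypothesis is needed, is continuity at $0$: one has
$$\bigl|g(z)-a\bigr|=\frac{|h(z)|}{|z|}\longrightarrow 0\quad\text{as } z\to 0,$$
which is precisely the differentiability of $f$ at $0$ with derivative $a$. This is the only non-cosmetic step; once this is observed, the lemma is complete.
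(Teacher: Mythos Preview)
Your proof is correct and follows essentially the same approach as the paper: both directions are argued identically, and your construction $g(z)=a+\frac{h(z)}{|z|^2}z$ is precisely the paper's formula $g(z)=\frac{z}{|z|^2}(f(z)-z\cdot\nabla_z f(0))+\nabla_z f(0)$ rewritten with the abbreviations $a=\nabla f(0)$ and $h(z)=f(z)-a\cdot z$.
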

\begin{proof}
Let $g$ be continuous such that for $z\in\mathbb{R}^d$ we have $f(z)=z\cdot g(z)$. Then $f(0)=0\cdot g(0)=0$. Furthermore
$$ \frac{1}{|z|}|f(z)-f(0)-z\cdot g(0)|=\frac{1}{|z|}|z\cdot (g(z)-g(0))|\leq |g(z)-g(0)|\rightarrow 0\textrm{ as } z\rightarrow 0 $$
which means that $f$ is differentiable at $0$ with $\nabla_z f(0)=g(0)$.

Conversely let $f$ be differentiable at $0$ and $f(0)=0$. Define $$g(z):=\frac{z}{|z|^2}(f(z)-z\cdot\nabla_z f(0))+\nabla_z f(0)$$
for $z\neq 0$ and $g(0):=\nabla_z f(0)$.
Since
$$ \left|\frac{z}{|z|^2}(f(z)-z\cdot\nabla_z f(0))\right|=\frac{1}{|z|}|f(z)-f(0)-z\cdot\nabla_z f(0)|\rightarrow 0\textrm{ as } z\rightarrow 0 $$
due to differentiability of $f$ at $0$, $g$ is continuous at $0$ and hence
everywhere. Finally we have for $z\in\mathbb{R}^d$
$$ z\cdot g(z)=\frac{|z|^2}{|z|^2}(f(z)-z\cdot\nabla_z f(0))+z\cdot\nabla_z f(0)=f(z). $$
\end{proof}

\begin{definition}
For a given probability measure $\mathbb{Q}\sim\mathbb{P}$ and $\xi\in L^1(\mathcal{F}_T,\mathbb{Q})$ let $\zeta$, $Z$ and $Y$ be defined as in section \ref{meassol-prelim}. Let
$$ g: \Omega\times[0,T]\times\mathbb{R}\times\mathbb{R}^d\rightarrow\mathbb{R}^d $$
be proper. We say that $\mathbb{Q}$ is an \emph{almost measure solution} (a.-measure solution) of the BSDE given by $g$ and $\xi$ if
$$\zeta\mathbf{1}_{\{Z\neq 0\}}=g(\cdot,Y,Z)\mathbf{1}_{\{Z\neq 0\}} \qquad \mathrm{d}\mathbb{P}\otimes\mathrm{d}t \textrm{ - a.e.} $$
\end{definition}

\begin{remark}\label{meassol-alm} Obviously measure solutions are always a.-measure solutions.

Substituting the definition of an a.-measure solution into (\ref{meassol-wq}) and
the result into (\ref{meassol-decomp}) we have
$$ \xi=\mathbb{E}_\mathbb{Q}[\xi]+\int_0^T Z_s\,\mathrm{d} W_s-\int_0^T Z_s\cdot g(s,Y_s,Z_s) \,\mathrm{d} s. $$
And similarly using $Y_t=\mathbb{E}_\mathbb{Q}[\xi|\mathcal{F}_t]=Y_0+\int_0^t Z_s\,\mathrm{d}W^\mathbb{Q}_s$ we have
$$ Y_t=Y_0+\int_0^t Z_s\,\mathrm{d} W_s-\int_0^t Z_s\cdot g(s,Y_s,Z_s) \,\mathrm{d} s, \quad t\in[0,T]. $$
Thus $(Y,Z)$ is also a classical solution of the BDSE given by the
generator $f$ which satisfies
$$f(s,y,z)=z\cdot g(s,y,z),\quad s\in[0,T], y\in\mathbb{R}, z\in\mathbb{R}^d,$$ and the terminal condition $Y_T=\xi$.
\end{remark}

\begin{remark}\label{meassol-alm2}
If there is a proper $\hat{g}$, which might differ from $g$, but satisfies
$z\cdot\hat{g}(\cdot, z)=z\cdot g(\cdot, z)$ and if for
$\hat{\zeta}:=\hat{g}(\cdot,Y,Z)$ the measure
$\hat{\mathbb{Q}}:=\mathcal{E}(\hat{\zeta}\bullet W)_T\cdot\mathbb{P}$ is a
probability measure satisfying
$\mathbb{E}_{\hat{\mathbb{Q}}}[|\xi|]<\infty$, then it must be already a
\emph{measure solution} of the BSDE given by $\hat{g}$ and $\xi$. In fact,
for $t\in[0,T]$ we have
$$ Y_t=\mathbb{E}_\mathbb{Q}[\xi]+\int_0^t Z_s\,\mathrm{d} W_s-\int_0^t Z_s\cdot g(s,Y_s,Z_s) \,\mathrm{d} s= $$
$$ =\mathbb{E}_\mathbb{Q}[\xi]+\int_0^t Z_s\,\mathrm{d} W_s-\int_0^t Z_s\cdot \hat{\zeta}_s \,\mathrm{d} s=
\mathbb{E}_\mathbb{Q}[\xi]+\int_0^t Z_s\,\mathrm{d} W^{\hat{\zeta}}_s $$ where
$W^{\hat{\zeta}}_t:=W_t-\int_0^t\hat{\zeta}_s\,\mathrm{d} s$ is a Brownian motion
with respect to $\hat{\mathbb{Q}}$. Hence
$\hat{Y}_t:=\mathbb{E}_{\hat{\mathbb{Q}}}[\xi|\mathcal{F}_t]=\mathbb{E}_{\hat{\mathbb{Q}}}[Y_T|
\mathcal{F}_t]=\mathbb{E}_\mathbb{Q}[\xi]+\int_0^t Z_s\,\mathrm{d}
W^{\hat{\zeta}}_s=Y_t$ and $\hat{Z}=Z$. Therefore
$\hat{\zeta}=\hat{g}(\cdot,\hat{Y},\hat{Z})$ and so $\hat{\mathbb{Q}}$ is
indeed
a measure solution. \\
In most cases we will set $\hat{g}:=g$.
\end{remark}

\section{Stability Results}\label{meassol-stabres}

In this section we shall consider stability properties of the measure
solution concept. More formally, we shall look at sequences of measure
solutions related to sequences of terminal conditions and generators, and
provide answers to the question: under which additional conditions
concerning these model parameters will an eventually existing weak limit
measure describe a measure solution. We shall find sufficient conditions
for this to hold, including uniform $L^p$-boundedness of the Radon-Nikodym
derivatives of the sequence of measures, and suitable convergence
properties for the sequences of terminal conditions and generators. Our
main results are given by the following two technical theorems.

\begin{theorem}\label{meassol-result}
Let $(\mathbb{Q}_n)_{n\in\mathbb{N}}$ be a sequence of measures, such that
for any $n\in\mathbb{N}$, $\mathbb{Q}_n$ describes a measure solution of the
BSDE given by some proper $g_n$ and $\xi_n\in L^0(\mathcal{F}_T)$ and let
$Y^n$, $Z^n$ correspond to $\mathbb{Q}_n$ and $\xi_n$ in the sense of
section \ref{meassol-prelim}. Let $p,q>1$ such that $1/p+1/q=1$ and assume the following:
\begin{description}
\item{i)}
$$ \sup_n\mathbb{E}\left[\left(\frac{\mathrm{d}\mathbb{Q}_n}{\mathrm{d}\mathbb{P}}\right)^p\right]<\infty \,,
\qquad \sup_n\mathbb{E}_{\mathbb{Q}_n}\left[\left(
\frac{\mathrm{d}\mathbb{P}}{\mathrm{d}\mathbb{Q}_n}\right)^p\right]<\infty;
$$
\item{ii)} $\xi_n\rightarrow\xi$ as $n\rightarrow\infty$ in $L^{2q}(\mathbb{P})$
and $\xi\in L^{q^2}(\mathbb{P})$;
\item{iii)} $Y^n$ converges in $\mathcal{H}^q(\mathbb{R},\mathbb{P})$
to some $\widetilde{Y}\in\mathcal{H}^q(\mathbb{R},\mathbb{P})$ for
$n\rightarrow\infty$;
\item{iv)} $\lim_{n\rightarrow\infty}g_n(\omega,s,\cdot,\cdot)=g(\omega,s,\cdot,\cdot)$
uniformly on compact sets $K\ni(y,z)$ for a.a. $(\omega,s)$, with $g$ proper and continuous
in $(y,z)$ for a.a. $(\omega,s)$.
\end{description}
Then $(\mathbb{Q}_n)_{n\in\mathbb{N}}$ converges "weakly" to a probability
measure $\mathbb{Q}$ in the sense
\begin{equation}\label{meassol-weak} \lim_{n\rightarrow\infty}\mathbb{E}_{\mathbb{Q}_n}[X]=
\mathbb{E}_{\mathbb{Q}}[X] \qquad\forall X\in L^q(\mathbb{P}),
\end{equation} and $\mathbb{Q}$ is a measure solution of the BSDE given by
$g$ and $\xi$.
\end{theorem}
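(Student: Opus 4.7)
The plan is to use weak compactness in $L^p(\mathbb{P})$ of the Radon--Nikodym densities $R_T^n := \mathrm{d}\mathbb{Q}_n/\mathrm{d}\mathbb{P}$ to extract a candidate limit density, identify the associated measure $\mathbb{Q}$ as equivalent to $\mathbb{P}$, and then verify the defining relation $\zeta = g(\cdot,Y,Z)$ by passing to the limit through the It\^o decomposition of the $\mathbb{P}$-martingales $R^n Y^n$ and using the martingale representation theorem.

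First I would invoke reflexivity of $L^p$ and assumption (i) to extract a subsequence of $R_T^n$ converging weakly in $L^p(\mathbb{P})$ to a nonnegative $R_T$. Testing against $X\equiv 1 \in L^q(\mathbb{P})$ yields $\mathbb{E}[R_T]=1$, so $\mathbb{Q}:=R_T\cdot\mathbb{P}$ is a probability measure, and (\ref{meassol-weak}) is exactly weak convergence tested against $X\in L^q(\mathbb{P})$. To upgrade to $\mathbb{Q}\sim\mathbb{P}$, I would apply H\"older's inequality to the set $A=\{R_T=0\}$ in the form $\mathbb{P}(A)\le\mathbb{Q}_n(A)^{1/q'}\|\mathrm{d}\mathbb{P}/\mathrm{d}\mathbb{Q}_n\|_{L^p(\mathbb{Q}_n)}$; combined with the second bound in (i) and $\mathbb{Q}_n(A)\to\mathbb{Q}(A)=0$, this forces $R_T>0$ a.s. Lemma \ref{meassol-density} then produces $\zeta$ with $R_T=\mathcal{E}(\zeta\bullet W)_T$, and I set $Y_t:=\mathbb{E}_\mathbb{Q}[\xi|\mathcal{F}_t]$ with $Z$ the integrand in its representation under $W^\mathbb{Q}$.

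Next, to identify $\widetilde{Y}$ with $Y$, I would look at the $\mathbb{P}$-martingales $M^n := R^n Y^n$ with terminal value $R_T^n\xi_n$: testing $\mathbb{E}[A R_t^n Y_t^n]=\mathbb{E}[A R_T^n \xi_n]$ against bounded $A\in\mathcal{F}_t$ and exploiting the weak-$L^p$ / strong-$L^{2q}$ pairing (using (ii)) gives $\mathbb{E}[A R_t \widetilde{Y}_t]=\mathbb{E}[A R_T \xi]=\mathbb{E}[A R_t Y_t]$, so $\widetilde{Y}=Y$ since $R>0$. For the $Z$-component, It\^o's formula applied to $R^n Y^n$ under $\mathbb{P}$ identifies its representing integrand as $\pi^n:=R^n(\zeta^n Y^n+Z^n)$ with $\zeta^n:=g_n(\cdot,Y^n,Z^n)$. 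Combining convergence of the terminal values $M^n_T$ in a suitable norm with the isometry / continuity of the martingale-representation map, one extracts convergence of $\pi^n$ to the representing integrand $R(\zeta Y+Z)$ of $RY$, from which $Z^n\to Z$ can be isolated. Then (iv), the continuity of $g$, and (along a further subsequence) the a.s.\ convergence $(Y^n,Z^n)\to(Y,Z)$ deliver $\zeta^n\to g(\cdot,Y,Z)$, matching this with $\zeta$ and completing $\zeta=g(\cdot,Y,Z)$.

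The hard part is the $Z^n\to Z$ step. Assumption (iii) supplies convergence of $Y^n$ only in the $\mathcal{H}^q$-norm, which controls an integrated-in-$t$ quantity but not pointwise-in-$t$ behaviour; meanwhile the generator $g_n$ depends non-linearly on $Z^n$, which is accessible only indirectly through martingale representation. One therefore has to combine the weak-$L^p$ convergence of densities with the stronger $L^{2q}$ convergence of terminal values from (ii) to obtain strong enough convergence of the $\mathbb{P}$-martingales $M^n$, then unpack the It\^o decomposition to separate $Z^n$ from $R^n,\zeta^n,Y^n$, and finally pass to a further subsequence to obtain a.s.\ convergence before invoking the uniform-on-compacts convergence (iv). Controlling negligible sets and maintaining uniform integrability throughout this procedure, together with showing that every subsequential limit yields the same $\mathbb{Q}$ so that the full sequence converges, are the most delicate technical points.
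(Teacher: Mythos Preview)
Your overall architecture matches the paper's: extract a weak-$L^p$ limit $R_T$, show $\mathbb{Q}\sim\mathbb{P}$, identify $\widetilde Y=Y$, obtain $Z^n\to Z$, and then use (iv) to conclude $\zeta=g(\cdot,Y,Z)$. The identification $\widetilde Y=Y$ via the $\mathbb{P}$-martingales $R^nY^n$ is essentially the same idea as in the paper (there $V^n=\mathbb{E}[\xi_nR_T^n|\mathcal F_\cdot]$ and $\eta^n$ is precisely your $\pi^n$). Your argument for $R_T>0$ is a valid alternative to the paper's duality lemma, though note the paper actually proves the stronger bound $\mathbb{E}_{\mathbb{Q}}[(1/R_T)^p]<\infty$, which it later needs.

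The genuine gap is in the step you yourself flag as hard. You propose to get convergence of $\pi^n=R^n(\zeta^nY^n+Z^n)$ from convergence of the terminal values $M_T^n=R_T^n\xi_n$ ``in a suitable norm'', and then to isolate $Z^n\to Z$. But $R_T^n\to R_T$ only \emph{weakly} in $L^p$, so $M_T^n\to M_T$ only weakly as well; the martingale-representation map is not continuous from weak convergence of terminals to strong convergence of integrands. You therefore obtain at best a weak-type convergence of $\pi^n$, and after subtracting $V^n\zeta^n$ and dividing by $R^n$ (both themselves only weakly convergent), you are left with weak convergence of $Z^n$ in the sense $\mathbb{E}_{\mathbb{Q}_n}[\int Z^n\lambda]\to\mathbb{E}_{\mathbb{Q}}[\int Z\lambda]$. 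That is not enough to pass to the nonlinear limit $g_n(\cdot,Y^n,Z^n)\to g(\cdot,Y,Z)$.

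The missing idea, which the paper supplies, is a Radon--Riesz type upgrade: one separately proves convergence of the \emph{norms}
\[
\mathbb{E}_{\mathbb{Q}_n}\!\Big[\int_0^T|Z^n_s|^2\,\mathrm{d}s\Big]=\mathbb{E}_{\mathbb{Q}_n}[\xi_n^2]-\big(\mathbb{E}_{\mathbb{Q}_n}[\xi_n]\big)^2
\longrightarrow
\mathbb{E}_{\mathbb{Q}}[\xi^2]-\big(\mathbb{E}_{\mathbb{Q}}[\xi]\big)^2=\mathbb{E}_{\mathbb{Q}}\!\Big[\int_0^T|Z_s|^2\,\mathrm{d}s\Big]
\]
via It\^o's isometry under $\mathbb{Q}_n$ and $\mathbb{Q}$, using only the weak convergence of $R_T^n$ and the strong $L^{2q}$ convergence of $\xi_n$. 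Combining this with the weak convergence of $Z^n$ obtained above and expanding $|Z^n-Z|^2$ yields $\mathbb{E}_{\mathbb{Q}_n}[\int|Z^n-Z|^2]\to 0$ (along a subsequence), hence a.e.\ convergence on a further subsequence, after which (iv) applies. Without this norm-convergence observation your argument does not close.
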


Before we prove this result, let us discuss conditions \emph{i)}-\emph{iv)}: \\
Conditions \emph{ii)} and \emph{iv)} essentially mean that the
approximating BSDE should converge to the limiting BSDE in some sense. \\
Condition \emph{i)} can be seen as a compactness criterion. It stipulates
that the $p$-norms of the Radon-Nikodym-derivatives of the measure change
between $\mathbb{Q}_n$ and $\mathbb{P}$ should be uniformly bounded in $n$
in both "directions". This control is essential to conclude the existence
of a cluster point $\mathbb{Q}$ which
will later be shown to be a limit.\\
Condition \emph{iii)} is necessary to conclude convergence of the sequence
of measure solutions. It can be shown later that $\widetilde{Y}$ must be
the $Y$-process of the measures solution of the limiting BSDE. When
applying the Theorem, condition \emph{iii)} will usually be verified by
choosing for a given pair $(g,\xi)$ the approximating $(g_n,\xi_n)$ in such
a way that the associated generators $f_n(\cdot, z):=z\cdot g_n(\cdot, z),
z\in\mathbb{R}^d,$ as well as the terminal conditions $\xi_n$ are chosen
from a monotonically increasing or decreasing sequence. This allows to
apply the comparison theorem to conclude monotonicity of the $Y^n$, which
in turn will imply their convergence in
$\mathcal{H}^q(\mathbb{R},\mathbb{P})$ by means of dominated convergence.

\begin{proof}[Proof of Theorem \ref{meassol-result}]
For $n\in\mathbb{N}$ let
$R^n_T:=\frac{\mathrm{d}\mathbb{Q}_n}{\mathrm{d}\mathbb{P}}$. Then
$(R^n_T)_{n\in\mathbb{N}}$ is a bounded
sequence in $L^p(\mathbb{P})=L^p(\mathcal{F}_T,\mathbb{P})$ according to \emph{i)}. \\
Firstly we claim, that in order to prove the theorem it is actually
sufficient to show that all subsequences of
$(\mathbb{Q}_n)_{n\in\mathbb{N}}$ possess a particular probability measure
$\mathbb{Q}$ as a cluster point  (with respect to convergence figuring in
(\ref{meassol-weak})) and that this measure
$\mathbb{Q}$ is a measure solution of the BSDE given by $g$ and $\xi$. \\
In fact, assume that this is the case. Let $X\in L^q(\mathbb{P})$. Then
$\mathbb{E}_{\mathbb{Q}_n}[X]=\mathbb{E}[R^n_T X], n\in\mathbb{N},$ is a
bounded sequence by H\"older's inequality. Hence there exists a subsequence
$(\mathbb{Q}_{n_k})_{k\in\mathbb{N}}$ such that
$\mathbb{E}_{\mathbb{Q}_{n_k}}[X]$ converges to the superior limit of the
sequence. But then for a subsequence of
$(\mathbb{Q}_{n_k})_{k\in\mathbb{N}}$ the corresponding subsequence of
$(\mathbb{E}_{\mathbb{Q}_{n_k}}[X])_{k\in\mathbb{N}}$ would converge to
$\mathbb{E}_{\mathbb{Q}}[X]$. Hence $\mathbb{E}_{\mathbb{Q}}[X]$ is equal
to the superior limit of $(\mathbb{E}_{\mathbb{Q}_n}[X])_{n\in\mathbb{N}}$.
Similarly, the inferior limit would also be equal to
$\mathbb{E}_{\mathbb{Q}}[X]$. This concludes the proof.

Let us now show in several steps that all subsequences of
$(\mathbb{Q}_n)_{n\in\mathbb{N}}$ possess a probability measure
$\mathbb{Q}$  as a cluster point, and that this measure is a measure
solution of the BSDE given by $g$ and $\xi$. Since $(R^n_T)_{n\in\mathbb{N}}$
(or any subsequence of $(R^n_T)_{n\in\mathbb{N}}$) is bounded in
$L^p(\mathbb{P})$, there exists an $R_T\in L^p(\mathbb{P})$ such that
$\mathbb{E}[R_T^p]\leq\sup_{n}\mathbb{E}[(R_T^n)^p]$ together with a
subsequence of $(R^n_T)_{n\in\mathbb{N}}$ (or of any subsequence of
$(R^n_T)_{n\in\mathbb{N}}$), which we will again denote by
$(R^n_T)_{n\in\mathbb{N}}$, converging weakly to $R_T$, i.e.
$\lim_{n\rightarrow\infty}\mathbb{E}[R^n_T X]=\mathbb{E}[R_T X]$ for all $X\in
L^q(\mathbb{P}),$
where $\frac{1}{q}+\frac{1}{p}=1$. This holds true since $L^p(\mathbb{P})$ is a reflexive Banach space. \\
>From this we conclude that $\mathbb{E}[R_T X]\geq 0$ for all non-negative and
bounded $X\in L^\infty(\mathcal{F}_T)$, which means that $R_T$ is a.s.
non-negative (setting $X=\mathbf{1}_{\{R_T<0\}}$ we have
$R_T\mathbf{1}_{\{R_T<0\}}=0$ a.s.). Furthermore setting $X=1$ we get
$\mathbb{E}[R_T]=1$. This means that $\mathbb{Q}:=R_T\cdot\mathbb{P}$
is a probability measure. \\
It remains to show that $\mathbb{Q}$ is indeed a measure solution of the
BSDE given by $g$ and $\xi$ and that $\mathbb{Q}$ is  uniquely determined
and does not depend on the subsequence chosen.

\textsc{Claim1:} $\mathbb{Q}$ is equivalent to $\mathbb{P}$ and $\mathbb{E}_{\mathbb{Q}}\left[\left(\frac{\mathrm{d}\mathbb{P}}{\mathrm{d}\mathbb{Q}}\right)^p\right]<\infty$. \\
\emph{Proof:} Using Lemma \ref{meassol-lpnorm} we obtain
$$ \left(\mathbb{E}_{\mathbb{Q}}\left[\left(\frac{1}{R_T}\right)^p\right]\right)^\frac{1}{p}=
\sup_{L^\infty(\mathcal{F}_T)\ni X> 0}
\frac{\mathbb{E}_{\mathbb{Q}}\left[\frac{1}{R_T} X\right]}{\left(\mathbb{E}_{\mathbb{Q}}
\left[|X|^q\right]\right)^{1/q}}=
\sup_{L^\infty(\mathcal{F}_T)\ni X> 0}
\frac{\mathbb{E}\left[X\right]}{\left(\mathbb{E}\left[R_T
|X|^q\right]\right)^{1/q}}= $$
$$ =\sup_{L^\infty(\mathcal{F}_T)\ni X> 0}
\lim_{n\rightarrow\infty}\frac{\mathbb{E}\left[X\right]}{\left(\mathbb{E}\left[R^n_T
|X|^q\right]\right)^{1/q}} \leq \sup_{L^\infty(\mathcal{F}_T)\ni X>
0}\sup_{n}\frac{\mathbb{E}\left[X\right]}{\left(\mathbb{E}\left[R^n_T
|X|^q\right]\right)^{1/q}}=$$
$$=\sup_{n}\sup_{L^\infty(\mathcal{F}_T)\ni X> 0} \frac{\mathbb{E}_{\mathbb{Q}_n}\left[\frac{1}{R^n_T} X\right]}{\left(\mathbb{E}_{\mathbb{Q}_n}\left[|X|^q\right]\right)^{1/q}}= \sup_{n}
\left(\mathbb{E}_{\mathbb{Q}_n}\left[\left(\frac{1}{R^n_T}\right)^p\right]\right)^\frac{1}{p}<\infty.
$$
This means $\mathbb{E}\left[\frac{1}{R^{p-1}_T}\right]=\mathbb{E}
\left[R_T\left(\frac{1}{R_T}\right)^p\right]<\infty,$ and therefore $R_T>0$
$\mathbb{P}$-a.s.,  and thus we have $\mathbb{Q}\sim\mathbb{P}$.
$\hfill\square$ \\

Now for $n\in\mathbb{N}$ set $R^n:=\mathbb{E}[R^n_T|\mathcal{F}_\cdot]$,
write
$$ R^n=\exp\left(\int_0^\cdot\zeta^n_s\mathrm{d}W_s-\frac{1}{2}\int_0^\cdot|\zeta^n_s|^2\mathrm{d}s\right) $$
for some progressively measurable $\zeta^n$, and similarly
$$ R=\exp\left(\int_0^\cdot\zeta_s\mathrm{d}W_s-\frac{1}{2}\int_0^\cdot|\zeta_s|^2\mathrm{d}s\right), $$
with $R:=\mathbb{E}[R_T|\mathcal{F}_\cdot]$. We now claim that $\zeta^n$
converges to $\zeta$ in a weak
sense. \\

\textsc{Claim2:} $$ \lim_{n\rightarrow\infty}\mathbb{E}_{\mathbb{Q}_n}\left[\int_0^T \zeta^n_s\cdot \lambda_s\,\mathrm{d}s\right]=\mathbb{E}_{\mathbb{Q}}\left[\int_0^T \zeta_s\cdot\lambda_s\,\mathrm{d}s\right] $$
for all progressively measurable $\mathbb{R}^d$-valued $\lambda$ such that $\mathbb{E}\left[\left(\int_0^T |\lambda_s|^2 \,\mathrm{d}s\right)^\frac{q}{2}\right]<\infty$.\\
\emph{Proof:} Using It\^{o}'s formula we obtain for $n\in\mathbb{N}$
\begin{equation}\label{meassol-itoo1} R^n_T=1+\int_0^T R^n_s\zeta^n_s\,\mathrm{d}W_s \end{equation}
and
$$ R_T=1+\int_0^T R_s\zeta_s\,\mathrm{d}W_s. $$
Now define $X_T:=\int_0^T \lambda_s\,\mathrm{d}W_s$ and for $t\in[0,T]$ let
$X_t:=\int_0^t \lambda_s\,\mathrm{d}W_s$. Then $\mathbb{E}[R^n_T X_T]$
converges to $\mathbb{E}[R_T X_T]$, since $X_T\in L^q(\mathbb{P})$, which
follows from the BDG inequalities. On the other hand we can calculate the
co-variation of the martingales $R^n$ and $X$ by
$$ \langle R^n,X\rangle_\cdot = \int_0^\cdot R^n_s\zeta^n_s \lambda_s \,\mathrm{d}s. $$
Therefore $$ \mathbb{E}[R^n_{\tau_k}X_{\tau_k}]=\mathbb{E}\left[\int_0^{\tau_k} R^n_s\zeta^n_s \lambda_s \,\mathrm{d}s\right] $$
for some localizing sequence of stopping times $(\tau_k)$. \\
Using (\ref{meassol-itoo1}), $\sup_n\mathbb{E}[(R^n_T)^p]<\infty$ and the BDG
inequalities we have
\begin{equation}\label{meassol-cont1} \sup_n\mathbb{E}\left[\left(\int_0^T |R^n_s\zeta^n_s|^2 \,
\mathrm{d}s\right)^\frac{p}{2}\right]<\infty, \end{equation} which by
Cauchy-Schwarz' and H\"older's inequalities implies $\sup_n
\mathbb{E}\left[\int_0^{T} |R^n_s\zeta^n_s \lambda_s|
\,\mathrm{d}s\right]<\infty$. By dominated convergence for
$k\rightarrow\infty$ this implies for $n\in\mathbb{N}$
$$ \mathbb{E}[R^n_T X_T]=\mathbb{E}\left[\int_0^T R^n_s\zeta^n_s \lambda_s \,\mathrm{d}s\right]. $$
Here we use in particular that as a corollary of Doob's inequality $\sup_{t\in[0,T]}|R^n_t|
\in L^p(\mathbb{P})$ and $\sup_{t\in[0,T]}|X_t| \in L^q(\mathbb{P})$, such that $R^n_T X_T$ is integrable. \\
Similarly
$$ \mathbb{E}[R_T X_T]=\mathbb{E}\left[\int_0^T R_s\zeta_s \lambda_s \,\mathrm{d}s\right]. $$
Finally
$$ \mathbb{E}\left[\int_0^T R^n_s\zeta^n_s \lambda_s \,\mathrm{d}s\right]=
\int_0^T \mathbb{E}[R^n_s\zeta^n_s \lambda_s] \,\mathrm{d}s=\int_0^T \mathbb{E}\Big[ \mathbb{E}\big[R^n_T|\mathcal{F}_s\big]\zeta^n_s \lambda_s \Big] \,\mathrm{d}s= $$
$$ =\int_0^T \mathbb{E}[\mathbb{E}[R^n_T \zeta^n_s \lambda_s |\mathcal{F}_s]] \,\mathrm{d}s =
\mathbb{E}\left[\int_0^T R^n_T \zeta^n_s \lambda_s \,\mathrm{d}s\right]=\mathbb{E}_{\mathbb{Q}_n}\left[\int_0^T \zeta^n_s \lambda_s\,\mathrm{d}s\right] $$
And similarly $\mathbb{E}\left[\int_0^T R_s\zeta_s \lambda_s \,\mathrm{d}s\right]=
\mathbb{E}_{\mathbb{Q}}\left[\int_0^T \zeta_s \lambda_s\,\mathrm{d}s\right]$. \\
Now the assertion follows from the convergence of $\mathbb{E}[R^n_T X_T]$ to
$\mathbb{E}[R_T X_T]$ as $n\to\infty.$
$\hspace*{\fill}\square$ \\

It is not difficult to show that for $n\in\mathbb{N}$, $\xi_n\in L^2(\mathbb{Q}_n)$ using H\"older's inequality and \emph{ii)} as well as the $p$-integrability of $R^n_T$. The same holds for $\xi$ and $\mathbb{Q}$. We define $W^{\mathbb{Q}_n}:=W-\int_0^\cdot\zeta^n_s\mathrm{d}s$ and $W^{\mathbb{Q}}:=W-\int_0^\cdot\zeta_s\mathrm{d}s$ which are Brownian motions with respect to $\mathbb{Q}_n$ and $\mathbb{Q}$ respectively.
We can furthermore write:
$$ \xi_n=\mathbb{E}_{\mathbb{Q}_n}[\xi_n]+\int_0^T Z^n_s\,\mathrm{d}W^{\mathbb{Q}_n}_s, $$
$$ \xi=\mathbb{E}_{\mathbb{Q}}[\xi]+\int_0^T Z_s\,\mathrm{d}W^{\mathbb{Q}}_s, $$
with some progressively measurable $Z^n$ and $Z$.

Now define
$$ Y:=\mathbb{E}_\mathbb{Q}[\xi|\mathcal{F}_\cdot]. $$
In order to complete the proof we have to show $\zeta=g(\cdot,Y,Z)$. This will follow from $\zeta^n=g_n(\cdot,Y^n,Z^n), n\in\mathbb{N},$ using an appropriate kind of convergence of $\zeta^n$ to $\zeta$ and $Z^n$ to $Z$. We will use the type of convergence shown in \textsc{Claim2}. It has the property of uniqueness of limits, as shown in the following.

If $\lim_{n\rightarrow\infty}\mathbb{E}_{\mathbb{Q}_n}\left[\int_0^T a^n_s \lambda_s\,\mathrm{d}s\right]= \mathbb{E}_{\mathbb{Q}}\left[\int_0^T a_s \lambda_s\,\mathrm{d}s\right]$ for all bounded progressively measurable $\lambda$ and at the same time $\lim_{n\rightarrow\infty}\mathbb{E}_{\mathbb{Q}_n}\left[\int_0^T a^n_s \lambda_s\,\mathrm{d}s\right]=\mathbb{E}_{\mathbb{Q}}\left[\int_0^T b_s \lambda_s\,\mathrm{d}s\right]$ for all such $\lambda$, then $a$ and $b$ must be equal $\mathrm{d} \mathbb{P}\otimes\mathrm{d}t$-a.e.\\
This follows from $\mathbb{E}_{\mathbb{Q}}\left[\int_0^T (b_s-a_s) \lambda_s\,\mathrm{d}s\right]=0$ for all bounded $\lambda$ and the equivalence of $\mathbb{Q}$ and $\mathbb{P}$. \\
We will therefore apply the following strategy. \\
In order to complete the proof, recalling \textsc{Claim2}, it is sufficient to show
\begin{equation}\label{meassol-final}  \lim_{n\rightarrow\infty}\mathbb{E}_{\mathbb{Q}_n}\left[\int_0^T g_n(s,Y^n_s,Z^n_s) \lambda_s\,\mathrm{d}s\right]= \mathbb{E}_{\mathbb{Q}}\left[\int_0^T g(s,Y_s,Z_s) \lambda_s\,\mathrm{d}s\right] \end{equation}
for a class of progressive $\lambda$ large enough to be specified later. In order to prove this we will first show
$$ \lim_{n\rightarrow\infty}\mathbb{E}_{\mathbb{Q}_n}\left[\int_0^T Z^n_s \lambda_s\,\mathrm{d}s\right]= \mathbb{E}_{\mathbb{Q}}\left[\int_0^T Z_s \lambda_s\,\mathrm{d}s\right], $$
and then use
$$ \lim_{n\rightarrow\infty}\mathbb{E}_{\mathbb{Q}_n}\left[\int_0^T |Z^n_s|^2\,\mathrm{d}s\right]= \mathbb{E}_{\mathbb{Q}}\left[\int_0^T |Z_s|^2 \,\mathrm{d}s\right], $$
to conclude
$$ \lim_{n\rightarrow\infty}\mathbb{E}_{\mathbb{Q}_n}\left[\int_0^T |Z^n_s-Z_s|^2 \,\mathrm{d}s\right]=0, $$
from which
$$ \lim_{n\rightarrow\infty} g_n(s,Y^n_s,Z^n_s)=g(s,Y_s,Z_s)\quad\textrm{a.e.} $$
will follow after passing to a subsequence. This conclusion, obtained in the following six step argument, will finally imply (\ref{meassol-final}). \\

\textsc{Step1:} For all bounded progressively measurable processes $\lambda$ we have:
$$ \lim_{n\rightarrow\infty}\mathbb{E}_{\mathbb{Q}_n}\left[\int_0^T Z^n_s \lambda_s\,\mathrm{d}s\right]= \mathbb{E}_{\mathbb{Q}}\left[\int_0^T Z_s \lambda_s\,\mathrm{d}s\right]. $$
\emph{Proof:} For $n\in\mathbb{N}$ first write $\xi_n R^n_T$ as
$$ \xi_n R^n_T =\mathbb{E}[\xi_n R^n_T]+\int_0^T \eta^n_s\,\mathrm{d}W_s $$
and define the martingale $V^n:=\mathbb{E}[\xi_n R^n_T]+\int_0^\cdot \eta^n_s\,\mathrm{d}W_s=\mathbb{E}[\xi_n R^n_T|\mathcal{F}_\cdot]$. Using $\xi_n=\frac{V^n_T}{R^n_T}$ and remembering $R^n_T=1+\int_0^T R^n_s\zeta^n_s\,\mathrm{d}W_s$ we can use It\^{o}'s formula to express $Z^n$ in terms of $R^n$, $V^n$, $\zeta^n$ and $\eta^n$ explicitly (e.g. Lemma 1.6.7 in \cite{meassol-karat}). We end up with
$$ Z^n=\frac{\eta^n-V^n\zeta^n}{R^n},\quad n\in\mathbb{N}. $$
A similar formula holds for $Z$, $\zeta$, $R$, $V:=\mathbb{E}[R_T \xi|\mathcal{F}_\cdot]$ and $\eta$ such that $ \xi R_T=\mathbb{E}[\xi R_T]+\int_0^T \eta_s\,\mathrm{d}W_s$. \\
Now let $\lambda$ be as in the claim. Then for $n\in\mathbb{N}$
$$ \mathbb{E}_{\mathbb{Q}_n}\left[\int_0^T Z^n_s \lambda_s\,\mathrm{d}s\right]
=\mathbb{E}\left[\int_0^T (\eta^n_s-V^n_s\zeta^n_s) \lambda_s\,\mathrm{d}s\right]. $$
We first want to show $\lim_{n\rightarrow\infty}\mathbb{E}\left[\int_0^T\eta^n_s \lambda_s\,\mathrm{d}s\right]=
\mathbb{E}\left[\int_0^T\eta_s \lambda_s\,\mathrm{d}s\right]$.
The argument relies on the fact that $\lim_{n\rightarrow\infty}\mathbb{E}[\xi_n R^n_T X]=\mathbb{E}[\xi R_T X]$ for $X:=\int_0^T \lambda_s\,\mathrm{d}W_s$, on the equation $$\mathbb{E}\left[\int_0^T\eta^n_s \lambda_s\,\mathrm{d}s\right]=
\mathbb{E}\left[\left(\int_0^T \eta^n_s\,\mathrm{d}W_s\right)\left(\int_0^T \lambda_s\,\mathrm{d}W_s\right)\right]=\mathbb{E}[\xi_n R^n_T X] $$
valid for $n\in\mathbb{N}$ and a similar one for the limiting processes. This is guaranteed by $\xi_n R^n_T$ being in some $L^{p'}(\mathbb{P})$ with $1<p'$ small enough following from H\"older's inequality and \emph{i)}, \emph{ii)}, which also implies
$ \mathbb{E}\left[\left(\int_0^T |\eta^n_s|^2 \,\mathrm{d}s\right)^{p'/2}\right]<\infty $. \\
The next argument gives
$\lim_{n\rightarrow\infty}\mathbb{E}[\xi_n R^n_T X]=\mathbb{E}[\xi R_T X]$. In fact,
$$ |\mathbb{E}[\xi_n R^n_T X]-\mathbb{E}[\xi R_T X]|\leq |\mathbb{E}[\xi_n R^n_T X]-\mathbb{E}[\xi R^n_T X]|+|\mathbb{E}[\xi R^n_T X]- \mathbb{E}[\xi R_T X]|\leq $$
$$ \leq \mathbb{E}[|\xi_n X -\xi X|^q]^\frac{1}{q} \sup_n\mathbb{E}\left[(R^n_T)^p\right]^\frac{1}{p}+|\mathbb{E}[\xi R^n_T X]- \mathbb{E}[\xi R_T X]|\leq $$
\begin{equation}\label{meassol-argu} \leq \mathbb{E}[|\xi_n-\xi|^{2q}]^\frac{1}{2q}\mathbb{E}[|X|^{2q}]^\frac{1}{2q}
\sup_n\mathbb{E}\left[(R^n_T)^p\right]^\frac{1}{p}+|\mathbb{E}[R^n_T \xi X]- \mathbb{E}[R_T \xi X]|\longrightarrow 0 \end{equation}
for $n\to\infty.$ Here we employed the convergence of $\xi_n$ to $\xi$ in $L^{2q}(\mathbb{P})$ as well as $X\in L^{r}(\mathbb{P})$ for all $r\geq 1$. Furthermore we used the weak convergence of $R^n_T$ to $R_T$ and $\xi X \in L^q(\mathbb{P})$.

Secondly we have to show $\lim_{n\rightarrow\infty}\mathbb{E}\left[\int_0^T V^n_s\zeta^n_s \lambda_s\,\mathrm{d}s\right]=
\mathbb{E}\left[\int_0^T V_s\zeta_s \lambda_s\,\mathrm{d}s\right]$. \\
Applying Bayes' formula for $n\in\mathbb{N}$ we have $Y^n=\frac{\mathbb{E}[R^n_T\xi_n|\mathcal{F}_\cdot]}{R^n}$ and thus
$$ \mathbb{E}\left[\int_0^T V^n_s\zeta^n_s \lambda_s\,\mathrm{d}s\right] =
\mathbb{E}\left[\int_0^T \mathbb{E}[R^n_T\xi_n|\mathcal{F}_s]\zeta^n_s \lambda_s\,\mathrm{d}s\right]
=\mathbb{E}\left[\int_0^T Y^n_s R^n_s\zeta^n_s \lambda_s\,\mathrm{d}s\right]= $$
$$ =\mathbb{E}_{\mathbb{Q}_n}\left[\int_0^T Y^n_s\zeta^n_s \lambda_s\,\mathrm{d}s\right]. $$
An analogous equation holds for the limiting processes.
We now proceed in two steps: First we show that the difference between $\mathbb{E}_{\mathbb{Q}_n}\left[\int_0^T Y^n_s\zeta^n_s \lambda_s\,\mathrm{d}s\right]$ and $\mathbb{E}_{\mathbb{Q}_n}\left[\int_0^T Y_s\zeta^n_s \lambda_s\,\mathrm{d}s\right]= \mathbb{E}\left[\int_0^T Y_sR^n_s\zeta^n_s \lambda_s\,\mathrm{d}s\right]$ converges to zero and then we show that
$\mathbb{E}_{\mathbb{Q}_n}\left[\int_0^T Y_s\zeta^n_s \lambda_s\,\mathrm{d}s\right]$ converges to $\mathbb{E}_{\mathbb{Q}}\left[\int_0^T Y_s\zeta_s \lambda_s\,\mathrm{d}s\right]$. \\
Using Cauchy-Schwarz' and H\"older's inequalities, as well as $$\sup_{n}\mathbb{E}\left[\left(\int_0^T |R^n_s\zeta^n_s \lambda_s|^2\,\mathrm{d}s\right)^\frac{p}{2}\right]<\infty$$ (see (\ref{meassol-cont1})), the first step would follow from
$$ \lim_{n\rightarrow\infty}\mathbb{E}\left[\left(\int_0^T |Y^n_s-Y_s|^2 \,\mathrm{d}s\right)^\frac{q}{2}\right]=0. $$
This holds essentially because of condition \emph{iii)}, according to which $Y^n$ has a limit in the above sense. It remains to verify that $Y=\widetilde{Y}$. For this purpose it will be sufficient to show that $Y^n$ converges both to $Y$ and $\widetilde{Y}$ in a weak sense. Setting $X=\int_0^T \mu_s\,\mathrm{d}s$ for some bounded progressively measurable $\mu$ we have
$$ \mathbb{E}_{\mathbb{Q}_n}\left[\int_0^T Y^n_s \mu_s\,\mathrm{d}s\right]=
\mathbb{E}\left[\int_0^T \mathbb{E}[R^n_T\xi_n|\mathcal{F}_s] \mu_s\,\mathrm{d}s\right] =
\mathbb{E}\left[R^n_T \xi_n \int_0^T \mu_s\,\mathrm{d}s\right]=$$
$$=\mathbb{E}[R^n_T \xi_n X] \longrightarrow \mathbb{E}\left[R_T \xi X\right]=\ldots=
\mathbb{E}_{\mathbb{Q}}\left[\int_0^T Y_s \mu_s\,\mathrm{d}s\right] \textrm{ as }n\rightarrow\infty. $$
For the limit we used the reasoning of (\ref{meassol-argu}). \\
On the other hand $Y^n$ must converge to $\widetilde{Y}$ in the same sense. Indeed we have for $n\in\mathbb{N}$
$$ \left|\mathbb{E}_{\mathbb{Q}_n}\left[\int_0^T Y^n_s \mu_s\,\mathrm{d}s\right]-\mathbb{E}_{\mathbb{Q}}\left[\int_0^T \widetilde{Y}_s \mu_s\,\mathrm{d}s\right]\right|=$$
$$=\left|\mathbb{E}\left[R^n_T\int_0^T Y^n_s \mu_s\,\mathrm{d}s\right]-\mathbb{E}\left[R_T\int_0^T \widetilde{Y}_s \mu_s\,\mathrm{d}s\right]\right|\leq $$
$$ \leq \left|\mathbb{E}\left[R^n_T\int_0^T (Y^n_s-\widetilde{Y}_s) \mu_s\,\mathrm{d}s\right]\right|+\left|\mathbb{E}\left[R_T\int_0^T \widetilde{Y}_s \mu_s\,\mathrm{d}s\right]-\mathbb{E}\left[R^n_T\int_0^T \widetilde{Y}_s \mu_s\,\mathrm{d}s\right]\right|\leq $$
$$ \leq \sup_n\mathbb{E}\left[(R^n_T)^p\right]^\frac{1}{p}\left(\mathbb{E}\left[\left(\int_0^T (Y^n_s-\widetilde{Y}_s)^2 \,\mathrm{d}s\right)^\frac{q}{2}\left(\int_0^T |\mu_s|^2 \,\mathrm{d}s\right)^\frac{q}{2}\right]\right)^\frac{1}{q}+ $$
$$ +\left|\mathbb{E}\left[R_T\int_0^T \widetilde{Y}_s \mu_s\,\mathrm{d}s\right]-\mathbb{E}\left[R^n_T\int_0^T \widetilde{Y}_s \mu_s\,\mathrm{d}s\right]\right| $$
This first summand converges to zero by \emph{i)}, \emph{iii)} and the boundedness of $\mu$. The second one converges to zero as well due to the weak convergence of $R^n_T$ to $R_T$, for which we need $\mathbb{E}\left[\left(\int_0^T |\widetilde{Y}_s|^2\,\mathrm{d}s\right)^\frac{q}{2}\right]<\infty$. \\
Therefore $Y$ and $\widetilde{Y}$ must coincide owing to uniqueness of weak limits. Thus we have shown that the difference between $\mathbb{E}_{\mathbb{Q}_n}\left[\int_0^T Y^n_s\zeta^n_s \lambda_s\,\mathrm{d}s\right]$ and $\mathbb{E}_{\mathbb{Q}_n}\left[\int_0^T Y_s\zeta^n_s \lambda_s\,\mathrm{d}s\right]$ converges to zero. \\
It remains to note that the difference between $\mathbb{E}_{\mathbb{Q}_n}\left[\int_0^T Y_s\zeta^n_s \lambda_s\,\mathrm{d}s\right]$ and $\mathbb{E}_{\mathbb{Q}}\left[\int_0^T Y_s\zeta_s \lambda_s\,\mathrm{d}s\right]$ tends to zero as well. This is a consequence of \textsc{Claim2}. \\
Here we used $\mathbb{E}\left[\left(\int_0^T |Y_s|^2\,\mathrm{d}s\right)^\frac{q}{2}\right]=\mathbb{E}\left[\left(\int_0^T |\widetilde{Y}_s|^2\,\mathrm{d}s\right)^\frac{q}{2}\right]<\infty$.
$\hfill\square$ \\

\textsc{Step2:} We have
$$ \lim_{n\rightarrow\infty}\mathbb{E}_{\mathbb{Q}_n}\left[\int_0^T |Z^n_s|^2\,\mathrm{d}s\right]= \mathbb{E}_{\mathbb{Q}}\left[\int_0^T |Z_s|^2 \,\mathrm{d}s\right]. $$
\emph{Proof:} This follows from It\^{o}'s isometry based on $\lim_{n\rightarrow\infty}\mathbb{E}_{\mathbb{Q}_n}[\xi_n]=\mathbb{E}_{\mathbb{Q}}[\xi]$ and $\lim_{n\rightarrow\infty}\mathbb{E}_{\mathbb{Q}_n}[\xi_n^2]=\mathbb{E}_{\mathbb{Q}}[\xi^2]$. Both statements follow from the strong convergence of $\xi_n$ to $\xi$ stated in \emph{ii)} and the weak convergence of $R^n_T$ to $R_T$. We only give details for the last one. In fact
$$ |\mathbb{E}_{\mathbb{Q}_n}[\xi_n^2]-\mathbb{E}_{\mathbb{Q}}[\xi^2]|=|\mathbb{E}[R^n_T\xi_n^2]-\mathbb{E}[R^n_T \xi^2]|+
|\mathbb{E}[R^n_T\xi^2]-\mathbb{E}[R_T\xi^2]|\leq $$
$$ \leq \mathbb{E}[|\xi_n^2-\xi^2|^q]^\frac{1}{q}\sup_n\mathbb{E}\left[(R^n_T)^p\right]^\frac{1}{p}+|\mathbb{E}[R^n_T\xi^2]-\mathbb{E}[R_T\xi^2]|\leq $$
$$ \leq \mathbb{E}[|\xi_n+\xi|^{2q}]^\frac{1}{2q} \mathbb{E}[|\xi_n-\xi|^{2q}]^\frac{1}{2q}\sup_n\mathbb{E}\left[(R^n_T)^p\right]^\frac{1}{p}+|\mathbb{E}[R^n_T\xi^2]-\mathbb{E}[R_T\xi^2]|\longrightarrow 0 $$
as $n\to\infty.$ Here we used \emph{ii)}, $\sup_n\mathbb{E}[|\xi_n|^{2q}]<\infty$ and $\xi^2\in L^q(\mathbb{P})$. \\

\textsc{Step3:} We claim that, eventually passing to a subsequence, we can assume w.l.o.g.
$$ \lim_{n\rightarrow\infty}\mathbb{E}_{\mathbb{Q}_n}\left[\int_0^T |Z^n_s-Z_s|^2 \,\mathrm{d}s\right]=0. $$

\emph{Proof:} First of all, as a consequence of the BDG inequalities, \emph{ii)} and the definition of $Z$ we get $\mathbb{E}_{\mathbb{Q}}\left[\left(\int_0^T |Z_s|^2 \,\mathrm{d}s\right)^{q^2}\right]<\infty$. Moreover, $\mathbb{E}_{\mathbb{Q}}[(1/R_T)^p]<\infty$ combined with H\"older's inequality implies
\begin{equation}\label{meassol-cont2} \mathbb{E}\left[\left(\int_0^T |Z_s|^2 \,\mathrm{d}s\right)^q\right]=\mathbb{E}_\mathbb{Q}\left[\frac{1}{R_T}\left(\int_0^T |Z_s|^2 \,\mathrm{d}s\right)^q\right]<\infty \end{equation}
which in turn implies
$$ \sup_n\mathbb{E}_{\mathbb{Q}_n}\left[\int_0^T |Z_s|^2 \,\mathrm{d}s\right]<\infty, $$
again by H\"older's inequality. \\
Using $|Z^n_s-Z_s|^2=|Z^n_s|^2-2Z^n_sZ_s+|Z_s|^2$ and \textsc{Step2}, we see that it is sufficient to show
$$ \lim_{n\rightarrow\infty}\mathbb{E}_{\mathbb{Q}_n}\left[\int_0^T 2Z^n_sZ_s\,\mathrm{d}s\right]= \mathbb{E}_{\mathbb{Q}}\left[\int_0^T 2Z_sZ_s\,\mathrm{d}s\right] $$
which obviously must follow from an extension of \textsc{Step1}. The difficulty we have to overcome here is that $Z$ is not necessarily bounded. However by replacing $Z$ with some $\tilde{Z}^k:=((-k)\vee Z)\wedge k$  we have for any $k\in\mathbb{N}$
$$ \lim_{n\rightarrow\infty}\mathbb{E}_{\mathbb{Q}_n}\left[\int_0^T Z^n_s\tilde{Z}^k_s\,\mathrm{d}s\right]= \mathbb{E}_{\mathbb{Q}}\left[\int_0^T Z_s\tilde{Z}^k_s\,\mathrm{d}s\right]. $$
Since $\lim_{k\rightarrow\infty}\mathbb{E}_{\mathbb{Q}}\left[\int_0^T Z_s\tilde{Z}^k_s\,\mathrm{d}s\right]=\mathbb{E}_{\mathbb{Q}}\left[\int_0^T |Z_s|^2\,\mathrm{d}s\right]$ using dominated convergence it remains to show
$$ \lim_{n\rightarrow\infty}\mathbb{E}_{\mathbb{Q}_n}\left[\int_0^T Z^n_sZ_s\,\mathrm{d}s\right]=\lim_{k\rightarrow\infty}\lim_{n\rightarrow\infty}\mathbb{E}_{\mathbb{Q}_n}\left[\int_0^T Z^n_s\tilde{Z}^k_s\,\mathrm{d}s\right]. $$
Since by Cauchy-Schwarz' inequality $\mathbb{E}_{\mathbb{Q}_n}\left[\int_0^T Z^n_sZ_s\,\mathrm{d}s\right], n\in\mathbb{N},$ is a bounded sequence, we can assume w.l.o.g. that it is convergent, after eventually passing to a subsequence. Hence
$$ \lim_{k\rightarrow\infty}\lim_{n\rightarrow\infty}\mathbb{E}_{\mathbb{Q}_n}\left[\int_0^T Z^n_s\tilde{Z}^k_s\,\mathrm{d}s\right]= \qquad\qquad\qquad\qquad\qquad\qquad\qquad\qquad $$ $$\qquad\qquad\qquad\qquad=\lim_{k\rightarrow\infty}\lim_{n\rightarrow\infty}\left(\mathbb{E}_{\mathbb{Q}_n}\left[\int_0^T Z^n_s(\tilde{Z}^k_s-Z_s)\,\mathrm{d}s\right]+\mathbb{E}_{\mathbb{Q}_n}\left[\int_0^T Z^n_s Z_s\,\mathrm{d}s\right]\right). $$
Therefore it remains to show
$$ \lim_{k\rightarrow\infty}\lim_{n\rightarrow\infty}\mathbb{E}_{\mathbb{Q}_n}\left[\int_0^T Z^n_s(\tilde{Z}^k_s-Z_s)\,\mathrm{d}s\right]=0. $$
We have
$$ \sup_{n}\left|\mathbb{E}_{\mathbb{Q}_n}\left[\int_0^T Z^n_s(\tilde{Z}^k_s-Z_s)\,\mathrm{d}s\right]\right|^2\leq \hspace*{5cm} $$
$$\hspace*{2cm} \leq \sup_{n}\mathbb{E}_{\mathbb{Q}_n}\left[\int_0^T |Z^n_s|^2 \,\mathrm{d}s\right]\sup_{n}\mathbb{E}_{\mathbb{Q}_n}\left[\int_0^T |\tilde{Z}^k_s-Z_s|^2 \,\mathrm{d}s\right]\leq $$
$$ \leq \sup_{n}\mathbb{E}_{\mathbb{Q}_n}\left[\int_0^T |Z^n_s|^2 \,\mathrm{d}s\right] \sup_{n}\mathbb{E}[(R^n_T)^p]^\frac{1}{p}\left(\mathbb{E}\left[\left(\int_0^T |\tilde{Z}^k_s-Z_s|^2 \,\mathrm{d}s\right)^q\right]\right)^\frac{1}{q}. $$
The result follows from $\tilde{Z}^k\rightarrow Z$, dominated convergence and (\ref{meassol-cont2}).
$\hfill\square$ \\

\textsc{Step4:} Passing to a subsequence once more, we deduce
$$ \lim_{n\rightarrow\infty}g_n(\cdot,Y^n,Z^n)=g(\cdot,Y,Z) \qquad \mathrm{d} \mathbb{P}\otimes\mathrm{d}s\textrm{ a.e.}$$
\emph{Proof:}
We have
$$ \mathbb{E}\left[\int_0^T |Z^n_s-Z_s|^2\wedge\frac{1}{T} \,\mathrm{d}s\right]=
\mathbb{E}_{\mathbb{Q}_n}\left[\frac{1}{R^n_T}\int_0^T |Z^n_s-Z_s|^2\wedge\frac{1}{T} \,\mathrm{d}s\right]\leq $$
$$ \leq\left(\sup_n\mathbb{E}_{\mathbb{Q}_n}\left[\left(\frac{1}{R^n_T}\right)^p\right]\right)^\frac{1}{p}\left(\mathbb{E}_{\mathbb{Q}_n}\left[\left(\int_0^T |Z^n_s-Z_s|^2\wedge\frac{1}{T} \,\mathrm{d}s\right)^q\right]\right)^\frac{1}{q}\leq $$
$$ \leq\left(\sup_n\mathbb{E}_{\mathbb{Q}_n}\left[\left(\frac{1}{R^n_T}\right)^p\right]\right)^\frac{1}{p}\left(\mathbb{E}_{\mathbb{Q}_n}\left[\int_0^T |Z^n_s-Z_s|^2 \,\mathrm{d}s\right]\right)^\frac{1}{q}\rightarrow 0 \textrm{ as }n\rightarrow\infty. $$
Hence by choosing a subsequence which we again denote by the same symbols we have $|Z^n-Z|^2\wedge\frac{1}{T} \rightarrow 0$ a.e., which means $Z^n\rightarrow Z$ a.e. We can also assume $Y^n\rightarrow Y=\widetilde{Y}$ a.e. This means in particular that $(Y^n_s(\omega),Z^n_s(\omega))$ is a bounded sequence for a.a. $(\omega,s)$. Hence by \emph{iv)}
$$ \lim_{n\rightarrow\infty}|g_n(\omega,s,Y^n_s(\omega),Z^n_s(\omega))-g(\omega,s,Y^n_s(\omega),Z^n_s(\omega))|=0 $$
for a.a. $(\omega,s)$. We also have using the continuity of $g$
$$ \lim_{n\rightarrow\infty}|g(\omega,s,Y^n_s(\omega),Z^n_s(\omega))-g(\omega,s,Y_s(\omega),Z_s(\omega))|=0 $$
for a.a. $(\omega,s)$. This proves the assertion.
$\hfill\square$ \\

\textsc{Step5:} For all bounded progressively measurable $\lambda$ such that $\sup_n |g_n(s,Y^n_s,Z^n_s) \lambda_s|\leq C_\lambda$ with some constant $C_\lambda\in\mathbb{R}$ depending only on $\lambda$ we have:
$$ \lim_{n\rightarrow\infty}\mathbb{E}_{\mathbb{Q}_n}\left[\int_0^T g_n(s,Y^n_s,Z^n_s) \lambda_s\,\mathrm{d}s\right]= \mathbb{E}_{\mathbb{Q}}\left[\int_0^T g(s,Y_s,Z_s) \lambda_s\,\mathrm{d}s\right]. $$
\emph{Proof:} Since $\mathbb{E}_{\mathbb{Q}_n}\left[\int_0^T g(s,Y_s,Z_s) \lambda_s\,\mathrm{d}s\right]$ converges to
$\mathbb{E}_{\mathbb{Q}}\left[\int_0^T g(s,Y_s,Z_s) \lambda_s\,\mathrm{d}s\right]$ according to the weak convergence of $R^n_T$ to $R_T$ and the boundedness of $|g(s,Y_s,Z_s) \lambda_s|$, it is enough to show
$$ \mathbb{E}_{\mathbb{Q}_n}\left[\int_0^T g_n(s,Y^n_s,Z^n_s) \lambda_s\,\mathrm{d}s\right]-\mathbb{E}_{\mathbb{Q}_n}\left[\int_0^T g(s,Y_s,Z_s) \lambda_s\,\mathrm{d}s\right]\rightarrow 0. $$
But this follows from \textsc{Step4} by noting
$$ \mathbb{E}_{\mathbb{Q}_n}\left[\int_0^T (g_n(s,Y^n_s,Z^n_s)-g(s,Y_s,Z_s)) \lambda_s\,\mathrm{d}s\right]\leq $$
$$ \leq \sup_n\mathbb{E}[(R^n_T)^p]^\frac{1}{p} \left(\mathbb{E}\left[\left(\int_0^T (g_n(s,Y^n_s,Z^n_s)-g(s,Y_s,Z_s)) \lambda_s\,\mathrm{d}s\right)^q\right]\right)^\frac{1}{q} $$
which tends to zero for $n\rightarrow\infty$ by dominated convergence using the uniform boundedness of $g_n(\cdot,Y^n,Z^n) \lambda$ and hence $g(\cdot,Y,Z) \lambda$.
$\hfill\square$ \\

\textsc{Step6:} Using \textsc{Claim2}, \textsc{Step5} and $\zeta^n=g_n(\cdot,Y^n,Z^n), n\in\mathbb{N},$ we have
$$ \mathbb{E}_{\mathbb{Q}}\left[\int_0^T (\zeta_s-g(s,Y_s,Z_s))\lambda_s\,\mathrm{d}s\right]=0 $$
for all bounded progressive $\lambda$ such that $\sup_n |g_n(\cdot,Y^n,Z^n) \lambda|\leq C_\lambda$. Now define for all $C>0$
$$ \lambda^C:=(\zeta-g(\cdot,Y,Z))\mathbf{1}_{\{|\zeta-g(\cdot,Y,Z)|\leq C\}}\mathbf{1}_{\{\sup_{n}|g_n(\cdot,Y^n,Z^n)|\leq C\}} $$
which is progressive, bounded and satisfies $|g_n(\cdot,Y^n,Z^n)\lambda^C|\leq C^2$. This implies
$$ \mathbb{E}_{\mathbb{Q}}\left[\int_0^T |\zeta_s-g(s,Y_s,Z_s)|^2\mathbf{1}_{\{|\zeta_s-g(s,Y_s,Z_s)|\leq C\}}\mathbf{1}_{\{\sup_{n}|g_n(s,Y^n_s,Z^n_s)|\leq C\}}\,\mathrm{d}s\right]=0, $$
and hence
$$ |\zeta-g(\cdot,Y,Z)|^2\mathbf{1}_{\{|\zeta-g(\cdot,Y,Z)|\leq C\}}\mathbf{1}_{\{\sup_{n}|g_n(\cdot,Y^n,Z^n)|\leq C\}}=0 \qquad \mathrm{d}\mathbb{P}\otimes\mathrm{d}t-\textrm{ a.e.} $$
Clearly $\sup_{n}|g_n(\omega,s,Y^n_s(\omega),Z^n_s(\omega))|$, $g(\omega,s,Y_s(\omega),Z_s(\omega))$ and $\zeta_s(\omega)$ are real numbers for almost all $(\omega,s)$. Hence letting $C$ go to infinity we have $\zeta=g(\cdot,Y,Z)$ $\mathrm{d} \mathbb{P}\otimes\mathrm{d}s\textrm{-a.e.}$, which means that $\mathbb{Q}$ is a measure solution of the BSDE given by $g$ and $\xi$. \\
It remains to mention that although in our construction $\mathbb{Q}$ depends on the choice of a weakly convergent subsequence of $(R^n_T)_{n\in\mathbb{N}}$, we always end up with the same $\mathbb{Q}$, since it is already determined by $\tilde{Y}=Y=\mathbb{E}_\mathbb{Q}[\xi|\mathcal{F}_\cdot]$. Once the $Y$-process of a measure solution $\mathbb{Q}$ is given, the $Z$-process is uniquely determined by the martingale part of $Y$ and this already determines
$\mathbb{Q}=\mathcal{E}(g(\cdot,Y,Z)\bullet W)_T\cdot\mathbb{P}$.
\end{proof}

\begin{remark}\label{meassol-resrem1}
In particular we have shown
$$\mathbb{E}\left[\left(\frac{\mathrm{d}\mathbb{Q}}{\mathrm{d}\mathbb{P}}\right)^p\right]\leq\sup_n\mathbb{E}\left[\left(\frac{\mathrm{d}\mathbb{Q}_n}{\mathrm{d}\mathbb{P}}\right)^p\right] \,,
\qquad \mathbb{E}_{\mathbb{Q}}\left[\left(\frac{\mathrm{d}\mathbb{P}}{\mathrm{d}\mathbb{Q}}\right)^p\right]\leq\sup_n\mathbb{E}_{\mathbb{Q}_n}\left[\left(\frac{\mathrm{d}\mathbb{P}}{\mathrm{d}\mathbb{Q}_n}\right)^p\right]. $$
\end{remark}
\begin{remark}\label{meassol-resrem2}
We have also shown
$\tilde{Y}=\mathbb{E}_{\mathbb{Q}}[\xi|\mathcal{F}_\cdot]$
$\mathrm{d}\mathbb{P}\otimes\mathrm{d}t$-a.e.
\end{remark}
\begin{remark}\label{meassol-resrem3}
In addition we have proven that each subsequence of
$(Z^n)_{n\in\mathbb{N}}$ has a subsequence which converges to $Z$
$\mathbb{P}\otimes\lambda_{[0,T]}$-a.e. In other words
$(Z^n)_{n\in\mathbb{N}}$ converges to $Z$ in measure.
\end{remark}

In the following theorem, the result of Theorem \ref{meassol-result} will be
refined and extended to a situation in which the generating sequence
$(g_n)_{n\in\mathbb{N}}$ converges to $g$ only uniformly on compacts
avoiding the origin in $\mathbb{R}^d$ and in which accordingly only
a.-measure solutions are involved.

\begin{theorem}\label{meassol-result'}
Let $(\mathbb{Q}_n)_{n\in\mathbb{N}}$ be a sequence of measures, each
giving an a.-measure solution of the BSDE given by some $g_n$ and $\xi_n$.
Let $p,q>1$ such that $1/p+1/q=1$ and such that:
\begin{description}
\item{i)}
$$ \sup_n\mathbb{E}\left[\left(\frac{\mathrm{d}\mathbb{Q}_n}{\mathrm{d}\mathbb{P}}\right)^p\right]<\infty \,,
\qquad \sup_n\mathbb{E}_{\mathbb{Q}_n}\left[\left(\frac{\mathrm{d}\mathbb{P}}{\mathrm{d}\mathbb{Q}_n}\right)^p\right]<\infty, $$
\item{ii)} $\xi_n\rightarrow\xi$ as $n\rightarrow\infty$ in $L^{2q}(\mathbb{P})$ and $\xi\in L^{q^2}(\mathbb{P})$,
\item{iii)} $Y^n:=\mathbb{E}_{\mathbb{Q}_n}[\xi_n|\mathcal{F}_\cdot]$ converges for $n\rightarrow\infty$ to some $\widetilde{Y}\in\mathcal{H}^q(\mathbb{R},\mathbb{P})$ a.e. and in $\mathcal{H}^q(\mathbb{R},\mathbb{P})$,
\item{iv)} for a.a. $(\omega,s)$: $\lim_{n\rightarrow\infty}g_n(\omega,s,\cdot,\cdot)=g(\omega,s,\cdot,\cdot)$
(with a proper $g$) uniformly on compacts $K$ such that
$\{(y,z)|\,y\in\mathbb{R},z=0\in\mathbb{R}^d\}\cap K=\emptyset$. Let
furthermore $g(\omega,s,\cdot,\cdot)$ be continuous outside
$\{(y,z)|\,y\in\mathbb{R},z=0\in\mathbb{R}^d\}$ for a.a. $(\omega,s)$.
\end{description}
Then there exists an a.-measure solution of the BSDE given by $g$ and $\xi$.
\end{theorem}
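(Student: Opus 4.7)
The plan is to recycle the six-step architecture of the proof of Theorem \ref{meassol-result}, with modifications that localize all uses of $g_n$ and $g$ onto the set $\{Z^n\neq 0\}$ (respectively $\{Z\neq 0\}$). This is where the a.-measure solution identity is in force, and, thanks to iv), also where the uniform-on-compacts convergence of $g_n$ to $g$ can be applied.

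First, the opening reductions go through verbatim: from i) I extract a subsequence of $R^n_T:=\mathrm{d}\mathbb{Q}_n/\mathrm{d}\mathbb{P}$ converging weakly in $L^p(\mathbb{P})$ to some $R_T$, define $\mathbb{Q}:=R_T\cdot\mathbb{P}$, and verify via Lemma \ref{meassol-lpnorm} that $\mathbb{Q}\sim\mathbb{P}$ with $\mathbb{E}_{\mathbb{Q}}[(1/R_T)^p]<\infty$, as in \textsc{Claim1}. Claim 2 (the weak-type convergence $\mathbb{E}_{\mathbb{Q}_n}[\int_0^T \zeta^n_s\cdot\lambda_s\,\mathrm{d}s]\to\mathbb{E}_{\mathbb{Q}}[\int_0^T\zeta_s\cdot\lambda_s\,\mathrm{d}s]$) depends only on the Dol\'eans--Dade structure of the densities and carries over without change, as does the uniqueness-of-limits argument it supports.

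Next, I define $Y:=\mathbb{E}_{\mathbb{Q}}[\xi|\mathcal{F}_\cdot]$ and extract its martingale integrand $Z$ under $\mathbb{Q}$. Steps 1, 2 and 3 of the earlier proof -- yielding $\mathbb{E}_{\mathbb{Q}_n}[\int_0^T Z^n_s\lambda_s\,\mathrm{d}s]\to\mathbb{E}_{\mathbb{Q}}[\int_0^T Z_s\lambda_s\,\mathrm{d}s]$ for bounded $\lambda$, the norm convergence $\mathbb{E}_{\mathbb{Q}_n}[\int_0^T|Z^n_s|^2\,\mathrm{d}s]\to\mathbb{E}_{\mathbb{Q}}[\int_0^T|Z_s|^2\,\mathrm{d}s]$, and hence $Z^n\to Z$ in $L^2(\mathbb{Q}_n)$ along a subsequence -- only invoke the martingale representation and the assumptions i)--iii), not the specific form of $\zeta^n$, so they transfer directly. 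Passing to a further subsequence as in Remark \ref{meassol-resrem3} gives $Z^n\to Z$ $\mathbb{P}\otimes\lambda_{[0,T]}$-a.e.; combined with the a.e.\ assumption in iii) we may assume along the same subsequence $Y^n\to Y=\widetilde{Y}$ a.e.

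The crucial new step replaces Step 4. On the set $\{Z\neq 0\}$ the a.e.\ convergence of $(Y^n,Z^n)$ to $(Y,Z)$ forces $(Y^n_s(\omega),Z^n_s(\omega))$ eventually to lie in a compact neighborhood of $(Y_s(\omega),Z_s(\omega))$ which is disjoint from the hyperplane $\{z=0\}$, so by iv)
$$\lim_{n\to\infty}g_n(\omega,s,Y^n_s(\omega),Z^n_s(\omega))\,\mathbf{1}_{\{Z_s(\omega)\neq 0\}}=g(\omega,s,Y_s(\omega),Z_s(\omega))\,\mathbf{1}_{\{Z_s(\omega)\neq 0\}}$$
$\mathbb{P}\otimes\mathrm{d}s$-a.e. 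Moreover $\mathbf{1}_{\{Z^n\neq 0\}}\to 1$ a.e.\ on $\{Z\neq 0\}$, so the a.-measure solution identity $\zeta^n\mathbf{1}_{\{Z^n\neq 0\}}=g_n(\cdot,Y^n,Z^n)\mathbf{1}_{\{Z^n\neq 0\}}$, multiplied by $\mathbf{1}_{\{Z\neq 0\}}$, converges a.e.\ on both sides to $\zeta\mathbf{1}_{\{Z\neq 0\}}$ and $g(\cdot,Y,Z)\mathbf{1}_{\{Z\neq 0\}}$ respectively (the convergence of the left side being settled by Claim 2).

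Adapting Steps 5 and 6, I then test against the truncated processes
$$\lambda^C:=(\zeta-g(\cdot,Y,Z))\mathbf{1}_{\{|\zeta-g(\cdot,Y,Z)|\le C\}}\mathbf{1}_{\{\sup_n|g_n(\cdot,Y^n,Z^n)\mathbf{1}_{\{Z^n\neq 0\}}|\le C\}}\mathbf{1}_{\{Z\neq 0\}},$$
which are bounded, progressive and satisfy $|g_n(\cdot,Y^n,Z^n)\mathbf{1}_{\{Z^n\neq 0\}}\lambda^C|\le C^2$. Applying Claim 2 to $\zeta^n\mathbf{1}_{\{Z^n\neq 0\}}\lambda^C=g_n(\cdot,Y^n,Z^n)\mathbf{1}_{\{Z^n\neq 0\}}\lambda^C$, combined with dominated convergence on the right-hand side and then letting $C\to\infty$, yields $(\zeta-g(\cdot,Y,Z))\mathbf{1}_{\{Z\neq 0\}}=0$ $\mathrm{d}\mathbb{P}\otimes\mathrm{d}s$-a.e., which is exactly the defining identity for $\mathbb{Q}$ to be an a.-measure solution of the BSDE given by $g$ and $\xi$. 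Uniqueness of the cluster measure $\mathbb{Q}$, and hence convergence of the whole sequence, is recovered as in the concluding paragraph of the proof of Theorem \ref{meassol-result}.

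The main obstacle is the mismatch between the sets $\{Z^n\neq 0\}$ on which the approximating identities hold and the set $\{Z\neq 0\}$ on which iv) gives uniform-on-compacts convergence of $g_n$; the resolution is to restrict to $\{Z\neq 0\}$ up front, and use the a.e.\ convergence of $Z^n$ extracted from $L^2(\mathbb{Q}_n)$-convergence to force $\mathbf{1}_{\{Z^n\neq 0\}}\to 1$ on that set along a subsequence.
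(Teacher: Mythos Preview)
Your plan correctly identifies that Steps 1--3 of the proof of Theorem \ref{meassol-result} carry over unchanged and that the essential modification lies in localizing to $\{Z\neq 0\}$; this is exactly the paper's approach.

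There is, however, a gap in your adaptation of Steps 5--6. You write ``Applying Claim 2 to $\zeta^n\mathbf{1}_{\{Z^n\neq 0\}}\lambda^C$'', but Claim 2 furnishes the limit of $\mathbb{E}_{\mathbb{Q}_n}[\int_0^T\zeta^n_s\lambda_s\,\mathrm{d}s]$ only for a \emph{fixed} test process $\lambda$; the $n$-dependent factor $\mathbf{1}_{\{Z^n\neq 0\}}$ cannot simply be absorbed into it. Observing that $\mathbf{1}_{\{Z^n\neq 0\}}\to 1$ a.e.\ on $\{Z\neq 0\}$ is not by itself enough, since $\zeta^n$ is unbounded and one must still control the remainder $\mathbb{E}_{\mathbb{Q}_n}[\int_0^T\zeta^n_s\mathbf{1}_{\{Z^n_s=0\}}\lambda^C_s\,\mathrm{d}s]$. (This can in fact be done via the uniform bound $\sup_n\mathbb{E}[(\int_0^T|R^n_s\zeta^n_s|^2\,\mathrm{d}s)^{p/2}]<\infty$ from (\ref{meassol-cont1}) together with dominated convergence, but you do not say so.) The paper resolves the issue by a different device: for each $N\in\mathbb{N}$ it inserts the $n$-\emph{independent} indicator $\mathbf{1}_{\{\inf_{m\geq N}|Z^m|>0\}}$, on whose support one has $Z^n\neq 0$ for all $n\geq N$, so that the a.-measure identity reads $\zeta^n\mathbf{1}_{\{\inf_{m\geq N}|Z^m|>0\}}=g_n(\cdot,Y^n,Z^n)\mathbf{1}_{\{Z^n\neq 0\}}\mathbf{1}_{\{\inf_{m\geq N}|Z^m|>0\}}$ and Claim 2 applies cleanly with the fixed test $\mathbf{1}_{\{\inf_{m\geq N}|Z^m|>0\}}\mathbf{1}_{\{Z\neq 0\}}\lambda$. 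One then lets $N\to\infty$ alongside $C\to\infty$, using that on $\{Z\neq 0\}$ the a.e.\ convergence $Z^n\to Z$ forces $\inf_{m\geq N}|Z^m|>0$ eventually.

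Your final paragraph also overshoots the target: Theorem \ref{meassol-result'} asserts only \emph{existence} of an a.-measure solution, not weak convergence of the full sequence $(\mathbb{Q}_n)$. The uniqueness argument at the end of Theorem \ref{meassol-result} used that a measure solution satisfies $\mathbb{Q}=\mathcal{E}(g(\cdot,Y,Z)\bullet W)_T\cdot\mathbb{P}$ and is therefore determined by $(Y,Z)$. For an a.-measure solution only $\zeta\mathbf{1}_{\{Z\neq 0\}}$ is prescribed, so two weak cluster points could share the same $(Y,Z)$ yet differ on $\{Z=0\}$; the paper accordingly makes no uniqueness claim here.
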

\begin{proof}
The proof is very similar to the proof of Theorem \ref{meassol-result}. We again
start by passing to a subsequence in order to obtain a weak cluster point.
The proof differs beginning with \textsc{Step4}, since
$\lim_{n\rightarrow\infty}g_n(\cdot,Y^n,Z^n)=g(\cdot,Y,Z)$ will not hold in
general. Instead we can show
$$ \lim_{n\rightarrow\infty}g_n(\cdot,Y^n,Z^n)\mathbf{1}_{\{Z^n\neq 0\}}\mathbf{1}_{\{Z\neq 0\}}=g(\cdot,Y,Z)
\mathbf{1}_{\{Z\neq 0\}} \qquad \mathrm{d}\mathbb{P}\otimes\mathrm{d}t-\textrm{ a.e.}$$ For this purpose, using \textsc{Step2} (see \textsc{Step4} in the proof of Theorem \ref{meassol-result} for
details), we can assume w.l.o.g. that $Z^n\rightarrow Z$ a.e. Hence the
sequence $(Z^n_s(\omega))$ will for a.a. $(\omega,s)$ such that $Z_s(\omega)\neq
0$ be contained in some compact set not containing $0$ for $n$ large
enough. Therefore
$$ \lim_{n\rightarrow\infty}|g_n(s,Y^n_s,Z^n_s)(\omega)\mathbf{1}_{\{Z^n_s\neq 0\}}(\omega)\mathbf{1}_{\{Z_s\neq 0\}}(\omega)- \hspace*{5cm}$$
$$\hspace*{5cm} -g(s,Y^n_s,Z^n_s)(\omega)\mathbf{1}_{\{Z^n_s\neq 0\}}(\omega)\mathbf{1}_{\{Z_s\neq 0\}}(\omega)|=0 $$
for a.a. $(\omega,s)$. We also have, by the continuity of $g$ outside
$\{(y,z)|\,y\in\mathbb{R},z=0\}$:
$$ \lim_{n\rightarrow\infty}|g(\omega,s,Y^n_s(\omega),Z^n_s(\omega))\mathbf{1}_{\{Z^n_s\neq 0\}}(\omega)\mathbf{1}_{\{Z_s\neq 0\}}(\omega)- \hspace*{4cm}$$
$$\hspace*{5cm} -g(\omega,s,Y_s(\omega),Z_s(\omega))\mathbf{1}_{\{Z_s\neq 0\}}(\omega)|=0 $$
for a.a. $(\omega,s)$. This proves the assertion.

Using this we can replace \textsc{Step5} in the proof of Theorem
\ref{meassol-result} by the statement
$$ \lim_{n\rightarrow\infty}\mathbb{E}_{\mathbb{Q}_n}\left[\int_0^T g_n(s,Y^n_s,Z^n_s)\mathbf{1}_{\{Z^n_s\neq 0\}}\mathbf{1}_{\{Z_s\neq 0\}} \lambda_s\,\mathrm{d}s\right]= \hspace*{4cm}$$
$$\hspace*{5cm} =\mathbb{E}_{\mathbb{Q}}\left[\int_0^T g(s,Y_s,Z_s)\mathbf{1}_{\{Z_s\neq 0\}} \lambda_s\,\mathrm{d}s\right] $$
for all bounded progressively measurable $\lambda$ such that $\sup_n
|g_n(\cdot,Y^n,Z^n)\mathbf{1}_{\{Z^n\neq 0\}}\mathbf{1}_{\{Z\neq 0\}}
\lambda|\leq C_\lambda$ a.e. with some constant $C_\lambda>0$
depending only on $\lambda$.

On the other hand, using \textsc{Claim2}, we have
$$ \lim_{n\rightarrow\infty}\mathbb{E}_{\mathbb{Q}_n}\left[\int_0^T \zeta^n_s\mathbf{1}_{\{\inf_{m\geq N}|Z^m_s|>0\}} \mathbf{1}_{\{Z_s\neq 0\}}\lambda_s\,\mathrm{d}s\right]= \hspace*{4cm}$$
$$\hspace*{5cm} =\mathbb{E}_{\mathbb{Q}}\left[\int_0^T \zeta_s\mathbf{1}_{\{\inf_{m\geq N}|Z^m_s|>0\}} \mathbf{1}_{\{Z_s\neq 0\}}\lambda_s\,\mathrm{d}s\right] $$
for all progressive bounded $\lambda$ and all $N\in\mathbb{N}$. Considering
$$g_n(\cdot,Y^n,Z^n)\mathbf{1}_{\{Z^n\neq 0\}}\mathbf{1}_{\{\inf_{m\geq N}|Z^m|>0\}}= \hspace*{5cm}$$
$$\hspace*{4cm} =\zeta^n\mathbf{1}_{\{Z^n\neq 0\}}\mathbf{1}_{\{\inf_{m\geq N}|Z^m|>0\}}=
\zeta^n\mathbf{1}_{\{\inf_{m\geq N}|Z^m|>0\}}$$ for all $n\geq N$, this
implies
$$ \mathbb{E}_{\mathbb{Q}}\Bigg[\int_0^T (\zeta_s\mathbf{1}_{\{\inf_{m\geq N}|Z^m_s|>0\}}
\mathbf{1}_{\{Z_s\neq 0\}}-\hspace*{5cm} $$
\begin{equation}\label{meassol-ugly}\hspace*{3cm}-g(s,Y_s,Z_s)\mathbf{1}_{\{\inf_{m\geq
N}|Z^m_s|>0\}}\mathbf{1}_{\{Z_s\neq 0\}}) \lambda_s\,\mathrm{d}s\Bigg]=0 \end{equation}
for all bounded $\lambda$ such that
$\sup_n |g_n(\cdot,Y^n,Z^n)\mathbf{1}_{\{Z^n\neq 0\}}\mathbf{1}_{\{Z\neq
0\}} \lambda|\leq C_\lambda$. Now define $C>0$
$$ \lambda^C:=(\zeta-g(\cdot,Y,Z))\mathbf{1}_{\{|\zeta-g(\cdot,Y,Z)|\leq C\}}\mathbf{1}_{\{\sup_{n}
|g_n(\cdot,Y^n,Z^n)\mathbf{1}_{\{Z^n\neq 0\}}\mathbf{1}_{\{Z\neq 0\}} |\leq
C\}} $$ which is progressive, bounded and satisfies
$|g_n(\cdot,Y^n,Z^n)\mathbf{1}_{\{Z^n\neq 0\}}\mathbf{1}_{\{Z\neq 0\}}
\lambda^C|\leq C^2$. Inserting this into (\ref{meassol-ugly}) yields
$$ |\zeta-g(\cdot,Y,Z)|\mathbf{1}_{\{Z\neq 0\}} \mathbf{1}_{\{\inf_{m\geq N}|Z^m|>0\}}
\mathbf{1}_{\{|\zeta-g(\cdot,Y,Z)|\leq C\}}\cdot\qquad\qquad\qquad\qquad$$
$$\qquad\qquad\qquad\qquad\qquad\qquad\cdot\mathbf{1}_{\{\sup_{n}|g_n(\cdot,Y^n,Z^n)
\mathbf{1}_{\{Z^n\neq 0\}}\mathbf{1}_{\{Z\neq 0\}} |\leq C\}}=0. $$ If we
let $C,N\rightarrow\infty$, we finally obtain
$$\zeta\mathbf{1}_{\{Z\neq 0\}}=g(\cdot,Y,Z)\mathbf{1}_{\{Z\neq 0\}} \qquad\mathrm{d}\mathbb{P}\otimes\mathrm{d}t\textrm{ a.e.} $$
taking into account that $Z_s(\omega)\neq 0$ implies that $Z^n_s(\omega)$
will lie in a compact set not containing $0$ for $n$ large enough for a.a.
$(\omega,s)$ and then $g_n(s,Y^n_s,Z^n_s)(\omega)$ will be bounded and the
sequence $(Z^n_s(\omega))$ will be bounded away from $0$.

We have thus proven that $\mathbb{Q}=\mathcal{E}(\zeta\bullet W)_T\cdot\mathbb{P}$ is an a.-measure solution.
\end{proof}
\begin{remark}\label{meassol-resrem'1}
As in Theorem \ref{meassol-result} it follows that
$$\mathbb{E}\left[\left(\frac{\mathrm{d}\mathbb{Q}}{\mathrm{d}\mathbb{P}}\right)^p\right]\leq\sup_n\mathbb{E}\left[\left(\frac{\mathrm{d}\mathbb{Q}_n}{\mathrm{d}\mathbb{P}}\right)^p\right] \,,
\qquad
\mathbb{E}_{\mathbb{Q}}\left[\left(\frac{\mathrm{d}\mathbb{P}}{\mathrm{d}\mathbb{Q}}\right)^p\right]
\leq\sup_n\mathbb{E}_{\mathbb{Q}_n}\left[\left(\frac{\mathrm{d}\mathbb{P}}{\mathrm{d}\mathbb{Q}_n}
\right)^p\right]. $$
\end{remark}
\begin{remark}\label{meassol-resrem'2}
We have also shown
$\widetilde{Y}=\mathbb{E}_{\mathbb{Q}}[\xi|\mathcal{F}_\cdot]$
$\mathrm{d}\mathbb{P}\otimes\mathrm{d}t$-a.e.
\end{remark}
\begin{remark}\label{meassol-resrem'3}
In addition we have proven that each subsequence of $(Z_n)_{n\in\mathbb{N}}$ has a subsequence which converges to $Z$, $\mathrm{d}\mathbb{P}\otimes\mathrm{d}t$-a.e. (where $Z$ is already uniquely determined by the martingale part of $\widetilde{Y}$). In other words $(Z_n)_{n\in\mathbb{N}}$ converges to $Z$ in measure.
\end{remark}

\section{Existence Results}\label{meassol-existres}

Equipped with the stability properties of the preceding section, we can now
return to the question of existence of measure solutions, and thus to
solutions of the associated BSDE. Using regularization techniques we will
derive existence results in scenarios where the generator function $g$ is
continuous off the origin in $\mathbb{R}^d$ and roughly of at most linear
growth in $z$, i.e. the classical generator $f(\cdot, y,z) = z\cdot
g(\cdot, y,z), y\in\mathbb{R}, z\in\mathbb{R}^d$ is subquadratic in $z$. We
will therefore consider proper functions
$$ g: \Omega\times[0,T]\times\mathbb{R}\times\mathbb{R}^d\rightarrow\mathbb{R}^d $$
and $f(\cdot, y,z) = z\cdot g(\cdot, y,z), y\in\mathbb{R},
z\in\mathbb{R}^d$.

\begin{lemma}\label{meassol-boundbound'}
Let $\xi\in L^\infty(\mathcal{F}_T)$ and let $g$ be proper such that
\begin{description}
\item{i)} $g$ and $f$ are both bounded by some constant $M$,
\item{ii)} $g(\omega,s,\cdot)$ is continuous at all points $(y,z)$ such that $z\neq 0$ for a.a.
$(\omega,s)$,
\item{iii)} $f(\omega,s,\cdot)$ is uniformly continuous for a.a.
$(\omega,s)$.
\end{description}
Then there exists a measure solution of the BSDE given by $g$ and $\xi$.
\end{lemma}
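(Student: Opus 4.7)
The plan is to build an approximating sequence of classical (hence measure) solutions $(\mathbb{Q}_n)$ and invoke Theorem \ref{meassol-result'} to extract the desired measure solution as a weak limit. Since $\xi$ is already bounded, I would simply take $\xi_n := \xi$ for every $n$ and concentrate on regularizing $g$ so as to meet hypotheses (i)--(iv) of that theorem; the uniform control on the Girsanov densities will come from the bound $|g|\le M$ through the BMO property.

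For the regularization I would convolve $g$ with a smooth compactly supported mollifier $\rho_n$ acting on the $(y,z)$-variables,
$$g_n(\omega,s,y,z) := \int_{\mathbb{R}^{1+d}} g(\omega,s,y',z')\, \rho_n(y-y',z-z')\, \mathrm{d}y'\, \mathrm{d}z'.$$
This yields a proper $g_n$ that is $C^\infty$ in $(y,z)$, still bounded by $M$, and converges to $g$ uniformly on every compact $K$ with $K\cap\{z=0\}=\emptyset$ thanks to hypothesis (ii). The function $f_n(\cdot,y,z):=z\cdot g_n(\cdot,y,z)$ is smooth with $f_n(\cdot,y,0)=0$ and satisfies the linear growth $|f_n|\le M|z|$; classical BSDE theory (Lepeltier--San Mart\'in, or Pardoux--Peng after truncating $f_n$ in $|z|$ and passing to the limit) then furnishes a solution $(Y^n,Z^n)$ with $Y^n$ bounded. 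Setting $\zeta^n := g_n(\cdot,Y^n,Z^n)$, the bound $|\zeta^n|\le M$ makes $\zeta^n\bullet W$ a BMO martingale with norm at most $M\sqrt T$ uniformly in $n$, so $\mathcal{E}(\zeta^n\bullet W)$ is a true martingale, $\mathbb{Q}_n := \mathcal{E}(\zeta^n\bullet W)_T\cdot\mathbb{P}$ is a probability measure equivalent to $\mathbb{P}$, and Remark \ref{meassol-alm2} identifies $\mathbb{Q}_n$ as a genuine measure solution for $(g_n,\xi)$.

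The hypotheses of Theorem \ref{meassol-result'} are now within reach: condition (i) follows from the uniform BMO bound combined with the reverse H\"older inequality recalled in the appendix (yielding uniform $L^p$-bounds for some $p>1$ on both $R^n_T$ and $1/R^n_T$), condition (ii) is trivial with $\xi_n=\xi\in L^\infty$, and condition (iv) is built into the mollification. The critical condition is (iii), the convergence of $Y^n$ in $\mathcal{H}^q(\mathbb{R},\mathbb{P})$ and almost everywhere. Since $|Y^n|\le\|\xi\|_\infty$, uniform integrability is automatic, so it suffices to secure a.e.\ convergence. Following the recipe advertised after Theorem \ref{meassol-result}, I would refine the approximation so that $(f_n)$ is monotone---for example by combining the mollification with a vanishing additive perturbation $\pm\varepsilon_n\to 0$ arranged to force $f_{n+1}\ge f_n$ (or the reverse) while preserving $f_n(\cdot,y,0)=0$, the uniform bound on $g_n$, and uniform convergence on compacts off $\{z=0\}$. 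The uniform continuity of $f$ from hypothesis (iii) of the lemma is precisely what allows such a monotone Lipschitz approximation to be built. Once $f_{n+1}\ge f_n$ (say) holds, the comparison theorem for Lipschitz BSDEs gives monotonicity of the corresponding $Y^n$, and dominated convergence with envelope $\|\xi\|_\infty$ promotes it to convergence in $\mathcal{H}^q$.

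The main obstacle is therefore the simultaneous construction of approximating generating functions satisfying \emph{all} the conditions above---boundedness, smoothness, $f_n(\cdot,y,0)=0$, monotonicity, and local uniform convergence off $\{z=0\}$. Once this construction is in place and Theorem \ref{meassol-result'} delivers an a.-measure solution $\mathbb{Q}$ of the BSDE for $(g,\xi)$, I would upgrade it to a genuine measure solution by applying Remark \ref{meassol-alm2} with $\hat g := g$: the bound $|g|\le M$ ensures that $\mathcal{E}(g(\cdot,Y,Z)\bullet W)$ is a true martingale and that $\xi\in L^\infty$ remains integrable under the resulting probability, so the a.-measure solution is already a measure solution in the strict sense of the first definition.
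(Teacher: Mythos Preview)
Your overall plan coincides with the paper's: mollify $g$ in $(y,z)$, solve the resulting Lipschitz BSDEs to obtain measure solutions $\mathbb{Q}_n$, feed these into Theorem~\ref{meassol-result'}, and upgrade the a.-measure solution to a genuine one via Remark~\ref{meassol-alm2} using $|g|\le M$. You also correctly isolate condition~(iii) of Theorem~\ref{meassol-result'} as the only nontrivial hypothesis and correctly aim for monotonicity of $(Y^n)$ via comparison.

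The gap is precisely the one you flag yourself: the construction of $(g_n)$ that is simultaneously bounded, Lipschitz, of the form $f_n=z\cdot g_n$, convergent to $g$ off $\{z=0\}$, \emph{and} monotone enough for comparison. Your suggestion of a ``vanishing additive perturbation $\pm\varepsilon_n$'' does not work as stated: adding a constant (or any term not vanishing at $z=0$) to $\tilde f_{\varepsilon_n}$ destroys $f_n(\cdot,y,0)=0$, so $f_n$ can no longer be written as $z\cdot g_n$ with $g_n$ bounded, and you lose the uniform BMO control on $\zeta^n$ that drives condition~(i). Multiplicative fixes like $c_n|z|$ fail too, since the uniform error $\sup_{y,z}|\tilde f_{\varepsilon_n}-f|$ does not carry a factor $|z|$.

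The paper resolves this by perturbing in a different direction: it shifts the $y$-argument rather than the value. With $\delta^{(\varepsilon)}_s:=\sup_{y,z}|\tilde f_\varepsilon(s,y,z)-f(s,y,z)|$ (finite and $\to 0$ thanks to the uniform continuity of $f$) and a fast-decaying choice of $\varepsilon_n$, it sets
\[
g_n(s,y,z):=\tilde g_{\varepsilon_n}\Bigl(s,\,y+\textstyle\int_0^s\beta^{(n)}_r\,\mathrm dr,\,z\Bigr),\qquad \beta^{(n)}_s:=\sum_{k\ge 0}(k+1)\,\delta^{(\varepsilon_{n+k})}_s,
\]
so that $f_n=z\cdot g_n$ automatically holds with $|g_n|\le M$. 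The price is that one does \emph{not} get $f_n\le f_{n+1}$ pointwise but only the shifted inequality
\[
f_n(s,y,z)\le f_{n+1}\bigl(s,y+\textstyle\int_0^s\alpha^{(n)}_r\,\mathrm dr,z\bigr)+\alpha^{(n)}_s,
\]
and correspondingly the terminal condition is taken as $\xi_n:=\xi-\int_0^T\beta^{(n)}_s\,\mathrm ds$ (so your choice $\xi_n\equiv\xi$ would not suffice here). Comparison is then applied not to $(Y^n,Y^{n+1})$ directly but to the shifted process $X^n:=Y^n+\int_0^\cdot\alpha^{(n)}_r\,\mathrm dr$ against $Y^{n+1}$, yielding $X^n\le Y^{n+1}$ and hence $Y^n\le Y^{n+1}$. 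This is the missing idea in your proposal; once it is in place, the rest of your argument goes through exactly as you describe.
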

\begin{proof}
\textsc{Step1} We regularize the generator. To do this, for $\varepsilon>0,
y\in\mathbb{R}, z\in\mathbb{R}^d$ let
$$ \tilde{g}_\varepsilon(\cdot,y,z):=\int_{\mathbb{R}\times\mathbb{R}^d}
g(\cdot,y-\hat{y},z-\hat{z})\rho_\varepsilon(\hat{y},\hat{z})\,\mathrm{d}
(\hat{y},\hat{z}) = g\ast \rho_\varepsilon (\cdot,y,z) $$ with
$$ \rho_\varepsilon (\hat{y},\hat{z}):=\frac{1}{(2\pi\varepsilon)^\frac{d+1}{2}}\exp\left(-\frac{1}{2\varepsilon}\left(|\hat{y}|^2+|\hat{z}|^2\right)\right) $$
Now define for $y\in\mathbb{R}$ and $z\in\mathbb{R}^d$
$$\tilde{f}_\varepsilon(\cdot,y,z):=z\tilde{g}_\varepsilon(\cdot,y,z).$$ We
claim that $\lim_{\varepsilon\rightarrow
0}\tilde{f}_\varepsilon(\omega,s,\cdot,\cdot)=f$ uniformly for almost all
fixed $(\omega, s)$. Indeed, since $f(\omega,s,\cdot,\cdot)$ is uniformly
continuous and bounded, $\lim_{\varepsilon\rightarrow 0}(f\ast
\rho_\varepsilon)(\omega,s,\cdot,\cdot)=f(\omega,s,\cdot,\cdot)$ uniformly
and
$$ |f\ast \rho_\varepsilon(\cdot,y,z)-z\tilde{g}_\varepsilon(\cdot,y,z)|\leq
M \int_{\mathbb{R}\times\mathbb{R}^d} |\hat{z}|\rho_\varepsilon(\hat{y},
\hat{z})\,\mathrm{d} (\hat{y},\hat{z}) \rightarrow 0 \textrm{ as }
\varepsilon\rightarrow 0. $$
Hence $\lim_{\varepsilon\rightarrow 0}\tilde{f}_\varepsilon(\omega,s,\cdot,\cdot)=f(\omega,s,\cdot,\cdot)$ uniformly as well. \\

\textsc{Step2} We now construct Lipschitz continuous and uniformly bounded
approximating sequences $(g_n)_{n\in\mathbb{N}}$ resp.
$(f_n)_{n\in\mathbb{N}}$ of $g$ resp. $f$. To this end, for any
$\varepsilon>0, s\in[0,T], y\in\mathbb{R}, z\in\mathbb{R}^d$ let
$$ \delta^{(\varepsilon)}_s(\omega):=\sup_{y,z}|\tilde{f}_{\varepsilon}(\omega,s,y,z)-f(\omega,s,y,z)|. $$
We have $\lim_{\varepsilon\rightarrow 0}\delta^{(\varepsilon)}_s(\omega)=0$
for a.a. $(\omega,s)$ and, by dominated convergence, we also have
$$\lim_{\varepsilon\rightarrow
0}\mathbb{E}\left[\int_0^T\left(\delta^{(\varepsilon)}_s\right)^4\,\mathrm{d}
s\right]=0.$$
Thus, we can choose a sequence
$(\varepsilon_n)_{n\in\mathbb{N}}$ of positive numbers converging to zero
such that 
$$\mathbb{E}\left[\int_0^T\left(\delta^{(\varepsilon_n)}_s\right)^4\,\mathrm{d}
s\right]\leq 2^{-n}$$ and therefore
$$ \sum _{n=0}^\infty p(n)\mathbb{E}\left[\int_0^T\left(\delta^{(\varepsilon_n)}_s\right)^l\,\mathrm{d} s\right]<\infty $$
for all polynomials $p$ and $l\in\{1,2,3,4\}$. In particular $\sum_{n=1}^\infty p(n)\int_0^T \delta^{(\varepsilon_n)}_s\,\mathrm{d} s<\infty$ a.s. for all polynomials $p$. \\
Set for $n\in\mathbb{N}, (s,y,z)\in[0,T]\times\mathbb{R}\times\mathbb{R}^d$
$$g_n(s,y,z):=\tilde{g}_{\varepsilon_n}(s,y+\sum_{k=0}^\infty \int_0^s (k+1)
\delta^{(\varepsilon_{n+k})}_r\,\mathrm{d} r,z),$$ and accordingly
$$f_n(s,y,z):=\tilde{f}_{\varepsilon_n}(s,y+\sum_{k=0}^\infty \int_0^s
(k+1)\delta^{(\varepsilon_{n+k})}_r\,\mathrm{d} r,z).$$ Defining
$\alpha^{(n)}_s:=\sum_{k=0}^\infty \delta^{(\varepsilon_{n+k})}_s$ and
$\beta^{(n)}_s:=\sum_{k=0}^\infty
(k+1)\delta^{(\varepsilon_{n+k})}_s=\sum_{l=n}^\infty \alpha^{(l)}_s$ we
have
$$ f_n(s,y,z)=\tilde{f}_{\varepsilon_n}(s,y+\int_0^s \beta^{(n)}_r\,\mathrm{d} r,z)\leq
f(s,y+\int_0^s \beta^{(n)}_r\,\mathrm{d} r,z)+\delta^{(\varepsilon_{n})}_s\leq $$
$$ \leq\tilde{f}_{\varepsilon_{n+1}}(s,y+\int_0^s \beta^{(n)}_r\,\mathrm{d} r,z)+ \delta^{(\varepsilon_{n})}_s+\delta^{(\varepsilon_{n+1})}_s \leq $$
$$\leq f_{n+1}(s,y+\int_0^s \beta^{(n)}_r\,\mathrm{d} r-\int_0^s \beta^{(n+1)}_r\,\mathrm{d} r,z)+\sum_{k=0}^\infty \delta^{(\varepsilon_{n+k})}_s = $$
$$= f_{n+1}(s,y+\int_0^s \alpha^{(n)}_r\,\mathrm{d} r,z)+\alpha^{(n)}_s. $$
This means
$$ f_n(s,y,z)\leq f_{n+1}(s,y+\int_0^s \alpha^{(n)}_r\,\mathrm{d} r,z)+\alpha^{(n)}_s, $$
a form of monotonicity we will use later. We have also shown
$$ f_n(s,y,z)\leq f(s,y+\int_0^s \beta^{(n)}_r\,\mathrm{d} r,z)+\beta^{(n)}_s, \quad s\in[0,T], y\in\mathbb{R}, z\in\mathbb{R}^d. $$
Note in addition that $\mathbb{E}\left[\left(\int_0^T \beta^{(n)}_s\,\mathrm{d}
s\right)^4\right]<\infty$ for all $n$ since by H\"older's inequality
$$ \left(\int_0^T \beta^{(0)}_s\,\mathrm{d} s\right)^4\leq T^3 \int_0^T \left(\beta^{(0)}_s\right)^4\,\mathrm{d} s, $$
and again by H\"older's inequality
$$ \left(\sum_{k=0}^\infty (k+1)\delta^{(\varepsilon_{k})}_s\right)^4\leq
\left(\sum_{k=0}^\infty \left(\frac{1}{(k+1)^{3/2}}\right)^{4/3}\right)^3
\left(\sum_{k=0}^\infty \left((k+1)^\frac{3}{2}(k+1)\delta^{(\varepsilon_{k})}_s\right)^4\right)$$
and $\mathbb{E}\left[\int_0^T \sum_{n=0}^\infty (n+1)^{10}\left(\delta^{(\varepsilon_{n})}_s\right)^4\,\mathrm{d} s\right]<\infty$. \\
Since $\sum_{n=1}^\infty \int_0^T n\delta^{(\varepsilon_{n})}_s\,\mathrm{d} s<\infty$ a.s.
we have $\lim_{n\rightarrow\infty} \int_0^s \beta^{(n)}_r\,\mathrm{d} r=0$ for a.a. $(\omega, s)$. \\
Furthermore, since $g(\omega,s,\cdot,\cdot)$ is uniformly continuous on compact sets,
which are disjoint from 
$$\left\{\left(y,z\right)\,\bigg|\,y\in\mathbb{R},z=0\in\mathbb{R}^d\right\}$$
and $\tilde{g}_{\varepsilon}(\omega,s,\cdot,\cdot)$ converges for $\varepsilon\rightarrow 0$
uniformly to $g(\omega,s,\cdot,\cdot)$ on such compacts, $g_n(\omega,s,\cdot,\cdot)$ converges
uniformly to $g$ on such compacts as well (for a.a. $(\omega,s)$) as $n\to\infty$. \\
In addition, for $n\in\mathbb{N}$, $f_n$ is Lipschitz continuous in $(y,z)$
according to the definition of $\tilde{g}_\epsilon$ and
$\tilde{f}_\epsilon$. In fact, for $s\in[0,T], y\in\mathbb{R},
z\in\mathbb{R}^d$
$$ \tilde{f}_\varepsilon(s,y,z)= $$
$$ =\int_{\mathbb{R}\times\mathbb{R}^d} (z-\hat{z})g(s,y-\hat{y},z-\hat{z})\rho_\varepsilon(\hat{y},\hat{z})\,\mathrm{d} (\hat{y},\hat{z})+
\int_{\mathbb{R}\times\mathbb{R}^d} \hat{z}g(s,y-\hat{y},z-\hat{z})\rho_\varepsilon(\hat{y},\hat{z})\,\mathrm{d} (\hat{y},\hat{z})= $$
$$=\int_{\mathbb{R}\times\mathbb{R}^d} f(s,\hat{y},\hat{z})\rho_\epsilon(y-\hat{y},z-\hat{z})\,\mathrm{d} (\hat{y},\hat{z})+
\int_{\mathbb{R}\times\mathbb{R}^d} g(s,\hat{y},\hat{z})(z-\hat{z})\rho_\varepsilon(y-\hat{y},z-\hat{z})\,\mathrm{d} (\hat{y},\hat{z})= $$
$$ = f\ast \rho_\varepsilon (s,y,z)+(g\ast (z\rho_\varepsilon)) (s,y,z), $$
which means that $\tilde{f}_\varepsilon$ is differentiable with respect to to $(y,z)$ with uniformly bounded derivatives, since $f,g$ and the derivatives of $\rho_\varepsilon$ and $z\rho_\varepsilon$ are all uniformly bounded. Furthermore $\tilde{f}_\varepsilon$ and therefore $f_n$ are uniformly bounded. \\
Similarly $g_n$ is Lipschitz continuous and uniformly bounded for
$n\in\mathbb{N}$, because the $\tilde{g}_\varepsilon, \varepsilon>0,$
possess this property.

\textsc{Step3} We now apply Theorem \ref{meassol-result'} to show the existence of
some a.-measure solution $\mathbb{Q}$ of the BSDE given by $g$ and $\xi$.
Since for $n\in\mathbb{N}$ the approximations $f_n,g_n$ are Lipschitz
continuous there exists a measure solution $\mathbb{Q}_n$ of the BSDE given
by $g_n$ and $\xi-\int_0^T \beta^{(n)}_s\,\mathrm{d} s=:\xi_n$. This is guaranteed
by a slight extension (including $y$ as a variable in the generator) of the
relationship between classical and measure solution explained in
\cite{meassol-meas} in the case of Lipschitz continuous generator. We check the
validity of the hypotheses of Theorem \ref{meassol-result'}.

Condition \emph{i)}:

For $n\in\mathbb{N}$ define
$$ R_n:=\exp\left(\int_{0}^T g_n(s,Y^n_s,Z^n_s)\,\mathrm{d} W_s-\frac{1}{2}\int_{0}^T |g_n(s,Y^n_s,Z^n_s)|^2
\,\mathrm{d} s\right). $$ We have to show
$$ \sup_n\mathbb{E}[R_n^p]+\sup_n\mathbb{E}[R_n^{1-p}]<\infty.$$
But this holds for all $p>1$ as a consequence of the uniform boundedness of
$g_n$.

Condition \emph{ii)} holds for $q=2$ by dominated convergence.

Condition \emph{iii)}:

It is sufficient to show monotonicity of $(Y^n)_{n\in\mathbb{N}}$ and boundedness of
$\left(\|Y^n\|_{\mathcal{H}^2(\mathbb{R},\mathbb{P})}\right)_{n\in\mathbb{N}}$.
The latter follows from the boundedness of $\left(\|\xi_n\|_{L^2(\mathbb{R},\mathbb{P})}\right)_{n\in\mathbb{N}}$. We will now show montonicity using the comparison theorem: \\
For $n\in\mathbb{N}$ define $X^n:=Y^n+\int_0^\cdot \alpha^{(n)}_r\,\mathrm{d} r$.
Since
$$ Y^n=\xi_n+\int_\cdot^T f_n(r,Y^n_r,Z^n_r)\,\mathrm{d} r-\int_\cdot^T Z^n_r\,\mathrm{d} W_r, $$
we also have
$$ X^n=\xi_n+\int_0^T \alpha^{(n)}_r\,\mathrm{d} r+\int_\cdot^T \left(f_n(r,Y^n_r,Z^n_r)-\alpha^{(n)}_r\right)\,\mathrm{d} r-
\int_\cdot^T Z^n_r\,\mathrm{d} W_r. $$ Note $\xi_n+\int_0^T \alpha^{(n)}_r\,\mathrm{d}
r=\xi_{n+1}$ and for $r\in[0,T]$
$$f_n(r,Y^n_r,Z^n_r)-\alpha^{(n)}_r\leq f_{n+1}(r,Y^n_r+\int_0^r \alpha^{(n)}_v\,\mathrm{d} v,Z^n_r)=f_{n+1}(r,X^n_r,Z^n_r)$$
according to arguments in \textsc{Step2}. If we now consider the two
standard Lipschitz BSDE given by the generators defined by
$\varphi(r,y,z):=f_n(r,Y^n_r,Z^n_r)-\alpha^{(n)}_r$ and
$\psi(r,y,z)=f_{n+1}(r,y,z), r\in[0,T], y\in\mathbb{R}, z\in\mathbb{R}^d,$
and by the identical terminal condition $\xi_{n+1}$, we see that the first
one is solved (in the classical sense) by $(X^n,Z^n)$ and the second one by
$(Y^{n+1},Z^{n+1})$, and at the same time
$\varphi(r,X^n_r,Z^n_r)\leq\psi(r,X^n_r,Z^n_r), r\in[0,T]$. The comparison
theorem implies $X^n\leq Y^{n+1}$ a.e., and therefore $Y^n\leq Y^{n+1}$
a.e.

Condition \emph{iv)}: Its validity has been discussed in \textsc{Step2}.

Thus Theorem \ref{meassol-result'} is applicable, and there exists an a.-measure
solution of the BSDE given by $\xi$ and $g$. But since $g$ is bounded,
there must exist a measure solution $\check{\mathbb{Q}}$ with the same $Y$
and $Z$, according to Remark \ref{meassol-alm2}.
\end{proof}

\begin{corollary}\label{meassol-minisol}
The measure solution $\check{\mathbb{Q}}$ constructed above is minimal in
the following sense: for all classical solutions $(\tilde{Y},\tilde{Z})$ of
the BSDE corresponding to ($\tilde{f},\tilde{\xi}$) where $\tilde{f}$ is
any bounded generator such that $f\leq\tilde{f}$ and $\xi\leq\tilde{\xi}\in
L^2(\mathbb{P})$ we have $Y\leq\tilde{Y}$ a.e. with
$Y:=\mathbb{E}_{\check{\mathbb{Q}}}[\xi|\mathcal{F}_\cdot]=\lim_{n\rightarrow\infty}Y^n$
from the above construction.
\end{corollary}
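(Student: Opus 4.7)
The plan is to compare, for each $n$, the solution $(Y^n,Z^n)$ of the approximating BSDE with a suitable shift of the target solution $(\tilde Y,\tilde Z)$, and then pass to the limit $n\to\infty$ using the monotone convergence $Y^n\uparrow Y$ established in Step 3 of the proof of Lemma~\ref{meassol-boundbound'} together with the fact (also from Step~2 there) that $\int_0^{\cdot}\beta^{(n)}_r\,\mathrm{d}r\to 0$ a.e.

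For fixed $n$, define the shifted process
$$
\tilde X^{n}_t := \tilde Y_t - \int_0^t \beta^{(n)}_r\,\mathrm{d} r,\qquad t\in[0,T].
$$
A direct computation (using $\mathrm{d}\tilde Y_t=-\tilde f(t,\tilde Y_t,\tilde Z_t)\,\mathrm{d} t+\tilde Z_t\,\mathrm{d} W_t$) shows that $(\tilde X^{n},\tilde Z)$ solves the BSDE with terminal condition $\tilde\xi-\int_0^T\beta^{(n)}_r\,\mathrm{d} r$ and generator
$$
\tilde\psi^{n}(r,x,z) := \tilde f\bigl(r,\, x+\textstyle\int_0^r\beta^{(n)}_v\,\mathrm{d} v,\, z\bigr) + \beta^{(n)}_r.
$$
On the other hand, $(Y^n,Z^n)$ solves the BSDE with terminal condition $\xi_n=\xi-\int_0^T\beta^{(n)}_r\,\mathrm{d} r$ and the Lipschitz generator $f_n$.

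Next I verify the two ingredients needed for the comparison theorem. On terminal conditions, $\xi\le\tilde\xi$ yields $\xi_n\le\tilde X^{n}_T$. On generators, the inequality
$$
f_n(r,x,z)\le f\bigl(r,x+\textstyle\int_0^r\beta^{(n)}_v\,\mathrm{d} v,z\bigr)+\beta^{(n)}_r
$$
already established in Step~2 of Lemma~\ref{meassol-boundbound'}, combined with $f\le\tilde f$, gives $f_n(r,x,z)\le\tilde\psi^{n}(r,x,z)$ for all $(r,x,z)$. Since $f_n$ is Lipschitz, the standard comparison theorem for (at least one-sidedly Lipschitz) BSDEs applies and yields
$$
Y^n_t \le \tilde X^{n}_t = \tilde Y_t - \int_0^t \beta^{(n)}_r\,\mathrm{d} r \quad \text{a.e.}
$$
Finally, letting $n\to\infty$: by Step~3 of the lemma the sequence $Y^n$ is monotonically increasing with limit $Y=\mathbb{E}_{\check{\mathbb Q}}[\xi|\mathcal{F}_\cdot]$ (cf.\ Remark~\ref{meassol-resrem'2}), and $\int_0^t \beta^{(n)}_r\,\mathrm{d} r\to 0$ a.e. Passing to the limit in the above inequality gives $Y\le\tilde Y$ a.e., which is the desired minimality.

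The only non-routine point is the applicability of the BSDE comparison theorem to the pair $(f_n,\tilde\psi^{n})$: the generator $\tilde\psi^{n}$ need not be Lipschitz (only bounded), but one-sided Lipschitz continuity of $f_n$ suffices for the classical version used throughout this paper (the same version applied to prove monotonicity of $(Y^n)$ in Step~3 of Lemma~\ref{meassol-boundbound'}), so no further work is required.
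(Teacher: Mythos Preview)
Your proof is correct and follows essentially the same approach as the paper: shift $\tilde Y$ by $\int_0^\cdot\beta^{(n)}_r\,\mathrm{d}r$, apply the comparison theorem against the Lipschitz generator $f_n$ using the inequality $f_n(s,y,z)\le f(s,y+\int_0^s\beta^{(n)}_r\,\mathrm{d}r,z)+\beta^{(n)}_s$ from Step~2, and pass to the limit. The only cosmetic difference is that the paper freezes the comparison generator along the solution, writing $\varphi(r,y,z):=\tilde f(r,\tilde Y_r,\tilde Z_r)+\beta^{(n)}_r$ (independent of $(y,z)$, hence trivially Lipschitz), whereas you keep $\tilde\psi^n$ as a genuine function of $(x,z)$; since $f_n$ is Lipschitz either formulation suffices for the standard comparison theorem.
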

\begin{proof}
For $n\in\mathbb{N}$ define $W^n:=\tilde{Y}-\int_0^\cdot \beta^{(n)}_r \,\mathrm{d}
r$ and conclude that $(W^n,\tilde{Z})$ solves (in the classical sense) the
BSDE given by the generator
$\varphi(r,y,z):=\tilde{f}(r,\tilde{Y}_r,\tilde{Z}_r)+\beta^{(n)}_r,
r\in[0,T], y\in\mathbb{R}, z\in\mathbb{R}^d,$ and
$\tilde{\xi}-\int_0^T\beta^{(n)}_r\,\mathrm{d} r=:\tilde{\xi}_n$. We know for
$s\in[0,T], y\in\mathbb{R}, z\in\mathbb{R}^d$
$$ f_n(s,y,z)\leq f(s,y+\int_0^s \beta^{(n)}_r\,\mathrm{d} r,z)+\beta^{(n)}_s $$
and therefore for $r\in[0,T]$ $f_n(r,W^n_r,\tilde{Z}_r)\leq
f(r,\tilde{Y}_r,\tilde{Z}_r)+\beta^{(n)}_r\leq\varphi(r,W^n_r,\tilde{Z}_r)$.
Also note $\xi_n\leq\tilde{\xi}_n$. By the comparison theorem this implies
$Y^n\leq W^n$ a.e. This in turn entails the a.e. relation
$$Y=\lim_{n\rightarrow\infty}Y^n\leq\lim_{n\rightarrow\infty}W^n=\tilde{Y}.$$
\end{proof}

As a consequence of this, minimal solutions are unique.

\begin{corollary}\label{meassol-compa'} Minimal measure solutions according to Corollary \ref{meassol-minisol} are unique in the sense
that $(Y,Z)$ are uniquely determined. \\ Furthermore the following
comparison property holds. For bounded $\xi\leq\hat{\xi}$ and for
generators $f(\cdot, y,z)=z\cdot g(\cdot,y,z)$ and
$\hat{f}(\cdot,y,z)=z\cdot \hat{g}(\cdot,y,z), y\in\mathbb{R},
z\in\mathbb{R}^d$, such that the pairs $(g,f)$ and $(\hat{g},\hat{f})$ both
satisfy conditions \emph{i)}-\emph{iii)} of Theorem \ref{meassol-boundbound'} and
such that $f\leq \hat{f}$ pointwise, the corresponding minimal measure solutions
satisfy $Y\leq \hat{Y}$ a.e.
\end{corollary}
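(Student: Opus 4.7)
The plan is to obtain both parts of the corollary as immediate consequences of the minimality statement in Corollary \ref{meassol-minisol}, using that every measure solution yields a classical solution (as recorded in the first Remark after the definition of measure solution). There is no substantive new estimate to make; the only content is to package the previously established minimality into a symmetric comparison argument and then to read off uniqueness of $Z$ from uniqueness of $Y$.

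For uniqueness, suppose two minimal measure solutions exist for $(g,\xi)$, and denote their associated pairs by $(Y_1,Z_1)$ and $(Y_2,Z_2)$. By the first Remark after the definition of measure solution, each pair is a classical solution of the BSDE with generator $f$ and terminal condition $\xi$. Since $f$ is bounded (hypothesis \emph{i)} of Theorem \ref{meassol-boundbound'}) and $\xi\in L^\infty\subset L^2(\mathbb{P})$, Corollary \ref{meassol-minisol} applied to the minimal pair $(Y_1,Z_1)$ with $(\tilde f,\tilde\xi)=(f,\xi)$ and $(\tilde Y,\tilde Z)=(Y_2,Z_2)$ gives $Y_1\le Y_2$ a.e. By the symmetric application, $Y_2\le Y_1$ a.e., hence $Y_1=Y_2$ a.e. Because the drift $\int_0^\cdot f(s,Y_s,Z_s)\,\mathrm{d} s$ has finite variation, the local martingale part of the continuous semimartingale $Y$ is uniquely determined by $Y$; the martingale representation theorem with respect to $W$ then forces $Z_1=Z_2$ $\mathrm{d}\mathbb{P}\otimes\mathrm{d} t$-a.e.

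For the comparison property, write $(Y,Z)$ and $(\hat Y,\hat Z)$ for the pairs associated with the minimal measure solutions of $(g,\xi)$ and $(\hat g,\hat\xi)$, respectively. Again by the Remark, $(\hat Y,\hat Z)$ is a classical solution of the BSDE with generator $\hat f$ and terminal condition $\hat\xi$. The assumptions are that $\hat f$ is bounded (via condition \emph{i)} of Theorem \ref{meassol-boundbound'} applied to $(\hat g,\hat f)$), $\hat\xi$ is bounded and therefore in $L^2(\mathbb{P})$, $f\le\hat f$ pointwise, and $\xi\le\hat\xi$. These are exactly the hypotheses needed in Corollary \ref{meassol-minisol} to compare the minimal solution for $(g,\xi)$ against the classical solution $(\hat Y,\hat Z)$, and we conclude $Y\le\hat Y$ a.e.

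The argument is essentially bookkeeping, and the only step that is not a direct quotation of Corollary \ref{meassol-minisol} is the passage from $Y_1=Y_2$ to $Z_1=Z_2$. This is the main (and only modest) point to spell out carefully, but it follows from the uniqueness of the semimartingale decomposition of $Y$ together with Brownian martingale representation. Note in addition that once $(Y,Z)$ is determined, the corresponding measure itself is recovered via $\check{\mathbb{Q}}=\mathcal{E}(g(\cdot,Y,Z)\bullet W)_T\cdot\mathbb{P}$, so uniqueness extends to $\check{\mathbb{Q}}$ as well.
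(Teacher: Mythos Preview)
Your proof is correct and matches the paper's intended approach: the paper states this corollary without proof, prefacing it only with ``As a consequence of this, minimal solutions are unique,'' so it is treating both assertions as direct applications of Corollary~\ref{meassol-minisol}. Your symmetric invocation of that corollary for uniqueness, followed by reading off $Z$ from the unique semimartingale decomposition and martingale representation of $Y$, is exactly what is required, and your comparison argument likewise just feeds $(\hat f,\hat\xi,\hat Y,\hat Z)$ into Corollary~\ref{meassol-minisol} as the dominating classical solution.
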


In the following lemma, we obtain a stronger result than in Lemma
\ref{meassol-boundbound'}. In fact, we are able to drop the uniform continuity
condition \emph{iii)} of Lemma \ref{meassol-boundbound'}.

\begin{lemma}\label{meassol-boundbound}
Let $\xi\in L^\infty(\mathcal{F}_T)$ be bounded by some constant $K$. Let
$g$ be proper, let $g$ and $f(\cdot,y,z)=z\cdot g(\cdot,y,z),
y\in\mathbb{R}, z\in\mathbb{R}^d,$ be uniformly bounded and
let $g(\omega,s,\cdot,\cdot)$ be continuous at all points $(y,z)$ such that $z\neq 0$ for a.a. $(\omega,s)$. \\
Then there exists a measure solution of the BSDE given by $g$ and $\xi$.
\end{lemma}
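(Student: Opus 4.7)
The natural plan is to approximate the pair $(g, f)$ by pairs $(g_n, f_n)$ satisfying the full hypotheses of Lemma~\ref{meassol-boundbound'}, in particular uniform continuity of $f_n$, apply that lemma to produce measure solutions $\mathbb{Q}_n$, and extract a limit with Theorem~\ref{meassol-result'}. Since $g$ itself is bounded, Remark~\ref{meassol-alm2} will then upgrade the resulting a.-measure solution to a genuine measure solution.

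For the approximation I would take smooth nondecreasing cutoffs $\chi_n \in C^\infty(\mathbb{R}^d, [0,1])$ with $\chi_n \equiv 1$ on $\overline{B_n(0)}$ and $\chi_n \equiv 0$ off $B_{n+1}(0)$, set $\tau_n(y) := (-n) \vee y \wedge n$, and define $g_n(\cdot, y, z) := g(\cdot, \tau_n(y), z)\chi_n(z)$. Then $|g_n| \le \|g\|_\infty$, $|f_n| = |z \cdot g_n| \le \|f\|_\infty$, $g_n$ is continuous at every $(y, z)$ with $z \neq 0$, and $f_n(\cdot, y, z) = f(\cdot, \tau_n(y), z)\chi_n(z)$ is continuous on $\mathbb{R}\times\mathbb{R}^d$, constant in $y$ for $|y| > n$ and zero for $|z| \ge n+1$; this reduces the uniform continuity of $f_n$ to its continuity on the compact $[-n,n] \times \overline{B_{n+1}(0)}$, with matching values across the boundaries. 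Lemma~\ref{meassol-boundbound'} thus yields a minimal measure solution $\mathbb{Q}_n$ of the BSDE with $(g_n, \xi)$, and by construction $g_n \to g$ uniformly on every compact avoiding $\{z = 0\}$ (indeed $g_n \equiv g$ there for $n$ large enough).

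With $\xi_n := \xi$, the hypotheses (i), (ii), (iv) of Theorem~\ref{meassol-result'} are immediate: (i) follows from the uniform bound $|\zeta^n| = |g_n(\cdot, Y^n, Z^n)| \le \|g\|_\infty$, which gives uniform $L^p$-bounds on the stochastic exponentials $\mathcal{E}(\zeta^n \bullet W)_T$ and their reciprocals for every $p > 1$ via the reverse-H\"older estimates for bounded BMO integrands; (ii) is trivial since $\xi \in L^\infty$; (iv) was noted above. If these are combined with hypothesis (iii), Theorem~\ref{meassol-result'} produces an a.-measure solution $\mathbb{Q}$ of the BSDE given by $(g, \xi)$, and since $g$ is bounded the associated stochastic exponential is a true probability density making $\xi$ integrable, so Remark~\ref{meassol-alm2} promotes $\mathbb{Q}$ to a measure solution.

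The main obstacle is therefore hypothesis (iii) of Theorem~\ref{meassol-result'}, i.e.\ convergence of $Y^n$ in $\mathcal{H}^q(\mathbb{R}, \mathbb{P})$. Since $|Y^n_t| \le \|\xi\|_\infty$ uniformly, dominated convergence reduces this to pointwise a.e.\ convergence, which one obtains most cleanly from monotonicity of $(Y^n)$. The simple cutoff above does not by itself deliver monotonicity of $(f_n)$ (so Corollary~\ref{meassol-compa'} is not directly applicable), and this is the step that has to be handled carefully. I would imitate the shift-and-mollify construction of Lemma~\ref{meassol-boundbound'}: replace $f_n(s, y, z)$ by $f_n\!\left(s, y + \int_0^s \beta^{(n)}_r\,\mathrm{d} r, z\right)$ for a non-negative adapted process $\beta^{(n)}$ dominating the truncation error on the region that $(Y^n, Z^n)$ actually visits, and shift the terminal condition to $\xi - \int_0^T \beta^{(n)}_r\,\mathrm{d} r$, chosen so that the resulting sequence of minimal solutions satisfies $Y^n \le Y^{n+1}$ a.e.\ by Corollary~\ref{meassol-compa'} while still $\int_0^\cdot \beta^{(n)}_r\,\mathrm{d} r \to 0$. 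The technical point is that although $Y^n$ is uniformly bounded, $Z^n$ is not, so the control of $\beta^{(n)}$ has to be argued through the uniform boundedness of $g$ and $f$ rather than through a global uniform-continuity modulus as in Lemma~\ref{meassol-boundbound'}.
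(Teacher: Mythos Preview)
Your overall plan---approximate $(g,f)$ by $(g_n,f_n)$ satisfying the hypotheses of Lemma~\ref{meassol-boundbound'}, pass to the limit via Theorem~\ref{meassol-result'}, and upgrade to a measure solution using Remark~\ref{meassol-alm2}---matches the paper exactly, and your verification of hypotheses (i), (ii), (iv) is fine. The gap is precisely where you locate it: hypothesis (iii).

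The shift-and-mollify device from Lemma~\ref{meassol-boundbound'} does not transplant to your cutoff construction. In that lemma the crucial inequality $f_n(s,y,z)\le f_{n+1}\!\left(s,y+\int_0^s\alpha^{(n)}_r\,\mathrm{d} r,z\right)+\alpha^{(n)}_s$ holds for \emph{all} $(y,z)$ because $\delta^{(\varepsilon)}_s=\sup_{y,z}|\tilde f_\varepsilon-f|\to 0$; this uses the uniform continuity of $f$ that you are precisely trying to do without. With your cutoff, the defect $|f_n-f_{n+1}|$ can be of size $\|f\|_\infty$ on $\{|z|>n\}$, so any admissible $\alpha^{(n)}$ would have to be of order one on that set. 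You cannot choose $\alpha^{(n)}$ \emph{a priori} so that (a) it dominates this error along the trajectory of $Z^n$ and (b) $\int_0^T\alpha^{(n)}_r\,\mathrm{d} r\to 0$: the natural candidate $\alpha^{(n)}_s=2\|f\|_\infty\mathbf 1_{\{|Z^n_s|>n\}}$ depends on the very solution you are constructing, so the argument becomes circular. The remark that ``the control of $\beta^{(n)}$ has to be argued through the uniform boundedness of $g$ and $f$'' does not close this loop.

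The paper avoids the issue entirely by building the monotonicity into the approximation rather than recovering it afterwards. After first truncating $y$ to $[-K,K]$ (harmless since $|Y|\le K$), it defines
\[
f_n(\cdot,y,z):=\inf_{\hat y\in[-K,K],\,\hat z\in\mathbb{R}^d}\Bigl\{f(\cdot,\hat y,\hat z)+\tfrac{n}{0\vee(|z|-n)\wedge 1}\bigl|\textstyle\binom{y}{z}-\binom{\hat y}{\hat z}\bigr|\Bigr\},
\]
and $g_n:=\frac{z}{|z|^2}\mathbf 1_{\{z\neq 0\}}f_n$. By Lemma~\ref{meassol-uniformcontinuity} each $f_n$ is bounded by $\|f\|_\infty$, uniformly continuous, coincides with $f$ on $[-K,K]\times\overline{B_n(0)}$, and satisfies $f_n\le f_{n+1}\le f$ pointwise. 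The last property lets Corollary~\ref{meassol-compa'} give $Y^n\le Y^{n+1}$ directly, with no shifts or auxiliary processes; dominated convergence then yields (iii). The boundedness of $g_n$ follows because on $|z|\le 1$ one has $f_n=f$, so $|g_n|=|f|/|z|=|z\cdot g|/|z|\le|g|$. This inf-convolution step is the missing idea in your proposal.
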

\begin{proof}
For $s\in[0,T], y\in\mathbb{R}, z\in\mathbb{R}^d$ define $\tilde{f}(s,y,z):=f(s,(-K)\vee y\wedge K,z)$
and $\tilde{g}(s,y,z):=g(s,(-K)\vee y\wedge K,z)$. $\tilde{g}$ is proper and $g(\omega,s,\cdot,\cdot)$
is continuous at all points $(y,z)$ such that $z\neq 0$ for a.a. $(\omega,s)$. A measure solution of the BSDE
given by $\tilde{g}$ and $\xi$ will be a measure solution of the BSDE given by $g$ and $\xi$ as well,
since $\tilde{Y}$ is bounded by $K$. Hence w.l.o.g we can assume $f=\tilde{f}$ and $g=\tilde{g}$. \\
Approximate $f$, which is obviously continuous, from below by $f_n,
n\in\mathbb{N},$ which is uniformly bounded and uniformly continuous in
$(y,z)$ and such that $f_n(\cdot,y,z)=f(\cdot,y,z)$ for $|y|\leq K, |z|\leq 1,$
and such that $(f_n)_{n\in\mathbb{N}},$ is pointwise increasing and
converges to $f$ uniformly on compact sets $S$ for a.a. $(\omega,s)$. One
possible definition for $(f_n)_{n\in\mathbb{N}}$ is given by
\begin{equation}\label{meassol-infconvolution}
f_n(\cdot,y,z):=\inf_{\substack{\hat{z}\in\mathbb{R}^d\\\hat{y}\in[-K,K]}}
\left\{f(\cdot,\hat{y},\hat{z})+\frac{n}{0\vee(|z|-n)\wedge 1}
\left|{y\choose z}-{\hat{y}\choose\hat{z}}\right|\right\},\end{equation}
for all $z\in\mathbb{R}^d$, $y\in\mathbb{R}$, $n\in\mathbb{N}$,
where we use the convention $\frac{n}{0}\cdot 0:=0$ and $\frac{n}{0}\cdot r:=\infty$ for $r>0$. See  Lemma \ref{meassol-uniformcontinuity} for details.

Now define $g_n(\cdot,z):=\frac{z}{|z|^2}\mathbf{1}_{\{z\neq0\}}f_n(\cdot,z), z\in\mathbb{R}^d,
n\in\mathbb{N}$. We have $g_n(\cdot,0)=0$. \\
Note that $(g_n)_{n\in\mathbb{N}}$ is uniformly bounded. In fact, for $|z|\leq 1,
|y|\le K$ we have: $|g_n(\cdot,y,z)|=
\frac{1}{|z|}|f_n(\cdot,y,z)|=\frac{1}{|z|}|f(\cdot,y,z)|=\frac{1}{|z|}|z\cdot
g(\cdot,y,z)|\leq |g(\cdot,y,z)|$ which is bounded. Furthermore
$g_n(\omega,s,\cdot,\cdot)$ converges to
$\hat{g}(\omega,s,\cdot,\cdot):=\frac{z}{|z|^2}\mathbf{1}_{\{z\neq0\}}f(\omega,s,\cdot,\cdot)$
uniformly on compacts not containing points with vanishing $z$-part for
a.e. $(\omega, s)$, because of the convergence
of $f_n$ to $f$. \\
Considering the BSDEs given by $g_n$ and $\xi$ there exist, according to
Lemma \ref{meassol-boundbound'} corresponding (minimal) measure solutions
$\mathbb{Q}_n$. According to Corollary \ref{meassol-compa'} for the corresponding
$Y^n$ we have the inequality $Y^n\leq Y^{n+1}$ a.e.. Furthermore
$(Y^n)_{n\in\mathbb{N}}$ is uniformly bounded by $K$. Hence condition
\emph{iii)} of Theorem \ref{meassol-result} is satisfied. The remaining three are
easy to check. Theorem \ref{meassol-result} is therefore applicable, and there
exists an a.-measure solution of the BSDE given by $\xi$ and $\hat{g}$.
However, since $\hat{g}$ is bounded and for $z\in\mathbb{R}^d$ we have
$z\hat{g}(\cdot,z)=zg(\cdot,z)$, there must exist a measure solution of the
BSDE given by $\xi$ and $g$ as well, according to Remark \ref{meassol-alm2}.
\end{proof}

\begin{corollary}\label{meassol-compa} The solution $(\mathbb{Q},Y)$ constructed in Lemma \ref{meassol-boundbound}
is minimal in the following sense. For bounded $\check{\xi}\geq \xi$ and
$\check{f}(\cdot,z)=z\cdot \check{g}(\cdot,z), z\in\mathbb{R}^d$, such that
the pair $(\check{g},\check{f})$ satisfies the conditions of Lemma
\ref{meassol-boundbound} and such that $f\leq \check{f}$ pointwise, we have for a
measure solution $\check{\mathbb{Q}}$: $Y\leq \check{Y}$ a.e., where
$\check{Y}$ corresponds to $\check{\mathbb{Q}}$ and $\check{\xi}$ in the
sense of section \ref{meassol-prelim}.
\end{corollary}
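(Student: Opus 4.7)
The plan is to transport the minimality statement of Corollary \ref{meassol-minisol} along the approximating sequence used in the construction of $\mathbb{Q}$ in Lemma \ref{meassol-boundbound}, and then to pass to the limit. Recall from that proof that $(Y,Z)$ arose as the (monotone, a.e.) limit of a sequence $(Y^n, Z^n)$, where $(Y^n, Z^n)$ is the classical solution corresponding to the minimal measure solution $\mathbb{Q}_n$ of the BSDE driven by $(g_n, \xi)$, and where $f_n(\cdot, y, z) = z \cdot g_n(\cdot, y, z)$ is uniformly bounded, uniformly continuous in $(y,z)$, and dominated above by $f$ pointwise (the $f_n$ being built from $f$ via the inf-convolution (\ref{meassol-infconvolution}) precisely to possess these properties). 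By construction each pair $(g_n, f_n)$ then satisfies the hypotheses of Lemma \ref{meassol-boundbound'}, so Corollary \ref{meassol-minisol} is applicable to the BSDE $(g_n,\xi)$.

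Next I would fix the datum $(\check{g}, \check{\xi}, \check{\mathbb{Q}})$ from the statement. By the remark following the definition of measure solution, $(\check{Y}, \check{Z})$ is automatically a classical solution of the BSDE with generator $\check{f}(\cdot, y, z) = z \cdot \check{g}(\cdot, y, z)$ and terminal condition $\check{\xi}$. The hypothesis that $(\check{g}, \check{f})$ satisfies the conditions of Lemma \ref{meassol-boundbound} guarantees that $\check{f}$ is bounded, and the boundedness of $\check{\xi} \geq \xi$ places $\check{\xi}$ comfortably inside $L^2(\mathbb{P})$. Since in addition $f_n \leq f \leq \check{f}$ pointwise, Corollary \ref{meassol-minisol} applied to the BSDE driven by $(g_n, \xi)$, with comparison datum $(\tilde{f}, \tilde{\xi}) = (\check{f}, \check{\xi})$, yields $Y^n \leq \check{Y}$ a.e.\ for every $n \in \mathbb{N}$.

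Passing to the limit $n \to \infty$ in this pointwise inequality gives $Y = \lim_n Y^n \leq \check{Y}$ a.e., which is the claim. The proof is essentially a bookkeeping exercise; no step is genuinely hard, since the real work has already been done in Lemmas \ref{meassol-boundbound'} and \ref{meassol-boundbound} and in Corollary \ref{meassol-minisol}. The only point that requires a moment's care is the verification that each approximant $(g_n, f_n)$ really falls under the scope of Lemma \ref{meassol-boundbound'} (uniform boundedness, continuity of $g_n$ off $\{z = 0\}$, uniform continuity of $f_n$), which is exactly what was established when $(g_n, f_n)$ was set up in the proof of Lemma \ref{meassol-boundbound}.
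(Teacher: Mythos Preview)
Your proof is correct and follows essentially the same approach as the paper: use $f_n \le f \le \check f$ to compare each approximant $Y^n$ with $\check Y$, then pass to the limit. The only cosmetic difference is that you invoke Corollary~\ref{meassol-minisol} directly (which compares the minimal $Y^n$ against \emph{any} classical solution with a larger bounded generator and terminal datum), whereas the paper cites Corollary~\ref{meassol-compa'}; your choice is arguably the more precise reference, since $\check f$ is only assumed to meet the hypotheses of Lemma~\ref{meassol-boundbound}, not the uniform-continuity condition of Lemma~\ref{meassol-boundbound'} that Corollary~\ref{meassol-compa'} formally requires.
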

\begin{proof} This is easily shown by means of the approximating generators $f_n$
from the proof of Lemma \ref{meassol-boundbound} satisfying $f_n\leq \check{f},
n\in\mathbb{N}$. This implies $Y^n\leq\check{Y}$ by Corollary \ref{meassol-compa'}.
Therefore
\makebox{$Y=\lim_{n\rightarrow\infty}Y^n\leq\check{Y}$}.\end{proof}

In the following main existence theorem of this paper the statement of the
preceding lemma is extended to generating functions $g$ that are not
uniformly bounded. We are able to treat cases in which the upper bound on
$|g(s,\cdot,z)|$ is at most proportional to $|z|$ and provided by a
progressive process $\phi$ such that its stochastic integral with respect
to $W$ generates a BMO martingale. More prescisely, we have

\begin{theorem}\label{meassol-resunb}
Let $\xi\in L^\infty(\mathcal{F}_T)$. Let $g$ be proper, assume that
$g(\omega,s,\cdot,\cdot)$ is continuous at all points $(y,z)$ such that $z\neq
0$ for a.a. $(\omega,s)$, and $|g(s,\cdot,z)|\leq C(|z|+\phi_s),
s\in[0,T], z\in\mathbb{R}^d,$ with some progressive $\phi\in BMO(\mathbb{P})$ (see Appendix \ref{meassol-appe}) and a constant $C>0$. \\
Then there exists a measure solution of the BSDE given by $g$ and $\xi$.
\end{theorem}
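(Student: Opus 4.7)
The overall plan is to reduce the unbounded BMO setting to the bounded case already covered by Lemma \ref{meassol-boundbound}. I would construct a sequence of uniformly bounded generating functions $g_n$ that approximate $g$, solve the corresponding bounded BSDEs via Lemma \ref{meassol-boundbound}, and then pass to the limit using Theorem \ref{meassol-result} (or \ref{meassol-result'}). The BMO hypothesis on $\phi$ enters precisely to provide the uniform $L^p$-control of the Radon--Nikodym densities required in condition (i) of those theorems.

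For the construction, I would cut off $f(\cdot,y,z) = z\cdot g(\cdot,y,z)$ between $-n$ and $n$ in a way that is monotonically increasing in $n$, preserves continuity of $g$ off $\{z=0\}$, and converges pointwise to $f$ uniformly on compacts. Setting $g_n(\cdot,y,z) := z\,|z|^{-2} f_n(\cdot,y,z)\,\mathbf{1}_{\{z\neq 0\}}$ (extended by an arbitrary value at $z=0$) yields proper, bounded generating functions with the structural properties required by Lemma \ref{meassol-boundbound}. This produces minimal measure solutions $\mathbb{Q}_n$ with associated $(Y^n, Z^n)$. From the boundedness of $\xi$ and Corollary \ref{meassol-compa}, the $Y^n$ form a monotone sequence uniformly bounded by $\|\xi\|_\infty$.

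The crucial step is a uniform BMO estimate for $Z^n \bullet W$. For this I would apply It\^o's formula to $e^{\gamma Y^n_t}$ with $\gamma$ sufficiently large, following the standard Briand--Hu / Kobylanski exponential transform. Using $|Y^n| \leq \|\xi\|_\infty$, the quadratic term $\tfrac{1}{2}\gamma^2 e^{\gamma Y^n_s} |Z^n_s|^2$ coming from the It\^o correction will dominate the generator term $\gamma e^{\gamma Y^n_s} f_n(s,Y^n_s,Z^n_s)$, whose absolute value is bounded by $\gamma e^{\gamma Y^n_s}|Z^n_s|(C|Z^n_s|+C\phi_s)$; a Young-type inequality absorbs the $|Z^n|\phi$ cross term at the cost of a $\phi^2$-contribution. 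Taking conditional expectations over stopping times and using $\phi \in BMO$ yields
$$\sup_n \sup_\tau \left\|\mathbb{E}\!\left[\int_\tau^T |Z^n_s|^2\,\mathrm{d}s\,\bigg|\,\mathcal{F}_\tau\right]\right\|_\infty \leq C(\|\xi\|_\infty,\|\phi\|_{BMO}).$$
Then the growth bound $|g_n(s,\cdot,z)|\leq C(|z|+\phi_s)$ forces $\zeta^n := g_n(\cdot,Y^n,Z^n)$ to generate a uniformly BMO martingale as well. By the reverse H\"older inequality for exponentials of BMO martingales (see Appendix \ref{meassol-appe}), there exists $p>1$ such that both $\sup_n \mathbb{E}[(R^n_T)^p]<\infty$ and $\sup_n \mathbb{E}_{\mathbb{Q}_n}[(1/R^n_T)^p]<\infty$, which is precisely hypothesis (i) of Theorem \ref{meassol-result}.

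The remaining hypotheses are straightforward: (ii) holds trivially by choosing $\xi_n \equiv \xi$; (iii) follows from monotonicity of $Y^n$ and uniform boundedness by $\|\xi\|_\infty$ via dominated convergence in $\mathcal{H}^q$; (iv) is built into the construction of $g_n$ off $\{z=0\}$. Theorem \ref{meassol-result'} then delivers an a.-measure solution for $g$ and $\xi$, which by Remark \ref{meassol-alm2} upgrades to a genuine measure solution once we know that the candidate density $\mathcal{E}(\hat\zeta\bullet W)_T\cdot\mathbb{P}$ is a probability measure with $\xi$ integrable; this integrability in turn follows from the limiting BMO bound inherited in Remarks \ref{meassol-resrem'1}--\ref{meassol-resrem'3}. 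The principal obstacle is the uniform BMO estimate on $Z^n\bullet W$ under a random (rather than deterministic) growth bound on $g$: everything else in the proof is bookkeeping around the stability machinery of Section~\ref{meassol-stabres}.
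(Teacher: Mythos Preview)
Your overall strategy matches the paper's: approximate by bounded generating functions, invoke Lemma \ref{meassol-boundbound}, obtain uniform BMO estimates on $Z^n$ via the exponential transform (this is exactly Lemma \ref{meassol-BSDEBMO}), deduce condition (i) from Lemmas \ref{meassol-bmo1}--\ref{meassol-bmo2}, and pass to the limit through Theorem \ref{meassol-result'} before upgrading via Remark \ref{meassol-alm2}. However, the construction of the approximating $g_n$ contains a genuine gap that the paper has to work harder to close.

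A naive truncation $f_n=(-n)\vee f\wedge n$ does \emph{not} yield a bounded $g_n:=z|z|^{-2}f_n\mathbf{1}_{\{z\neq0\}}$: the growth hypothesis only gives $|g_n|\le C(|z|+\phi_s)$, and since $\phi$ is merely BMO---not bounded---$g_n$ is unbounded in $(\omega,s)$ and Lemma \ref{meassol-boundbound} does not apply. Moreover, a two-sided cut at $\pm n$ is neither increasing nor decreasing in $n$, so the monotonicity of $Y^n$ needed for hypothesis (iii) is not available either. The paper resolves both issues by truncating $|z|^{-1}f$ directly (which forces the associated $g$ to be bounded) and by using \emph{separate} indices for the lower and upper cuts:
\[
f_{nm}(\cdot,z)=(-n)\vee\Bigl(|z|\cdot\bigl((-n)\vee(|z|^{-1}f(\cdot,z))\wedge m\bigr)\Bigr)\wedge m,
\]
giving a double sequence increasing in $m$ and decreasing in $n$. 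Theorem \ref{meassol-result'} is then applied \emph{twice}---first $m\to\infty$ for fixed $n$, then $n\to\infty$---with the BMO bounds carried through each limit via Lemma \ref{meassol-bmolim} and Remarks \ref{meassol-resrem'1}--\ref{meassol-resrem'3}. Once this construction is in place, the rest of your outline is correct.
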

\begin{proof}
Note that $f(\omega,s,y,z):=z\cdot g(\omega,s,y,z)$ is continuous in
$(y,z)\in\mathbb{R}\times\mathbb{R}^d$ for a.a.
$(\omega,s)\in\Omega\times[0,T]$, where the continuity at points $(y,0)$
comes from $|f(s,y,z)|\leq C|z|(|z|+\phi_s)$.

For $z\in\mathbb{R}^d$ define
$\tilde{g}(\cdot,z):=\frac{z}{|z|^2}\mathbf{1}_{\{z\neq 0\}}f(\cdot,z)$
(where $\frac{z}{|z|^2}\mathbf{1}_{\{z\neq 0\}}$ is defined to be $0$ at
$z=0$). Note that $\tilde{g}$ also satifies $|\tilde{g}(s,\cdot,z)|\leq
C(|z|+\phi_s), s\in[0,T], z\in\mathbb{R}^d$.

For $n,m\in\mathbb{N}, s\in[0,T], y\in\mathbb{R}, z\in\mathbb{R}^d$ define
$$ f_{nm}(s,y,z):=(-n)\vee\left(|z|\cdot\,(-n)\vee\left(\frac{1}{|z|}f(s,y,z)\right)\wedge m\right)\wedge m, $$
$$ g_{nm}(s,y,z):=\frac{z}{|z|^2}\mathbf{1}_{\{z\neq 0\}}f_{nm}(s,y,z). $$
For all $n,m\in\mathbb{N}$, the functions $f_{nm}$ and $g_{nm}$ are
bounded. This follows from
$$ |g_{nm}(\cdot,z)|\leq\frac{1}{|z|}|f_{nm}(\cdot,z)|\leq \left|(-n)\vee\left(\frac{1}{|z|}f(\cdot,z)\right)
\wedge m\right|.$$
Furthermore
$$ f_{nm}(s,\cdot,z)\leq C(|z|^2+|z|\phi_s),\quad s\in[0,T], z\in\mathbb{R}^d. $$
The double sequence $(f_{nm})_{n,m\in\mathbb{N}}$ is pointwise
non-decreasing in $m$ for  fixed $n$, and non-increasing in $n$ for fixed
$m$. According to Lemma \ref{meassol-boundbound} and the associated corollary there
exist minimal measure solutions $\mathbb{Q}_{nm}$ of the BSDEs given by
$g_{nm}$ and $\xi$, $n,m\in\mathbb{N}$. The corresponding double sequence
$(Y^{nm})_{n,m\in\mathbb{N}}$ is non-decreasing in $m$ and non-increasing
in $n$ according to Corollary \ref{meassol-compa}. For $n\in\mathbb{N}$ define
$\widetilde{Y}^n:=\lim_{m\rightarrow\infty}Y^{nm}$. We next apply Theorem
\ref{meassol-result'} to show the existence of a.-measure solutions of the BSDE
given by $g_n:=\lim_{m\rightarrow\infty}g_{nm}$ and $\xi$. Note that $g_n$
is associated with
$$ f_{n}(\cdot,z):=(-n)\vee\left(|z|\cdot\,(-n)\vee\left(\frac{1}{|z|}f(\cdot,z)\right)\right), $$
$$ g_{n}(\cdot,z)=\frac{z}{|z|^2}\mathbf{1}_{\{z\neq 0\}}f_{n}(\cdot,z)\quad z\in\mathbb{R}^d. $$
Let us verify the hypotheses of Theorem \ref{meassol-result'}.

Condition \emph{i)}: According to Lemma \ref{meassol-BSDEBMO} applied to the BSDE
given by $f_{nm}$ and $\xi$, the control processes $Z^{nm}$ are in
$BMO(\mathbb{P})$ with a uniformly bounded BMO-norm, $n,m\in\mathbb{N}$.
Using Lemmas \ref{meassol-bmo1} and \ref{meassol-bmo2} there must exist a $p>1$ such that
\emph{i)} is satisfied.

Condition \emph{ii)} holds trivially.

Condition \emph{iii)} holds by dominated convergence and uniform
boundedness of $(Y^{nm})_{n,m\in\mathbb{N}}$.

Condition \emph{iv)} holds according to the construction of
$(g_{nm})_{n,m\in\mathbb{N}}$.

Theorem \ref{meassol-result'} provides a.-measure solutions $\mathbb{Q}_{n}$ of the
BSDE given by $g_n$ and $\xi$ and we have the associated solution pair
$(Y^n,Z^n)$, $n\in\mathbb{N}$. We have
$$ Y^{n}:=\lim_{m\rightarrow\infty}Y^{nm}=\mathbb{E}_{\mathbb{Q}_{n}}[\xi|\mathcal{F}_\cdot] $$
according to Remark \ref{meassol-resrem'2}. Furthermore according to Remark
\ref{meassol-resrem'3} for any $n\in\mathbb{N}$ $Z_n$ is in $BMO(\mathbb{P})$ with
a BMO-norm uniformly bounded in $n$, since $Z_n$ results from an a.e.-limit
of
processes with uniformly bounded BMO-norms (see Lemma \ref{meassol-bmolim}). \\
Note that $Y^{nm}\geq Y^{n'm}$ a.e. for all $m\in\mathbb{N}$ and $n<n'$
implies $Y^{n}\geq Y^{n'}$ a.e.. Thus $(Y^{n})_{n\in\mathbb{N}}$ converges
to some limit $\tilde{Y}$, and we can similarly apply Theorem \ref{meassol-result'}
to $(\mathbb{Q}_{n},g_{n},Y^n)_{n\in\mathbb{N}}$ to construct an a.-measure
solution of the BSDE given by $\tilde{g}$
and $\xi$. For its applicability we use Remark \ref{meassol-resrem'1}. \\
This means that there exists an a.-measure solution $\mathbb{Q}$ of the
BSDE given by $\tilde{g}$ and $\xi$. Using Remark \ref{meassol-resrem'3} as well as
Lemma \ref{meassol-bmolim}, we can assume that the
corresponding control process $\tilde{Z}$ is in $BMO(\mathbb{P})$. \\
Now define $\zeta:=g(\cdot,\tilde{Y},\tilde{Z})$. Then $\zeta\in
BMO(\mathbb{P})$ and hence $\mathbb{Q}:=\mathcal{E}(\zeta\bullet
W)_T\cdot\mathbb{P}$ is a probability measure. Thus it is a measure
solution of the BSDE given by $g$ and $\xi$ according to Remark \ref{meassol-alm2}.
\end{proof}

\begin{appendix}

\section{Appendix}\label{meassol-appe} We collect some well known facts and
extensions about BMO, exponential martingales and normed spaces.
The BMO prerequisites are needed for the derivation of our main existence
results for measure solutions in the preceding section, and underpin the
fact that this class of martingales plays a very important role in concepts
of control theory. The first topic we address is concerned with
integrability properties of the exponentials of BMO martingales.

\begin{lemma}\label{meassol-bmo1} For $K>0$ there exist real constants $r<0$ and $C>0$ such that
$$ \mathbb{E}[\mathcal{E}(M)^r_T]\leq C $$
for all BMO-martingales $M=Z\bullet W:=\int_0^\cdot Z_s\,\mathrm{d} W_s$ satisfying $\|Z\|_{BMO(\mathbb{P})}\leq K$.
\end{lemma}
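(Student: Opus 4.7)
The plan is to exploit the multiplicative decomposition
$$\mathcal{E}(M)_T^r = \mathcal{E}(rM)_T\cdot\exp\!\left(\tfrac{r(r-1)}{2}\langle M\rangle_T\right),$$
which follows directly from the formula $\mathcal{E}(N)_T=\exp(N_T-\tfrac12\langle N\rangle_T)$ applied to $N=rM$. For $r<0$ the prefactor $\tfrac{r(r-1)}{2}$ is strictly positive, so one factor is a nonnegative exponential local martingale and the other is the exponential of a nonnegative random variable. The first step of the proof is therefore to separate these two contributions and control each uniformly in $M$ subject only to $\|Z\|_{BMO}\leq K$.

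For the exponential martingale factor, I would use that $rM$ is again BMO with $\|rM\|_{BMO}\leq |r|K$. By Kazamaki's criterion this implies that $\mathcal{E}(rM)$ is a uniformly integrable martingale, so $\mathbb{Q}^r:=\mathcal{E}(rM)_T\cdot\mathbb{P}$ is a probability measure and
$$\mathbb{E}\bigl[\mathcal{E}(M)_T^r\bigr] = \mathbb{E}^{\mathbb{Q}^r}\!\left[\exp\!\left(\tfrac{r(r-1)}{2}\langle M\rangle_T\right)\right].$$
This reduces the whole problem to an exponential moment of $\langle M\rangle_T$ under $\mathbb{Q}^r$. Under $\mathbb{Q}^r$ the Girsanov transform $\tilde M:=M-r\langle M\rangle$ is a local martingale with $\langle\tilde M\rangle=\langle M\rangle$. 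The well-known stability of BMO under a BMO-driven change of measure (Kazamaki, Theorem~3.6) then gives $\|\tilde M\|_{BMO(\mathbb{Q}^r)}\leq\psi(K,|r|)$, where $\psi$ is an explicit continuous function that stays finite provided $|r|K$ lies below the threshold dictated by Kazamaki's reverse Hölder theorem.

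Once $\tilde M$ is shown to be BMO under $\mathbb{Q}^r$ with a controlled norm, I would apply the standard John--Nirenberg/energy inequality for BMO martingales: from the defining estimate $\mathbb{E}^{\mathbb{Q}^r}[(\langle\tilde M\rangle_T-\langle\tilde M\rangle_\tau)^n|\mathcal{F}_\tau]\leq n!\,\psi(K,|r|)^{2n}$, Taylor expansion of the exponential yields
$$\mathbb{E}^{\mathbb{Q}^r}\!\left[\exp\bigl(\lambda\langle M\rangle_T\bigr)\right]\leq\frac{1}{1-\lambda\,\psi(K,|r|)^2}$$
for every $\lambda<\psi(K,|r|)^{-2}$. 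The proof is then finished by choosing $r<0$ with $|r|$ small enough that (a) $|r|K$ is below Kazamaki's BMO-stability threshold, and (b) $\tfrac{r(r-1)}{2}<\psi(K,|r|)^{-2}$. Both conditions can be met simultaneously for $|r|$ sufficiently small since $\tfrac{r(r-1)}{2}\to 0$ as $r\to 0^-$ while $\psi(K,|r|)$ stays bounded; the resulting $C$ depends only on $K$.

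The main obstacle is the quantitative bookkeeping in the last step: one must verify that the admissible range for the energy inequality under the new measure $\mathbb{Q}^r$ is compatible with the range of $r$ for which BMO stability of the Girsanov transform holds. This is the technical heart of Kazamaki's theory, and once invoked it yields uniformity of the constant $C$ across all BMO martingales with norm bounded by $K$.
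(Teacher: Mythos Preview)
Your argument is correct, but it takes a noticeably heavier route than the paper. You absorb the factor $\mathcal{E}(rM)_T$ by \emph{changing measure} to $\mathbb{Q}^r=\mathcal{E}(rM)_T\cdot\mathbb{P}$, which then forces you to invoke BMO stability under Girsanov (Kazamaki, Theorem~3.6) so as to control $\langle M\rangle_T$ in the John--Nirenberg inequality under the new measure. The paper instead stays entirely under $\mathbb{P}$: after writing
\[
\mathcal{E}(M)_T^r=\exp\!\bigl(rM_T-\langle rM\rangle_T\bigr)\exp\!\bigl((r^2-\tfrac{r}{2})\langle M\rangle_T\bigr),
\]
it applies Cauchy--Schwarz, so that the first factor squares to $\mathcal{E}(2rM)_T$ with $\mathbb{E}[\mathcal{E}(2rM)_T]\le 1$, and the second reduces to $\mathbb{E}[\exp(\hat r\langle M\rangle_T)]$ with $\hat r=r(2r-1)>0$. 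One then only needs the energy inequality (Kazamaki, Theorem~2.2) under $\mathbb{P}$, and an explicit choice $r=\tfrac14-\sqrt{\tfrac1{16}+\tfrac1{4K^2}}$ gives $\hat r=\tfrac{1}{2K^2}$ and $C=\sqrt{2}$. Your approach yields the same conclusion but at the cost of an extra nontrivial ingredient (the quantitative BMO transfer across a Girsanov change); the paper's Cauchy--Schwarz trick avoids this entirely and gives clean explicit constants. One minor point: BMO stability under a BMO Girsanov shift holds without any threshold on $|r|K$, so your caveat there is unnecessary---condition~(b) alone suffices and is automatically satisfiable for small $|r|$.
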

\begin{proof}
Indeed, we may write for real $r$
$$ \mathbb{E}\left[\exp\left(rM_T-r\frac{1}{2}\langle
M\rangle_T\right)\right]= \mathbb{E}\left[\exp\left(rM_T-\langle
rM\rangle_T\right)\exp\left(\left(r^2-\frac{r}{2}\right)\langle
M\rangle_T\right)\right]\leq $$
$$ \leq \left(\mathbb{E}\left[\exp\left(2rM_T-\frac{1}{2}\langle 2rM\rangle_T\right)\right]\right)^\frac{1}{2}
\left(\mathbb{E}\left[\exp\left(r(2r-1)\langle
M\rangle_T\right)\right]\right)^\frac{1}{2}=$$
$$\hspace*{8cm}=\left(\mathbb{E}\left[\exp\left(\hat{r}\langle
M\rangle_T\right)\right]\right)^\frac{1}{2} $$
with $\hat{r}:=r(2r-1)$. Now define $r:=\frac{1}{4}-\sqrt{\frac{1}{16}+\frac{1}{4K^2}}<0$ and $C:=\sqrt{2}$. Then $\hat{r}=\frac{1}{2K^2}$. Setting $\tilde{Z}:=\sqrt{\hat{r}}Z$, we have $\|\tilde{Z}\|^2_{BMO(\mathbb{P})}\leq \frac{1}{2}$ if $\|Z\|_{BMO(\mathbb{P})}\leq K$. \\
Then according to Theorem 2.2. of \cite{meassol-kazam}
$$ \mathbb{E}\left[\exp\left(\hat{r}\int_0^T |Z_t|^2\,\mathrm{d} t\right)\right]=\mathbb{E}\left[\exp\left(\int_0^T |\tilde{Z}_t|^2\,\mathrm{d} t\right)\right]\leq\frac{1}{1-\|\tilde{Z}\|^2_{BMO(\mathbb{P})}}\leq 2=C^2. $$
\end{proof}

\begin{lemma}\label{meassol-bmo2} For $K>0$ there exist real $p>1$ and $C>0$ such that
$$ \mathbb{E}[\mathcal{E}(M)^p_T]\leq C $$
for all BMO-martingales $M=Z\bullet W$ satisfying $\|Z\|_{BMO(\mathbb{P})}\leq K$.
\end{lemma}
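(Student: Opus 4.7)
The plan is to follow the pattern of Lemma~\ref{meassol-bmo1}, but with an additional preliminary change of measure that converts the integrability problem on $\mathcal{E}(M)^{p}$ into an exponential moment of $\langle M\rangle$ under a new probability measure, where Theorem~2.2 of \cite{meassol-kazam} can again be applied. The starting point is the algebraic identity
\[
\mathcal{E}(M)_T^{p}\;=\;\mathcal{E}(pM)_T\,\exp\!\left(\tfrac{p(p-1)}{2}\,\langle M\rangle_T\right),
\]
obtained by direct expansion of the exponentials. Note that $pM=(pZ)\bullet W$ is itself a BMO-martingale with $\|pZ\|_{BMO(\mathbb{P})}\le pK$.

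By Kazamaki's theorem (same source \cite{meassol-kazam}), $\mathcal{E}(pM)$ is then a uniformly integrable martingale, so $\mathbb{Q}:=\mathcal{E}(pM)_T\cdot\mathbb{P}$ defines an equivalent probability measure, and the identity above yields
\[
\mathbb{E}\!\left[\mathcal{E}(M)_T^{p}\right]\;=\;\mathbb{E}_{\mathbb{Q}}\!\left[\exp\!\left(\tfrac{p(p-1)}{2}\,\langle M\rangle_T\right)\right].
\]
To bound the right-hand side I would invoke the stability of the BMO class under a $\mathcal{E}(pM)$-change of measure (a further classical result of Kazamaki in \cite{meassol-kazam}): the process $\widetilde{M}:=M-p\langle M\rangle$, which by Girsanov is a continuous $\mathbb{Q}$-martingale with $\langle\widetilde{M}\rangle=\langle M\rangle$, belongs to $BMO(\mathbb{Q})$ with a norm $K'=K'(K,p)$ depending only on $K$ and $p$. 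An application of Theorem~2.2 of \cite{meassol-kazam} under $\mathbb{Q}$, applied to $\widetilde{M}$ in place of $M$ exactly as in the proof of Lemma~\ref{meassol-bmo1}, then bounds $\mathbb{E}_{\mathbb{Q}}[\exp(\alpha\langle M\rangle_T)]$ whenever $\alpha K'(K,p)^{2}<1$.

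It remains to choose $p>1$ sufficiently close to $1$ that $\alpha:=\tfrac{p(p-1)}{2}$ satisfies this smallness condition. Since $\alpha\to 0$ as $p\downarrow 1$ while $K'(K,p)\to K$, such a $p$ exists and depends only on $K$, giving the desired uniform constant $C$. The main obstacle is the quantitative form of the BMO-stability step: one needs that $\|\widetilde{M}\|_{BMO(\mathbb{Q})}$ is controlled by $K$ and $p$ alone, uniformly over the whole class of $Z$ with $\|Z\|_{BMO(\mathbb{P})}\le K$, in order to secure that a single $p>1$ works for the entire class. This is precisely what the quantitative version of Kazamaki's measure-change stability theorem provides, and is where the essential work lies if one wishes a fully self-contained argument.
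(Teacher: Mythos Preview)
Your argument is correct in outline, but the paper takes a different and shorter route: it simply invokes Theorem~3.1 of \cite{meassol-kazam} (the reverse H\"older inequality for stochastic exponentials of BMO martingales), which already gives the explicit bound
\[
\mathbb{E}[\mathcal{E}(M)_T^{p}]\;\le\;\frac{2}{1-2(p-1)(2p-1)^{-1}\exp\!\bigl(p^{2}\|Z\|_{BMO}(2+\|Z\|_{BMO})\bigr)}
\]
whenever $\|Z\|_{BMO}<\Phi(p)$, and then uses $\Phi(p)\to\infty$ as $p\downarrow 1$ to pick $p$ with $K<\Phi(p)$. So the paper reduces everything to a single citation.

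Your approach instead re-derives a version of that reverse H\"older bound by the factorisation $\mathcal{E}(M)^{p}=\mathcal{E}(pM)\exp\!\bigl(\tfrac{p(p-1)}{2}\langle M\rangle\bigr)$, the change to $\mathbb{Q}=\mathcal{E}(pM)\cdot\mathbb{P}$, and then John--Nirenberg (Theorem~2.2 of \cite{meassol-kazam}) under $\mathbb{Q}$. This is more transparent about the mechanism, but it trades one black-box (Theorem~3.1) for another of comparable depth: the \emph{quantitative} BMO stability under Girsanov (Theorem~3.6 in \cite{meassol-kazam}), which you correctly identify as the crux. One small imprecision: you write $K'(K,p)\to K$ as $p\downarrow 1$; in fact the stability bound gives $K'(K,p)\le c(pK)\,K$ with $c(\cdot)$ the constant from Kazamaki's theorem, so the limit is $c(K)\,K$, not $K$. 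This does not matter for the argument, since all you need is that $K'(K,p)$ stays bounded near $p=1$ while $\alpha=\tfrac{p(p-1)}{2}\to 0$.
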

\begin{proof}
According to the proof of Theorem 3.1. of \cite{meassol-kazam} the inequality $\|Z\|_{BMO(\mathbb{P})}< \Phi(p)$ would imply
$$ \mathbb{E}[\mathcal{E}(M)^p_T]\leq \frac{2}{1-2(p-1)(2p-1)^{-1}\exp(p^2\|Z\|_{BMO(\mathbb{P})}(2+\|Z\|_{BMO(\mathbb{P})}))}<\infty $$
with
$$ \Phi(p):=\left(1+\frac{1}{p^2}\ln\frac{2p-1}{2(p-1)}\right)^\frac{1}{2}-1 $$
for $p>1$. \\
Note that $\|Z\|_{BMO(\mathbb{P})}< \Phi(p)$ also implies
$$ \|Z\|_{BMO(\mathbb{P})}(2+\|Z\|_{BMO(\mathbb{P})})=(\|Z\|_{BMO(\mathbb{P})}+1)^2-1<\frac{1}{p^2}\ln\frac{2p-1}{2(p-1)} $$
Since $\lim_{p\rightarrow 1}\Phi(p)=\infty$ we can choose $p>1$ such that
$K<\Phi(p)$. This proves the assertion.
\end{proof}

The following result extends Proposition 7.3 in \cite{meassol-prhedg}. It provides
conditions on the increase rate of the drift term of a stochastic equation
which give bounds on the BMO norm of its stochastic integral part.

\begin{lemma}\label{meassol-BSDEBMO} Let $Y$, $Z$, $X$, $\psi$, $\varphi$ be some progressive processes such that
$Y$ is bounded and
$$ Y_t=Y_T+\int_t^T X_s\,\mathrm{d} s-\int_t^T Z_s\,\mathrm{d} W_s \quad \textrm{a.s. }\forall t\in[0,T]. $$
Let $X\leq \psi^2+|Z|\varphi+C|Z|^2$ with some $C>0$ and $\varphi,\psi\in BMO(\mathbb{P})$.

Then there exists a constant $K$ which only depends on
$\|\varphi\|_{BMO(\mathbb{P})}\vee\|\psi\|_{BMO(\mathbb{P})}\vee C
\vee\|Y\|_\infty$ such that $\|Z\|_{BMO(\mathbb{P})}\leq K$.
\end{lemma}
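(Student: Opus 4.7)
My plan is to follow the classical Briand--Hu exponential transform trick. I would fix $\alpha>0$ to be chosen later (large compared with $C$) and apply It\^o's formula to $\Phi(Y_t):=e^{\alpha Y_t}$. Since $Y$ is bounded, $\Phi(Y)$ stays between $e^{-\alpha\|Y\|_\infty}$ and $e^{\alpha\|Y\|_\infty}$, and the It\^o expansion reads
$$ e^{\alpha Y_T}-e^{\alpha Y_\tau}=-\int_\tau^T\alpha e^{\alpha Y_s}X_s\,\mathrm{d}s+\int_\tau^T\alpha e^{\alpha Y_s}Z_s\,\mathrm{d}W_s+\frac{\alpha^2}{2}\int_\tau^T e^{\alpha Y_s}|Z_s|^2\,\mathrm{d}s $$
for any stopping time $\tau\leq T$. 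I localise with $\sigma_n:=\inf\{t:\int_0^t|Z_s|^2\,\mathrm{d}s\geq n\}\wedge T$ (note $\sigma_n\nearrow T$ since $\int_0^T|Z|^2\,\mathrm{d}s<\infty$ a.s.) so that the stochastic integral becomes a martingale, take conditional expectation $\mathbb{E}[\,\cdot\,|\mathcal{F}_{\tau\wedge\sigma_n}]$, then pass to the limit: Fatou on the $|Z_s|^2$ side and dominated convergence on the $e^{\alpha Y}$ side (this is possible because $e^{\alpha Y}$ is bounded and $|X_s|\leq\psi_s^2+|Z_s|\varphi_s+C|Z_s|^2$ is integrable on $[\tau,\sigma_n]$).

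After passing to the limit the stochastic integral vanishes and I obtain
$$ \frac{\alpha^2}{2}\,\mathbb{E}_\tau\!\left[\int_\tau^T e^{\alpha Y_s}|Z_s|^2\,\mathrm{d}s\right]=\mathbb{E}_\tau\!\left[e^{\alpha Y_T}-e^{\alpha Y_\tau}\right]+\alpha\,\mathbb{E}_\tau\!\left[\int_\tau^T e^{\alpha Y_s}X_s\,\mathrm{d}s\right]. $$
The first summand on the right is bounded by $2e^{\alpha\|Y\|_\infty}$. For the second I insert the hypothesis $X_s\leq\psi_s^2+|Z_s|\varphi_s+C|Z_s|^2$ and apply Young's inequality $|Z_s|\varphi_s\leq \tfrac12|Z_s|^2+\tfrac12\varphi_s^2$. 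Collecting the $|Z_s|^2$ contributions on the left gives
$$ \Bigl(\tfrac{\alpha^2}{2}-\alpha(C+\tfrac12)\Bigr)\mathbb{E}_\tau\!\left[\int_\tau^T e^{\alpha Y_s}|Z_s|^2\,\mathrm{d}s\right]\leq 2e^{\alpha\|Y\|_\infty}+\alpha\,\mathbb{E}_\tau\!\left[\int_\tau^T e^{\alpha Y_s}\bigl(\psi_s^2+\tfrac12\varphi_s^2\bigr)\mathrm{d}s\right]. $$

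Now I pick $\alpha$ strictly larger than $2C+1$ so the coefficient on the left is positive, and sandwich $e^{\alpha Y_s}$ between $e^{\pm\alpha\|Y\|_\infty}$. The right-hand side is then dominated by a constant plus $\alpha e^{\alpha\|Y\|_\infty}(\|\psi\|_{BMO}^2+\tfrac12\|\varphi\|_{BMO}^2)$ using the definition of the BMO norm. Dividing out yields
$$ \mathbb{E}_\tau\!\left[\int_\tau^T|Z_s|^2\,\mathrm{d}s\right]\leq K^2 $$
with $K$ depending only on $\alpha, C, \|Y\|_\infty, \|\psi\|_{BMO}, \|\varphi\|_{BMO}$, and since $\alpha$ itself depends only on $C$, the constant $K$ indeed depends only on the quantities listed in the statement. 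Taking essential supremum over stopping times $\tau$ gives $\|Z\|_{BMO(\mathbb{P})}\leq K$.

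The main obstacles are the localisation argument (making sure the stochastic integral vanishes in the limit without losing control of the $|Z|^2$ term on the left) and the algebraic balancing in the choice of $\alpha$: the parameter must be large enough to absorb both the $C|Z|^2$ coming from the hypothesis and the $\tfrac12|Z|^2$ generated by Young's inequality, while at the same time we cannot let $\alpha$ blow up the exponential prefactors too wildly. Once $\alpha=2C+2$ (or any similarly explicit choice) is fixed, everything else is a bookkeeping exercise.
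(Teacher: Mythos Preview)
Your proposal is correct and follows essentially the same exponential-transform argument as the paper: apply It\^o to $e^{\alpha Y}$, use Young's inequality on the cross term $|Z|\varphi$, choose $\alpha$ large enough relative to $C$ (the paper takes $\beta=2C+3$, you suggest $\alpha=2C+2$, both work), and read off the BMO bound from the $\psi,\varphi$ terms after sandwiching $e^{\alpha Y}$ between $e^{\pm\alpha\|Y\|_\infty}$. Your treatment of the localisation is in fact more careful than the paper's, which simply writes ``applying conditional expectations'' without making the stopping-time argument explicit.
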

\begin{proof}
By hypothesis
$$ X\leq\psi^2+|Z|\varphi+C|Z|^2\leq
(\psi^2+\frac{1}{2}\varphi^2)+(C+\frac{1}{2})|Z|^2. $$ Define
$\tilde{\psi}:=\sqrt{\psi^2+\frac{1}{2}\varphi^2}\in BMO(\mathbb{P})$ and
$\tilde{C}:=C+\frac{1}{2}$.

For $0\le t\le T$ we can write
$$ Y_t=Y_0-\int_0^t X_s\,\mathrm{d} s+\int_0^t Z_s\,\mathrm{d} W_s. $$
Let $\beta\in\mathbb{R}$. Using It\^{o}'s formula we have
$$ \exp(\beta Y_t)=\exp(\beta Y_0)-\int_0^t \beta\exp(\beta Y_s)X_s\,\mathrm{d} s+\hspace*{4cm}$$
$$\hspace*{4cm}+\int_0^t \beta\exp(\beta Y_s)Z_s\,\mathrm{d} W_s+
\frac{\beta^2}{2}\int_0^t \exp(\beta Y_s)|Z_s|^2\,\mathrm{d} s $$
or
$$ \exp(\beta Y_t)=\exp(\beta Y_T)+\int_t^T \beta\exp(\beta Y_s)X_s\,\mathrm{d} s-\hspace*{4cm}$$
$$\hspace*{4cm}-\int_t^T \beta\exp(\beta Y_s)Z_s\,\mathrm{d} W_s-
\frac{\beta^2}{2}\int_t^T \exp(\beta Y_s)|Z_s|^2\,\mathrm{d} s. $$ We obtain
$$ \beta\int_t^T \exp(\beta Y_s)\left(\frac{\beta}{2}|Z_s|^2-X_s\right)\,\mathrm{d} s=\exp(\beta Y_T)-
\exp(\beta Y_t)-\int_t^T \beta\exp(\beta Y_s)Z_s\,\mathrm{d} W_s, $$ and thus by
hypothesis
$$ \beta\int_t^T \exp(\beta Y_s)\left(\frac{\beta}{2}|Z_s|^2+\tilde{\psi}^2_s-X_s\right)\,\mathrm{d} s=$$
$$=\exp(\beta Y_T)-\exp(\beta Y_t)+\beta\int_t^T \exp(\beta Y_s)\tilde{\psi}^2_s\,\mathrm{d} s-\int_t^T
\beta\exp(\beta Y_s)Z_s\,\mathrm{d} W_s. $$ Setting $\beta:=2\tilde{C}+2=2C+3$ we
have
$$ |Z|^2\leq\frac{\beta}{2}|Z|^2+\tilde{\psi}^2-X. $$
Applying conditional expectations we get for $0\le t\le T$
$$ \mathbb{E}\left[\beta\int_t^T \exp(\beta Y_s)|Z_s|^2\,\mathrm{d} s\bigg|\mathcal{F}_t\right]\leq $$
$$\leq\mathbb{E}\left[\exp(\beta Y_T)-\exp(\beta Y_t)+\beta\int_t^T \exp(\beta Y_s)
(\psi^2+\frac{1}{2}\varphi^2)\,\mathrm{d} s\bigg|\mathcal{F}_t\right]. $$ Since
$\exp(\beta Y)$ is bounded from below and from above and since
$\psi,\varphi\in BMO(\mathbb{P})$ this shows 
$$\|Z\|_{BMO(\mathbb{P})}\leq
K<\infty,$$ with some $K$ which can be already determined by only knowing an
upper bound for $\beta$, $\|Y\|_\infty$, $\|\psi\|_{BMO(\mathbb{P})}$ and
$\|\varphi\|_{BMO(\mathbb{P})}$ and a lower bound for $\beta$ (which is
$3$).
\end{proof}

The following is a Fatou type property of BMO norms.

\begin{lemma}\label{meassol-bmolim}
Let $(Z^n)$ be a sequence of progressive processes converging to some
progressive $Z$ in measure (w.r.t. $\mathbb{P}\otimes\lambda_{[0,T]}$). \\
Then
$\|Z\|_{BMO(\mathbb{P})}\leq
\liminf_{n\to\infty}\|Z^n\|_{BMO(\mathbb{P})}$.
\end{lemma}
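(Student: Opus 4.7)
The plan is to work directly from the definition
$$\|Z\|_{BMO(\mathbb{P})}^2=\sup_{\tau}\left\|\mathbb{E}\left[\int_\tau^T|Z_s|^2\,\mathrm{d}s\,\Big|\,\mathcal{F}_\tau\right]\right\|_\infty,$$
where the supremum runs over all $[0,T]$-valued stopping times, and to apply conditional Fatou after extracting an a.e.\ convergent subsequence.

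Set $L:=\liminf_{n\to\infty}\|Z^n\|_{BMO(\mathbb{P})}$. If $L=\infty$ there is nothing to prove, so assume $L<\infty$. First I would choose a subsequence $(Z^{n_k})_{k\in\mathbb{N}}$ along which $\|Z^{n_k}\|_{BMO(\mathbb{P})}\to L$. Since $Z^n\to Z$ in measure with respect to $\mathbb{P}\otimes\lambda_{[0,T]}$, the same holds for every subsequence, so I can extract a further subsequence (still indexed by $k$ for convenience) such that $Z^{n_k}\to Z$ pointwise, $\mathbb{P}\otimes\lambda_{[0,T]}$-a.e.; equivalently, for $\mathbb{P}$-a.e.\ $\omega$ one has $Z^{n_k}_s(\omega)\to Z_s(\omega)$ for Lebesgue-a.e.\ $s\in[0,T]$.

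Now fix an arbitrary stopping time $\tau\le T$. By the ordinary Fatou lemma applied path-by-path,
$$\int_\tau^T|Z_s|^2\,\mathrm{d}s\;\le\;\liminf_{k\to\infty}\int_\tau^T|Z^{n_k}_s|^2\,\mathrm{d}s\qquad\mathbb{P}\text{-a.s.}$$
Taking conditional expectations and invoking conditional Fatou gives
$$\mathbb{E}\!\left[\int_\tau^T|Z_s|^2\,\mathrm{d}s\,\Big|\,\mathcal{F}_\tau\right]\;\le\;\liminf_{k\to\infty}\mathbb{E}\!\left[\int_\tau^T|Z^{n_k}_s|^2\,\mathrm{d}s\,\Big|\,\mathcal{F}_\tau\right]\qquad\mathbb{P}\text{-a.s.}$$
For each $k$ there is a $\mathbb{P}$-null set $N_k$ off which
$\mathbb{E}[\int_\tau^T|Z^{n_k}_s|^2\,\mathrm{d}s\mid\mathcal{F}_\tau]\le\|Z^{n_k}\|_{BMO(\mathbb{P})}^2$, and off the (still null) set $\bigcup_k N_k$ these pointwise inequalities hold simultaneously for all $k$. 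Taking $\liminf_k$ on both sides and using $\|Z^{n_k}\|_{BMO(\mathbb{P})}^2\to L^2$ then yields
$$\mathbb{E}\!\left[\int_\tau^T|Z_s|^2\,\mathrm{d}s\,\Big|\,\mathcal{F}_\tau\right]\le L^2\qquad\mathbb{P}\text{-a.s.}$$

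Since this bound holds almost surely for every stopping time $\tau$ and the bound $L^2$ is deterministic, taking essential suprema in $\omega$ and then the supremum over $\tau$ gives $\|Z\|_{BMO(\mathbb{P})}^2\le L^2$, i.e.\ $\|Z\|_{BMO(\mathbb{P})}\le\liminf_{n\to\infty}\|Z^n\|_{BMO(\mathbb{P})}$. The only delicate point is the double use of ``Fatou'': first pathwise in $s$ to pass from $|Z^{n_k}_s|^2$ to $|Z_s|^2$ inside the time integral, and then in conditional form under $\mathcal{F}_\tau$; once the a.e.\ convergent subsequence is in hand, nothing else is needed.
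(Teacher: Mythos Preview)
Your proof is correct and follows essentially the same approach as the paper: extract an a.e.\ convergent subsequence along which the BMO norms tend to the $\liminf$, then apply (conditional) Fatou to pass to the limit inside the conditional expectation. The paper phrases this as a proof by contradiction and uses deterministic times $t$ rather than stopping times in the BMO norm, but the underlying argument is identical; your two-step application of Fatou (pathwise in $s$, then conditionally) is simply a more explicit version of the single conditional Fatou step the paper invokes.
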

\begin{proof} Assume the contrary. Then there must exist a subsequence 
$(Z^{n_k})$ s.t. $$\|Z\|_{BMO(\mathbb{P})}>
\liminf_{k\to\infty}\|Z^{n_k}\|_{BMO(\mathbb{P})}$$ and $Z^{n_k}\rightarrow
Z$ a.e. for $k\to\infty$. However by Fatou's inequality we have for $0\le
t\le T$
$$ \mathbb{E}\left[\int_t^T|Z_s|^2\,\mathrm{d} s\bigg|\mathcal{F}_t\right] \leq\liminf_{k\to\infty}\mathbb{E}\left[\int_t^T|Z^{n_k}_s|^2\,\mathrm{d} s\bigg|\mathcal{F}_t\right]
\leq \liminf_{k\to\infty}\|Z^{n_k}\|_{BMO(\mathbb{P})} $$ almost surely.
This implies
$\|Z\|_{BMO(\mathbb{P})}\leq\liminf_{k\to\infty}\|Z^{n_k}\|_{BMO(\mathbb{P})}$,
yielding a contradiction.
\end{proof}

Next is the well-known representation property of exponential martingales on a Brownian basis.

\begin{lemma}\label{meassol-density}
Let $\mathbb{Q}\sim\mathbb{P}$ be a probability measure. Define
$R_T:=\frac{\mathrm{d}\mathbb{Q}}{\mathrm{d}\mathbb{P}}$ as the Radon-Nikodym
derivative. Then the martingale $R:=\mathbb{E}[R_T|\mathcal{F}_\cdot]$ can be
written as
$$ R=\exp\left(\int_0^\cdot\zeta_s\,\mathrm{d}W_s-\frac{1}{2}\int_0^\cdot|\zeta_s|^2\,\mathrm{d}s\right) $$
with some progressively measurable process $\zeta$ such that $\int_0^T|\zeta_s|^2\,\mathrm{d}s<\infty$ a.s.
\end{lemma}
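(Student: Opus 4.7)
The plan is to combine the martingale representation theorem with an application of It\^{o}'s formula to $\log R$. First I would observe that $R=\mathbb{E}[R_T\mid\mathcal{F}_\cdot]$ is a uniformly integrable $\mathbb{P}$-martingale with $R_0=\mathbb{E}[R_T]=1$. Because the filtration is the Brownian filtration $(\mathcal{F}^W_t)$, the martingale representation theorem produces a progressive process $\psi$ with $\int_0^T|\psi_s|^2\,\mathrm{d}s<\infty$ a.s. and a continuous version of $R$ satisfying
\[
R_t=1+\int_0^t\psi_s\,\mathrm{d}W_s,\qquad t\in[0,T].
\]

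The next step, which is the main obstacle, is to establish strict positivity of $R$ on $[0,T]$ so that $\log R$ makes sense. I would introduce the stopping time $\tau:=\inf\{t\in[0,T]:R_t=0\}$ (with $\inf\emptyset:=+\infty$). By continuity of $R$ we have $R_\tau=0$ on $\{\tau\leq T\}$, and since the stopped process $R^{\tau}$ is a non-negative continuous $\mathbb{P}$-martingale, optional stopping gives $R_t=0$ for all $t\geq\tau$ on $\{\tau\leq T\}$; in particular $R_T=0$ on this event. But $\mathbb{Q}\sim\mathbb{P}$ forces $R_T>0$ a.s., so $\tau>T$ a.s. Therefore $R_t>0$ for all $t\in[0,T]$ almost surely, and by continuity the pathwise infimum $\inf_{t\in[0,T]}R_t$ is strictly positive on a set of full measure.

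Once positivity is in hand, I would apply It\^{o}'s formula to $\log R$ and obtain
\[
\log R_t=\int_0^t\frac{\psi_s}{R_s}\,\mathrm{d}W_s-\frac{1}{2}\int_0^t\frac{|\psi_s|^2}{R_s^2}\,\mathrm{d}s,\qquad t\in[0,T].
\]
Setting $\zeta_s:=\psi_s/R_s$, exponentiation immediately yields the desired representation
\[
R=\exp\!\left(\int_0^\cdot\zeta_s\,\mathrm{d}W_s-\frac{1}{2}\int_0^\cdot|\zeta_s|^2\,\mathrm{d}s\right).
\]
Finally, the integrability condition $\int_0^T|\zeta_s|^2\,\mathrm{d}s<\infty$ a.s. follows pathwise: on the full-measure set where $R$ is continuous and strictly positive on the compact $[0,T]$, one has $\inf_{t\in[0,T]}R_t>0$, so $\int_0^T|\zeta_s|^2\,\mathrm{d}s\leq(\inf_{t\in[0,T]}R_t)^{-2}\int_0^T|\psi_s|^2\,\mathrm{d}s<\infty$, using the integrability of $\psi$ supplied by the representation theorem. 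The progressive measurability of $\zeta$ is inherited from that of $\psi$ and $R$.
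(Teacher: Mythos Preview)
Your proof is correct and follows essentially the same route as the paper. The paper's proof is a one-line appeal to Proposition~VIII.1.6 in Revuz--Yor together with the martingale representation theorem; what you have written is precisely the standard unpacking of that citation (representation of $R$ as a stochastic integral, strict positivity via the absorbing-at-zero property of nonnegative martingales, then It\^o on $\log R$ to identify $\zeta=\psi/R$).
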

\begin{proof} This is a consequence of Proposition 1.6 of Chapter VIII in \cite{meassol-revuzyor} and the martingale
representation theorem.
\end{proof}

We next recall a well known dual characterization of H\"older norms.

\begin{lemma}\label{meassol-lpnorm} Let $\mathbb{Q}$ be a probability measure on $(\Omega,\mathcal{F})$. Let $Y$
be non-negative and $\mathcal{F}$-measurable. Let $p,q>1$ such that
$\frac{1}{p}+\frac{1}{q}=1$. Denote $\mathcal{X}$ the set of all bounded
$\mathcal{F}$-measurable $X>0$. Then
$$ \mathbb{E}_\mathbb{Q}[Y^p]^\frac{1}{p}=\sup_{X\in\mathcal{X}}
\frac{\mathbb{E}_\mathbb{Q}[Y\cdot
X]}{\mathbb{E}_\mathbb{Q}[X^q]^\frac{1}{q}}. $$
\end{lemma}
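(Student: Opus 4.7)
The statement is the standard dual characterization of the $L^p$ norm, specialized to strictly positive bounded test functions. I would prove both inequalities separately, the ``$\geq$'' direction being the one that requires a small approximation argument because we are restricting the supremum to bounded strictly positive $X$.

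For the upper bound $\mathbb{E}_\mathbb{Q}[Y^p]^{1/p} \geq \sup_{X\in\mathcal{X}} \mathbb{E}_\mathbb{Q}[YX]/\mathbb{E}_\mathbb{Q}[X^q]^{1/q}$, I would simply apply H\"older's inequality with conjugate exponents $p,q$: for any $X \in \mathcal{X}$,
$$\mathbb{E}_\mathbb{Q}[YX] \leq \mathbb{E}_\mathbb{Q}[Y^p]^{1/p}\,\mathbb{E}_\mathbb{Q}[X^q]^{1/q}.$$
Dividing by $\mathbb{E}_\mathbb{Q}[X^q]^{1/q}$ (which is strictly positive since $X>0$) and taking the supremum over $X \in \mathcal{X}$ yields the inequality.

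For the reverse inequality, I would first assume $Y$ is bounded, so that $Y^{p-1} \in L^\infty$. The trick is to use the H\"older-saturating choice $X = Y^{p-1}$, perturbed to stay strictly positive: set $X_\varepsilon := Y^{p-1} + \varepsilon \in \mathcal{X}$ for $\varepsilon > 0$. Then $\mathbb{E}_\mathbb{Q}[Y X_\varepsilon] = \mathbb{E}_\mathbb{Q}[Y^p] + \varepsilon\,\mathbb{E}_\mathbb{Q}[Y]$ and, using $(p-1)q = p$, $\mathbb{E}_\mathbb{Q}[X_\varepsilon^q]^{1/q} \to \mathbb{E}_\mathbb{Q}[Y^{(p-1)q}]^{1/q} = \mathbb{E}_\mathbb{Q}[Y^p]^{1/q}$ as $\varepsilon \downarrow 0$ by dominated convergence. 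Hence
$$\frac{\mathbb{E}_\mathbb{Q}[Y X_\varepsilon]}{\mathbb{E}_\mathbb{Q}[X_\varepsilon^q]^{1/q}} \longrightarrow \frac{\mathbb{E}_\mathbb{Q}[Y^p]}{\mathbb{E}_\mathbb{Q}[Y^p]^{1/q}} = \mathbb{E}_\mathbb{Q}[Y^p]^{1-1/q} = \mathbb{E}_\mathbb{Q}[Y^p]^{1/p},$$
provided $\mathbb{E}_\mathbb{Q}[Y^p]>0$; if $\mathbb{E}_\mathbb{Q}[Y^p]=0$ then $Y=0$ a.s. and both sides of the claim vanish, so the identity holds trivially in that case.

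To pass from the bounded case to a general non-negative $Y$, I would apply the bounded case to the truncations $Y_k := Y \wedge k$. Since $Y_k \leq Y$, every admissible ratio for $Y_k$ is dominated by the corresponding ratio for $Y$, so
$$\mathbb{E}_\mathbb{Q}[Y_k^p]^{1/p} = \sup_{X\in\mathcal{X}} \frac{\mathbb{E}_\mathbb{Q}[Y_k X]}{\mathbb{E}_\mathbb{Q}[X^q]^{1/q}} \leq \sup_{X\in\mathcal{X}} \frac{\mathbb{E}_\mathbb{Q}[Y X]}{\mathbb{E}_\mathbb{Q}[X^q]^{1/q}}.$$
By monotone convergence $\mathbb{E}_\mathbb{Q}[Y_k^p]^{1/p} \uparrow \mathbb{E}_\mathbb{Q}[Y^p]^{1/p}$ as $k\to\infty$, which combined with the H\"older half of the proof yields the asserted equality in both the finite and the infinite norm case. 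The only subtle point is the harmless one of keeping the test functions strictly positive (handled by the additive $\varepsilon$); no other obstacles arise.
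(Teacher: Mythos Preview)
Your proof is correct and follows essentially the same approach as the paper's: H\"older's inequality for the upper bound, and the H\"older-saturating choice $Y^{p-1}$ (truncated and perturbed by a small constant to stay in $\mathcal{X}$) for the lower bound. The only organizational difference is that you split the lower bound into two stages (bounded $Y$ first via $X_\varepsilon = Y^{p-1}+\varepsilon$, then general $Y$ via truncation $Y\wedge k$), whereas the paper combines both into a single double sequence $X_{nm} = (\tfrac{1}{m}+Y\wedge n)^{p-1}$ and treats the case $\mathbb{E}_\mathbb{Q}[Y]=\infty$ separately by testing with $X=1$.
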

\begin{proof}
First use H\"older's inequality to get
$$ \frac{\mathbb{E}_\mathbb{Q}[Y\cdot X]}{\mathbb{E}_\mathbb{Q}[X^q]^\frac{1}{q}}\leq
\frac{(\mathbb{E}_\mathbb{Q}[Y^p])^\frac{1}{p}
(\mathbb{E}_\mathbb{Q}[X^q])^\frac{1}{q}}{\mathbb{E}_\mathbb{Q}[X^q]^\frac{1}{q}}.
$$ This implies
$$ \sup_{X\in\mathcal{X}}
\frac{\mathbb{E}_\mathbb{Q}[Y\cdot
X]}{\mathbb{E}_\mathbb{Q}[X^q]^\frac{1}{q}}\leq
\mathbb{E}_\mathbb{Q}[Y^p]^\frac{1}{p}. $$ Secondly, defining for
$n,m\in\mathbb{N}$ the bounded and positive $X_{nm}:=(\frac{1}{m}+Y\wedge
n)^{p-1}$, and assuming $0<\mathbb{E}_\mathbb{Q}[Y]<\infty$ for a moment,
we may write
$$ \sup_{X\in\mathcal{X}}\frac{\mathbb{E}_\mathbb{Q}[Y\cdot X]}{(\mathbb{E}_\mathbb{Q}[X^q])^\frac{1}{q}}\geq
\sup_{n,m}\frac{\mathbb{E}_\mathbb{Q}[Y\cdot X_{nm}]}{(\mathbb{E}_\mathbb{Q}[X_{nm}^q])^\frac{1}{q}}\geq
\lim_{n\rightarrow\infty}\lim_{m\rightarrow\infty}\frac{\mathbb{E}_\mathbb{Q}[(Y\wedge n)\cdot X_{nm}]}{(\mathbb{E}_\mathbb{Q}[X_{nm}^q])^\frac{1}{q}}= $$
$$ =\lim_{n\rightarrow\infty}\frac{\mathbb{E}_\mathbb{Q}[(Y\wedge n)\cdot (Y\wedge n)^{p-1}]}{(\mathbb{E}_\mathbb{Q}[(Y\wedge n)^{(p-1)q}])^\frac{1}{q}}=
\lim_{n\rightarrow\infty}(\mathbb{E}_\mathbb{Q}[(Y\wedge
n)^p])^\frac{1}{p}= (\mathbb{E}_\mathbb{Q}[Y^p])^\frac{1}{p}. $$
Here we use $\mathbb{E}_\mathbb{Q}[(Y\wedge n)^p]>0$ for $n$ large enough. \\
Now if $\mathbb{E}_\mathbb{Q}[Y]=0$ the claim is trivial, since this implies $Y=0$ $\mathbb{Q}$-a.s. \\
If $\mathbb{E}_\mathbb{Q}[Y]=\infty$ the claim becomes trivial as well,
since then $\mathbb{E}_\mathbb{Q}[Y^p]=\infty$ and we can set $X=1$.
\end{proof}

We finally show that the regularization of a continuous function $f$ by
(\ref{meassol-infconvolution}) indeed yields a sequence of uniformly continuous
functions converging monotonously to $f$. Denote by $\overline{A}$ the
closure of a set $A$ in a topological space.

\begin{lemma}\label{meassol-uniformcontinuity}
Let $R, M>0$, and let $f:[-R,R]\times \mathbb{R}^m \to
\mathbb{R}$ be bounded by $M$ and continuous. Define
$f_n:[-R,R]\times \mathbb{R}^m \to \mathbb{R}$ by
$$f_n(y,z) = \inf_{\hat z\in\mathbb{R}^m, \hat{y}\in [-R,R]}
\left\{ f(\hat{y}, \hat{z}) + \frac{n}{0\vee (|z|-n)\wedge 1}
\left|{y\choose z}-{\hat{y}\choose\hat{z}}\right|\right\},\hspace*{1cm}$$
$$\hspace*{8cm} z\in\mathbb{R}^d, y\in\mathbb{R}, n\in\mathbb{N},$$
with the conventions of (\ref{meassol-infconvolution}). Then we have
\begin{description}
\item[](i) $f_n$ is bounded by $M$ for any $n\in\mathbb{N}$,
\item[](ii) $f_n = f$ on $[-R,R]\times \overline{B_n(0)}$,
\item[](iii) $f_n$ is uniformly continuous for any $n\in\mathbb{N}$,
\item[](iv) $f_n\le f_{n+1}\le f$ for any $n\in\mathbb{N}$.
\end{description}
\end{lemma}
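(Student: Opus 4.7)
My plan is to verify (i), (ii), and (iv) by direct bookkeeping on the infimum formula and concentrate on (iii), which is the only nontrivial claim.

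For (i), plugging $(\hat{y},\hat{z}) = (y,z)$ into the bracket (legitimized by the convention $\tfrac{n}{0}\cdot 0 = 0$) gives $f_n(y,z) \le f(y,z) \le M$, while $f_n(y,z) \ge -M$ follows from $f \ge -M$ and non-negativity of the penalty. For (ii), when $|z| \le n$ the denominator $0\vee(|z|-n)\wedge 1$ vanishes, so the convention forces the bracket to equal $+\infty$ unless $(\hat{y},\hat{z}) = (y,z)$, reducing the infimum to $f(y,z)$. For (iv), writing $L_n(z) := n/(0\vee(|z|-n)\wedge 1)$, I would check by a direct case analysis in the strata $\{|z|\le n+1\}$, $\{n+1 < |z| \le n+2\}$, $\{|z| > n+2\}$ that $L_n(z) \le L_{n+1}(z)$ pointwise; since the infimum is monotone non-decreasing in the penalty coefficient, this yields $f_n \le f_{n+1}$, and $f_{n+1} \le f$ has already been observed in (i).

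For (iii), I would decompose the domain into the compact set $A := [-R,R]\times\overline{B_{n+1}(0)}$ and the outer region $B := [-R,R]\times\{|z|\ge n+1\}$. On $B$ the coefficient $L_n(z)$ is constantly equal to $n$, so $f_n|_B$ coincides with the classical inf-convolution $g(y,z) := \inf_{\hat{y},\hat{z}}\{f(\hat{y},\hat{z}) + n\,\|(y,z)-(\hat{y},\hat{z})\|\}$, which is globally $n$-Lipschitz on $[-R,R]\times\mathbb{R}^m$ by the standard triangle-inequality argument. Assuming joint continuity of $f_n$ on $A$ (treated below), Heine--Cantor gives a uniform modulus $\omega_A$ for $f_n|_A$. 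To stitch the two estimates, for $\|(y_1,z_1)-(y_2,z_2)\| < \delta < 1$ with one endpoint in each region, I would interpolate through a point $(y_*,z^*)$ lying on the segment between the endpoints with $|z^*| = n+1$; since $(y_*,z^*)\in A\cap B$ the two estimates combine to give $|f_n(y_1,z_1)-f_n(y_2,z_2)| \le \omega_A(\delta) + n\delta$, proving uniform continuity on the whole domain.

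The principal obstacle is verifying continuity of $f_n$ at the inner threshold $|z| = n$, where $L_n(z) \to +\infty$. The key mechanism is localization of near-infima: since $f\ge -M$ and the trial choice $(\hat{y},\hat{z})=(y,z)$ yields value $f(y,z)\le M$, any $\varepsilon$-suboptimizer must satisfy $L_n(z)\,\|(y,z)-(\hat{y},\hat{z})\| \le 2M + \varepsilon$, so as $L_n(z)\to\infty$ the near-optimizers concentrate at $(y,z)$ and continuity of $f$ gives $f_n(y,z)\to f(y,z)$. A parallel perturbation argument shows that the auxiliary function $h(L;y,z) := \inf_{\hat{y},\hat{z}}\{f(\hat{y},\hat{z}) + L\,\|(y,z)-(\hat{y},\hat{z})\|\}$ is continuous in $L \in (0,\infty]$ for each fixed $(y,z)$; combined with the $L$-Lipschitz continuity of $h(L;\cdot,\cdot)$ for fixed $L$ and with continuity of $z\mapsto L_n(z)$ into $(0,\infty]$, this yields the joint continuity of $f_n(y,z) = h(L_n(z);y,z)$ required above.
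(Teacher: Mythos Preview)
Your proposal is correct and follows essentially the same architecture as the paper's proof: split the domain into an outer region where $f_n$ is Lipschitz and a compact inner region where Heine--Cantor applies once continuity is known, and handle the delicate threshold $|z|=n$ by the localization-of-near-infima argument (any $\varepsilon$-suboptimizer lies in a ball of radius $(2M+\varepsilon)/L_n(z)$, which collapses as $L_n(z)\to\infty$).

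The only noteworthy difference is the placement of the cut. You split at $|z|=n+1$, exploiting that $L_n$ is \emph{constant} (equal to $n$) on the outer region, so $f_n$ there is the classical inf-convolution and is $n$-Lipschitz by the triangle inequality; in exchange you must separately argue joint continuity on the annulus $n<|z|<n+1$ via your auxiliary function $h(L;y,z)$. The paper instead cuts at $|z|=n+\epsilon$ for arbitrary $\epsilon>0$ and shows that on $\{|z|\ge n+\epsilon\}$ the penalty $(y,z)\mapsto \phi_n(|z|)\,\|(y,z)-(\hat y,\hat z)\|$ is Lipschitz with a constant independent of $(\hat y,\hat z)$ (using that $\phi_n$ is bounded and Lipschitz there), which absorbs the annulus into the Lipschitz region directly. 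Your route is a bit more elementary on the outer piece; the paper's route avoids the separate annulus argument. Your explicit stitching via an intermediate point on the sphere $|z|=n+1$ is a clean touch the paper leaves implicit. For (iv), the paper's one-line monotonicity argument ($|z|-n\ge |z|-(n+1)$ implies $L_n(z)\le L_{n+1}(z)$) is shorter than your case analysis but equivalent.
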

\begin{proof}
(i): This follows from the inequality
$$-M\le \inf_{\hat z\in\mathbb{R}^m, \hat{y}\in \overline{B_R(0)}}
\left\{ -M + \frac{n}{0\vee (|z|-n)\wedge 1} \left|{y\choose
z}-{\hat{y}\choose\hat{z}}\right|\right\} \le \hspace*{2cm}$$
$$\hspace*{7cm} \leq f_n(y,z) \le f(y,z)\le M,$$
valid for $y\in\mathbb{R}, z\in\mathbb{R}^m.$

(ii): Note that for $|z|\le n$ we have $\frac{n}{0\vee (|z|-n)\wedge 1} =
\infty$, and therefore for $|z|\le n, y\in\overline{B_R(0)}$
$$ f_n(y,z) = \inf_{\hat z\in\mathbb{R}^m, \hat{y}\in \overline{B_R(0)}}
\left\{ f(\hat{y}, \hat{z}) + \frac{n}{0\vee (|z|-n)\wedge 1}
\left|{y\choose z}-{\hat{y}\choose\hat{z}}\right|\right\} = f(y,z).$$

(iv): For $z\in\mathbb{R}^m, n\in\mathbb{N}$ we note $|z|-n\ge |z|-(n+1),$
and hence $$\frac{n}{0\vee (|z|-n)\wedge 1}\le \frac{n+1}{0\vee
(|z|-(n+1))\wedge 1}\le \infty.$$ This implies $f_n\le f_{n+1}\le f$.

(iii): For $n\in\mathbb{N}, t\ge 0$ denote $\phi_n(t) = \frac{n}{0\vee
(t-n)\wedge 1}$. Then
$$\phi_n(t) = \left\{
\begin{array}{ll}
\infty, & t\le n,\\
\frac{n}{t-n}, & n<t\le n+1,\\
n, & t>n+1,
\end{array}
\right.$$ and hence for any $\epsilon>0$ $\phi_n$ is bounded and Lipschitz
continuous on $[n+\epsilon, \infty).$ Hence for any $\epsilon>0$ and
$\hat{z}\in\mathbb{R}^m, \hat{y}\in\overline{B_R(0)}$ the mapping
$$A_{\hat{z},\hat{y}}: \overline{B_R(0)}\times(\mathbb{R}^m\setminus
B_{n+\epsilon}(0))\ni (y,z)\longmapsto \phi_n(|z|)\left|{y\choose
z}-{\hat{y}\choose\hat{z}}\right|$$ is Lipschitz continuous with Lipschitz
constant $L_\epsilon$ independent of $\hat{y}\in\overline{B_R(0)},
\hat{z}\in\mathbb{R}^m$. To abbreviate, let $M_\epsilon =
\overline{B_R(0)}\times(\mathbb{R}^m\setminus B_{n+\epsilon}(0))$. Then we
can write for $(y,z), (y', z')\in M_\epsilon, n\in\mathbb{N}$
$$f(\hat{y},\hat{z}) + \phi_n(|z|)\left|{y\choose
z}-{\hat{y}\choose\hat{z}}\right| \le f(\hat{y},\hat{z}) +
\phi_n(|z'|)\left|{y'\choose z'}-{\hat{y}\choose\hat{z}}\right| +
L_\epsilon \left|{y\choose z}-{y'\choose z'}\right|,$$ from which we deduce
by eventually switching the roles of $(y,z)$ and $(y',z')$
\begin{eqnarray*}
f_n(y,z) &\le& f_n(y',z') + L_\epsilon \left|{y\choose z}-{y'\choose
z'}\right|,\\
f_n(y',z') &\le& f_n(y,z) + L_\epsilon \left|{y\choose z}-{y'\choose
z'}\right|,
\end{eqnarray*}
and therefore
$$|f_n(y,z) - f_n(y',z')| \le L_\epsilon \left|{y\choose z}-{y'\choose
z'}\right|.$$ As a conclusion we obtain that for $\epsilon>0$ $f_n$ is
Lipschitz continuous on $M_\epsilon$, and therefore uniformly continuous.
As $(\overline{B_R(0)}\times \mathbb{R}^m) \setminus M_\epsilon$ is bounded,
and since continuous functions on compact sets are uniformly continuous, to
show (iii), it is therefore enough to prove that $f_n$ is continuous. By
construction this is clear for $(y,z)$ such that either $|z|<n$ or $|z|>n.$
Let therefore $(\tilde{y},
\tilde{z})\in [-R,R]\times\mathbb{R}^m$ such that $|\tilde{z}|=n$
be given. We have for $y\in [-R,R], z\in\mathbb{R}^m$
$$f_n(y,z) = \inf_{\hat{y}\in \overline{B_R(0)}, {\hat{y}\choose \hat{z}}\in
\overline{B_{\frac{2M}{\phi_n(|z|)}} ({y\choose z})}} \left\{ f(\hat{y},
\hat{z}) + \phi_n(|z|)\left|{y\choose
z}-{\hat{y}\choose\hat{z}}\right|\right\},$$ hence
$$\inf_{\hat{y}\in \overline{B_R(0)}, {\hat{y}\choose \hat{z}}\in \overline{B_{\frac{2M}{\phi_n(|z|)}}
({y\choose z})}} \left\{ f(\hat{y}, \hat{z})\right\} \le f_n(y,z),$$ and
therefore by $\phi_n(|z|)\to \infty$ for $z\to\tilde{z}$ and (ii)
\begin{eqnarray*}
\liminf_{(y,z)\to (\tilde{y},\tilde{z})} f_n(y,z) &\ge& \liminf_{(y,z)\to
(\tilde{y},\tilde{z})} \inf_{\hat{y}\in \overline{B_R(0)}, {\hat{y}\choose
\hat{z}}\in \overline{B_{\frac{2M}{\phi_n(|z|)}}
({y\choose z})}} \left\{ f(\hat{y},
\hat{z})\right\} = f(\tilde{y},\tilde{z}) = f_n(\tilde{y},\tilde{z}),\\
\limsup_{(y,z)\to (\tilde{y},\tilde{z})} f_n(y,z) &\le& \limsup_{(y,z)\to
(\tilde{y},\tilde{z})} f(y,z)= f(\tilde{y},\tilde{z}) = f_n(\tilde{y},\tilde{z}).
\end{eqnarray*}
This obviously implies the continuity of $f_n$ in $(\tilde{y},\tilde{z})$,
and concludes the proof of (iii). \\
\end{proof}

\end{appendix}


\begin{thebibliography}{99}


\addcontentsline{toc}{chapter}{Bibliography}


\bibitem{meassol-diff}
Ankirchner, S., Imkeller, P., dos Reis, G. (2007).
{\it Clacssical and variational differentiability of BSDEs with quadratic growth.}

Electon. J. Probab. 12, 1418-1453.



\bibitem{meassol-cross}
Ankirchner, S., Imkeller, P., dos Reis, G. (2010).
{\it Pricing and hedging of derivatives based on non-tradable underlyings.}

Mathematical Finance 20, 289-312.



\bibitem{meassol-meas}
Ankirchner, S., Imkeller, P. and Popier, A. (2007).
{\it On measure solutions of backward stochastic differential equations.}

Stoch. Proc. Appl. 119, 2744-2772.



\bibitem{meassol-bech}
Becherer, D. (2009).
{\it From bounds on optimal growth towards a theory of good-deal hedging.}

Radon Series on Computational and Applied Mathematics. De Gruyter, Berlin.



\bibitem{meassol-bismut}
Bismut, J.-M. (1976).
{\it Th\'eorie probabiliste du contr\^ole des diffusions.}

Mem. Amer. Soc., 4(167).



\bibitem{meassol-bismut2}
Bismut, J.-M. (1978).
{\it Contr\^ole des syst\`emes lin\'eaires quadratiques: applications de l'int\'egrale stochastique, S\'eminaire de Probabilit\'es, XII (Univ. Strasbourg, Strasbourg, 1976/1977)}

Lecture Notes in Math., vol. 649, Springer, Berlin, 1978, pp. 180-264.



\bibitem{meassol-schee}
De Scheemaekere, X. (2009). {\it Dynamic risk indifference pricing in incomplete markets.}

Quantitative Finance Papers 0909.3219, arXiv.org.



\bibitem{meassol-scan}
Detlefsen, K., Scandolo, G. (2005).
{\it Conditional and dynamic convex risk measures.}

Finance Stoch. 9, 539-561.



\bibitem{meassol-prhedg}
El Karoui, N., Barrieu, P. (2006).
{\it Pricing, Hedging and Optimally Designing Derivatives via Minimization of Risk Measures.}

R. Carmona (ed.) Volume on Indifference Pricing. Princeton University Press: 2006.



\bibitem{meassol-standard}
El Karoui, N., Peng, S. and Quenez, M. C. (1997).
{\it Backward stochastic differential equations in finance.}

Math.Finance 7, 1-71.



\bibitem{meassol-evsti}
Evstigneev, I. V. (1986).
{\it Measurable selection theorems and probabilistic control models in general topological spaces.}

Mat. Sb. (N.S.), 131(173):1(9) (1986), 27-39.



\bibitem{meassol-foell2}
F\"ollmer, H., Schied, A. (2002).
{\it Convex measures of risk and trading constraints.}

Finance Stoch. 6, 429-447.



\bibitem{meassol-foell}
F\"ollmer, H., Schied, A. (2004).
{\it Stochastic Finance: An Introduction in Discrete Time 2. ed.}

Number 27 in De Gruyter Studies in Mathematics.



\bibitem{meassol-imk}
Hu, Y., Imkeller, P., M\"uller, M. (2005).
{\it Utility maximization in incomplete markets.}

Ann. Appl. Probab. 15, 1691-1712.



\bibitem{meassol-kall}
Kallenberg, O. (2002).
{\it Foundations of Modern Probability.}

Springer-Verlag, New York, Berlin, Heidelberg, 2nd edition.



\bibitem{meassol-karat}
Karatzas, I. and Shreve, S. (1998).
{\it Methods of Mathematical Finance.}

Applications of Mathematics. 39 Springer, New York.



\bibitem{meassol-kazam}
Kazamaki, N. (1994).
{\it Continuous Exponential Martingales and BMO.}

Lecture Notes in Mathematics 1579. Springer, New York.



\bibitem{meassol-koby}
Kobylanski, M. (2000).
{\it BSDE and PDE with quadratic growth.}

The Annals of Probability 28(2), 558-602.



\bibitem{meassol-revuzyor}
Revuz, D., Yor, M. (1999).
{\it Continuous Martingales and Brownian Motion.}

Springer: Berlin.


\end{thebibliography}
\end{document}